\begin{document}

\title{A jump operator on the Weihrauch degrees}

\author[Andrews]{Uri Andrews}
\author[Lempp]{Steffen Lempp}
\author[Marcone]{Alberto Marcone}
\author[Miller]{Joseph S.~Miller}
\author[Valenti]{Manlio Valenti}

\address[Uri Andrews]{Department of Mathematics\\
University of Wisconsin - Madison\\
Madison, Wisconsin 53706\\
USA}
\email{\href{mailto:andrews@math.wisc.edu}{andrews@math.wisc.edu}}

\address[Steffen Lempp]{Department of Mathematics\\
University of Wisconsin - Madison\\
Madison, Wisconsin 53706\\
USA}
\email{\href{mailto:lempp@math.wisc.edu}{lempp@math.wisc.edu}}

\address[Alberto Marcone]%
  {Dipartimento di Scienze Matematiche, Informatiche e Fisiche\\
  University of Udine\\
  33100 Udine\\
  Italy}
\email{\href{mailto:alberto.marcone@uniud.it}{alberto.marcone@uniud.it}}

\address[Joseph S.\ Miller]{Department of Mathematics\\
University of Wisconsin - Madison\\
Madison, Wisconsin 53706\\
USA}
\email{\href{mailto:jmiller@math.wisc.edu}{jmiller@math.wisc.edu}}

\address[Manlio Valenti]{Department of Mathematics\\
University of Wisconsin - Madison\\
Madison, Wisconsin 53706\\ 
USA}
\curraddr{Department of Computer Science\\
Swansea University\\
Swan\-sea, SA1 8EN\\
UK}
\email{\href{mailto:manliovalenti@gmail.com}{manliovalenti@gmail.com}}

\thanks{The authors wish to thank Damir Dzhafarov, Jun Le Goh, Takayuki Kihara, Arno Pauly, and Dilip Raghavan for useful conversations on the topics of the paper. They also thank the anonymous reviewer for their careful reading of the paper. 
The second author's research was partially supported by AMS--Simons Foundation Collaboration Grant 626304. 
The third author was partially supported by the Italian PRIN 2017 \lq\lq Mathematical Logic: models, sets, computability\rq\rq, prot.\ 2017NWTM8R and by the Italian PRIN 2022 ``Models, sets and classifications'', prot.\ 2022TECZJA, funded by the European Union - Next Generation EU.
The fourth author was partially supported by NSF Grant No.\ DMS-2053848.}

\date{}

\subjclass{Primary 03D30; Secondary 03D78}
\keywords{Weihrauch degrees, jump operator, total continuation}
\maketitle

\begin{abstract}
A partial order $(P,\le)$ admits a jump operator if there is a map
$j\function{P}{P}$ that is strictly increasing and weakly monotone. Despite
its name, the jump in the Weihrauch lattice fails to satisfy both of these
properties: it is not degree-theoretic and there are functions~$f$ such
that $f\weiequiv f'$. This raises the question: is there a jump operator in the
Weihrauch lattice? We answer this question positively and provide an
explicit definition for an operator on partial multi-valued functions that,
when lifted to the Weihrauch degrees, induces a jump operator. This new
operator, called the \emph{totalizing jump}, can be characterized in terms of
the total continuation, a well-known operator on computational problems.
The totalizing jump induces an injective endomorphism of the
Weihrauch degrees. We study some algebraic properties of the totalizing
jump and characterize its behavior on some pivotal problems in the
Weihrauch lattice.
\end{abstract}
\tableofcontents

\section{Introduction}

Weihrauch reducibility is a notion of reducibility between computational
problems that calibrates uniform computational strength. Despite growing interest in the Weihrauch degrees, their underlying structure
remains relatively unexplored. Early work
showed that the Weihrauch degrees form a distributive lattice with a bottom
element; see \cite{BGP17} for an overview.

In the context of classical computability theory, a central role is played by
the Turing jump. It is therefore natural to ask whether there is an analogous
operation in the Weihrauch lattice. An answer to this question requires a
precise description of the desired properties of a  jump operator.

\begin{definition}[{\cite[Def.\ 1.1]{HS91}}]\thlabel{def:jump}
Let $(P, \le)$ be a partial order. A \textdef{jump operator} on~$P$ is a
function $j\function{P}{P}$ that is
\begin{enumerate}
\item
strictly increasing, i.e., for every $p\in P$, $p< j(p)$, and
\item
(weakly) monotone, i.e., for every $p,q\in P$, if $p\le q$ then
$j(p)\le
j(q)$.
\end{enumerate}
The structure $(P, \le, j)$ is called a \textdef{jump partial order}.
\end{definition}

This definition comes from Hinman and Slaman, who showed that every countable jump
partial order is embeddable in the Turing degrees \cite[Thm.\ 1.8]{HS91}. Later, Lerman 
\cite[Thm.\ 10.1.2]{Le10} extended this result to every countable jump
partial order with least element preserved under the embedding; and Montalb\'an
\cite[Thm.\ 4.17]{Mon03} extended this result by showing that every countable
jump upper semilattice can be embedded in the Turing degrees preserving also
the join operation.

Using the Axiom of Choice, it is not hard to show that every upper
semilattice without maximum $(P, \le, \oplus)$ admits a jump operator: given
a well-ordering $\sequence{p_\alpha}{\alpha}$ of~$P$, we can define
$j(p):=p\oplus p_\alpha$, where~$\alpha$ is least such that $p_\alpha \not
\le p$. It is straightforward to check that this map is indeed a jump
operator on $(P,\le, \oplus)$. However, this argument heavily uses the Axiom
of Choice, and most likely, the defined jump operation will not be
``natural''.

In the context of Weihrauch reducibility, Brattka, Gherardi and Marcone
\cite{BolWei11} defined the \emph{jump} of a partial multi-valued function
(see Section~\ref{sec:background} for the precise definition). While this
operator (that originally was also called the \emph{derivative}) has some
connections with the Turing jump, it fails to satisfy both properties~$(1)$
and~$(2)$ in the definition of a jump operator.

In this paper, we explicitly define a jump operator on computational problems
which we call the \textdef{totalizing jump}. We show that, while the explicit
definition may look technical, it has a natural connection with the
totalization operator~$\totalization{}$, a well-known operator on
computational problems.

After recalling the necessary background notions in
Section~\ref{sec:background}, we define and study the totalizing jump in
Section~\ref{sec:tot-jump}. In particular, we show that the degree of the
totalizing jump $\jjump{f}$ of a problem~$f$ is the maximum degree of
$\totalization{g}$ for $g\weiequiv f$ (\thref{thm:jump=max_tot}). We also
show that the map~$\jjumpop$ is injective on the Weihrauch degrees
(\thref{thm:jump_endomorphism}). This, in turn, implies that~$\jjumpop$ is an
injective (but not surjective) endomorphism of the Weihrauch degrees into
themselves. As a corollary, this induces two new embeddings of the Medvedev
degrees into the Weihrauch degrees. In Section~\ref{sec:jump_of_problems}, we
explicitly characterize the totalizing jump of specific well-known problems.
We make some remarks on abstract jump operators in Section~\ref{sec:conclusions}, and finally, in Section~\ref{sec:openQ}, we highlight some open problems.

\section{Background}\label{sec:background}

In this section, we provide a short introduction to the Weihrauch degrees, focusing on what will be needed in this paper. For a more thorough presentation, the reader is referred to \cite{BGP17}.

We let~$\Baire$ and~$\Cantor$ denote Baire and Cantor space, respectively.
Let~$\baire$ and~$\cantor$ denote the sets of finite strings of natural
numbers and of finite binary sequences, respectively. We write
$\str{x_0,\hdots, x_{n-1}}$ for the string $\sigma:=i\mapsto x_i$ of
length~$n$. The length of~$\sigma$ is denoted $\length{\sigma}$. If~$x$ is a
finite or infinite string, we write $x[n]$ for the prefix of~$x$ of
length~$n$. We use $\sigma\concat \tau$ to denote the concatenation
of~$\sigma$ and~$\tau$, and~$\prefix$ for the prefix relation.

We will use the symbol $\coding{\cdot}$ to denote a fixed computable
bijection $\function*{\baire}{\mathbb{N}}$. An
explicit definition for $\coding{\cdot}$ can be found in any basic textbook
on computability theory. We assume that this map has all the standard
computability-theoretic properties, e.g., that $\sigma \mapsto
\length{\sigma}$ is computable. For the sake of readability, we write
$\coding{n_0,\hdots, n_k}$ in place of $\coding{\str{n_0,\hdots, n_k}}$.

Often, the symbol $\coding{\cdot}$ is used to denote the \textdef{join}
between two (finite or infinite) strings with the same length. The meaning of
$\coding{\cdot}$ will be clear from the context. Moreover, if
$\sequence{x_i}{i\in\mathbb{N}}$ is a sequence of infinite strings, we define
$\coding{x_0,x_1,\hdots}(\coding{i,j}) := x_i(j)$.

We write $f\pmfunction{X}{Y}$ for a partial multi-valued function with domain contained in $X$ and codomain $Y$. For every $f,g \pmfunction{\Baire}{\Baire}$, we say that~$f$ is \textdef{Weihrauch
reducible} to~$g$, and write $f\weireducible g$, if there are two computable
functionals $\Phi\pfunction{\Baire}{\Baire}$ and $\Psi\pfunction{\Baire
\times \Baire}{\Baire}$ such that, for every $p\in\dom(f)$,
\begin{enumerate}
\item $\Phi(p)\in\dom(g)$, and
\item for every $q\in g\Phi(p)$, $\Psi(p,q)\in f(p)$.
\end{enumerate}
The functionals~$\Phi$ and~$\Psi$ are often called the \textdef{forward
functional} and the \textdef{backward functional}, respectively. Unless otherwise
mentioned, we will assume that~$\Phi$ is the name for the forward functional
and~$\Psi$ is the name for the backward functional.

If~$\Psi$ need not have access to the original input~$p$, we say that~$f$ is
\textdef{strongly Weihrauch reducible} to~$g$, and write
$f\strongweireducible g$. Formally, $f\strongweireducible g$ if there are two
computable functionals~$\Phi$ and~$\Psi$ such that, for every $p\in\dom(f)$,
\begin{enumerate}
\item $\Phi(p)\in\dom(g)$, and
\item for every $q\in g\Phi(p)$, $\Psi(q)\in f(p)$.
\end{enumerate}

Weihrauch reducibility is often formulated in the more general context of
partial multi-valued functions on represented spaces, also called
computational problems. However, if we are interested in the structure of the
degrees, there is no loss of generality in assuming that computational
problems have domain and codomain~$\Baire$. Indeed, for every computational
problem on represented spaces, there is a canonical choice for a Weihrauch
equivalent problem on the Baire space (see, e.g., \cite[Lemma 3.8]{BGP17}). With a small abuse of notation, we can consider problems with other domains and codomains (e.g., $\mathbb{N}$, $\cantor$, and $\baire$). They can be identified with problems on $\Baire$ using canonical representations (e.g., $n\in\mathbb{N}$ is represented by any $p\in\Baire$ with $p(0)=n$, and a tree is represented by its characteristic function). Unless otherwise mentioned, we will only consider problems on the Baire space.

We say that~$f$ is a \textdef{cylinder} if for all~$g$, $g \weireducible f$
if and only if $g \strongweireducible f$. The notion of cylinder is often
useful to prove separation results (as proving the non-existence of a strong
Weihrauch reduction can be easier).

As mentioned in the introduction, the Weihrauch degrees form a distributive
lattice, where join~$\sqcup$ and meet~$\sqcap$ are obtained by lifting the following operators to the degrees:
\begin{itemize}
\item $(f\sqcup g)(i,x):= \{i\}\times f(x)$ if $i=0$ and
$(f\sqcup g)(i,x):= \{i\}\times g(x)$ if $i=1$;
\item $(f\sqcap g)(x,z):= \{0\} \times f(x) \cup \{1\}\times g(z)$.
\end{itemize}

There is a natural bottom element, which is the (degree of the) empty
function. The existence of a top element is equivalent to the failure of some
relatively mild form of the Axiom of Choice (see \cite[\S2.1]{BP16}). In this
paper, we work in $\mathrm{ZFC}$, so we will assume that the Weihrauch
degrees do not have a maximum element.

There is a plethora of operators defined on computational problems, each of
which captures a specific (natural) way to combine or modify computational problems.
Most (but not all) of them lift to Weihrauch degrees. It is beyond the scope
of this paper to list them all; we will instead mention the ones that are
relevant to this work.

The \textdef{parallel product} $f \times g$ is defined as $(f \times g)(x,y)
:= f(x) \times g(y)$ and captures the idea of using~$f$ and~$g$ in parallel.
Its infinite generalization is called \textdef{parallelization}, and can be
defined as the problem
$\parallelization{f}:=\sequence{x_i}{i\in\mathbb{N}}\mapsto \{
\sequence{y_i}{i\in\mathbb{N}} \st (\forall i\in\mathbb{N})(y_i \in f(x_i))
\}$. In other words, given a countable sequence of $f$-instances,
$\parallelization{f}$ produces an $f$-solution for every $f$-instance.

To capture the idea of using~$f$ and~$g$ in series, we introduce the
\textdef{compositional product}: Let $\sequence{\Gamma_p}{p\in\Baire}$ be an
effective enumeration of all partial continuous functionals with $G_\delta$
domain. We define $f\compproduct g$ as the problem that takes as input an
element of the set
\[
\{(p,x)\in \Baire \times \Baire \st x \in \dom(g) \text{ and }
   (\forall q \in g(x))\;\Gamma_p(q)\in \dom(f) \},
\]
and produces a pair $(y,w)$ with $w\in g(x)$ and $y \in f(\Gamma_p(w))$.
Historically, the compositional product is defined as a map on a pair of
computational problems or Weihrauch degrees (see \cite{BolWei11}) that
corresponds to $\max_{\weireducible} \{ f_0\circ g_0 \st f_0 \weireducible f
\text{ and } g_0 \weireducible g\}$. However, it is convenient to fix a
specific representative of such degree (see \cite{Westrick20diamond} for a
short proof of the fact that $f\compproduct g$ as defined above works). Recalling that the compositional product is associative (\cite[Prop.\ 11.5.6]{BGP17}), we
denote by $f^{[n]}$ the $n$-fold compositional product of~$f$ with itself
(i.e., $f^{[1]}:=f$, $f^{[2]}:=f\compproduct f$, and so on).

All the operators mentioned so far are degree-theoretic. We now introduce a
few operators that, despite not being degree-theoretic, still play an
important role in the theory.

The \textdef{jump}~$f'$ of $f \pmfunction{X}{Y}$ is the problem that takes as
input a convergent sequence $\sequence{p_n}{n\in\mathbb{N}}$ in~$\Baire$ and
is defined as
\[
f'(\sequence{p_n}{n\in\mathbb{N}}):= f\left(\lim_{n\to\infty}p_n\right).
\]

Observe that, letting $\mflim \pfunction{(\Baire)^\mathbb{N}}{\Baire}$ be the
computational problem that computes the limit in the Baire space,
$f'\weireducible f\compproduct \mflim$. The converse reduction does not hold
in general (take, e.g., a function~$f$ that only has computable outputs).

As anticipated, this jump operation fails to be a jump in the abstract sense:
A simple counterexample is the constant function $c:=p\mapsto 0^\mathbb{N}$
that maps every $p\in\Baire$ to the constant~$0$ string. Indeed, given that
the input plays no role, it is apparent that $c'\weiequiv c$. This shows that
the operator~$'$ is not strictly increasing. At the same, letting~$\id$ be
the identity on the Baire space, we have
\[
c' \weiequiv c \weiequiv \id \strictlyweireducible \id' \weiequiv
   \mflim,
\]
where $\id'\weiequiv\mflim$ is straightforward from the definition (see also
\cite[Ex.\ 5.3(5)]{BolWei11}).

One may think that the constant function is a somewhat weird exception, but
this is not the case. For example, as mentioned,~$f'$ intuitively
corresponds to using~$\mflim$ once, and then applying~$f$ to the result. For
any computational problem strong enough to be closed under compositional
product with~$\mflim$, the jump is not strictly increasing.

As a side note, we mention that, even though the jump is not weakly monotone on
the Weihrauch degrees, it is weakly monotone on the strong Weihrauch degrees.
It still fails to be a jump, as it is not strictly increasing on the strong
Weihrauch degrees.

In the definition of the totalizing jump, a central role is played by the
\textdef{total continuation} or \textdef{totalization} operator. For every
partial multi-valued $f\pmfunction{\Baire}{\Baire}$, its totalization is the
total multi-valued function $\totalization{f}\mfunction{\Baire}{\Baire}$
defined as
\[
\totalization{f}(x):=\begin{cases}
	f(x)	& \text{if } x\in\dom(f),\\
	\Baire	& \text{otherwise.}
\end{cases}
\]
Again, the totalization is not a degree-theoretic operation: as a simple
counterexample, it is enough to consider a total computable function and a partial computable function with
no total computable extension.

We conclude this section by listing a few computational problems that will be
useful in the rest of the paper. We already introduced the identity
problem~$\id$ and the problem
$\mflim\pfunction{{(\Baire)}^\mathbb{N}}{\Baire}$ that maps a convergent
sequence in~$\Baire$ to its limit. It is well-known that $\mflim
\strongweiequiv \parallelization{\LPO}$, where $\LPO\function{\Baire}{2}$ is
defined as $\LPO(p):=1$ iff $(\exists n\in\mathbb{N})\; p(n)\neq 0$. It is
convenient to think of $\LPO(p)$ as the answer to a single $\Sigma^{0,p}_1$- (or $\Pi^{0,p}_1$-)question.

Some benchmark examples in the Weihrauch lattice are \emph{choice problems}. The
choice problem~$\Choice{X}$ can be intuitively described as the problem of
finding elements of non-empty subsets of~$X$ given an enumeration of the complement of the subset. 
Their formal definition is usually given in the more general context of represented spaces, but for the sake of this paper, we can define them in a (strongly Weihrauch) equivalent way as problems on Baire space as follows:
\begin{itemize}
\item
$\Choice{k}$: Given $p\in (k+1)^\mathbb{N}$ such that $(\exists
n<k)\; n+1\notin\ran(p)$, find $n<k$ such that $n+1\notin\ran(p)$.
\item
$\CNatural$: Given $p\in\Baire$ such that $(\exists
n)\; n+1\notin\ran(p)$, find~$n$ such that $n+1\notin\ran(p)$.
\item
$\CCantor$: Given (the characteristic function of) an infinite subtree of~$\cantor$, find a path
through it.
\item
$\CBaire$: Given (the characteristic function of) an ill-founded subtree of~$\baire$, find a path
through it.
\end{itemize}
The restrictions of the choice problems to instances with a unique solution
are denoted with the symbol $\UChoice{X}$. It is known that
$\UChoice{\mathbb{N}}\weiequiv \CNatural$ (\cite[Thm.\ 3.8]{BolWei11}),
$\UChoice{k} \weiequiv \UChoice{\Cantor} \weiequiv \id$ (where the second
equivalence follows from the fact that~$\Cantor$ is computably compact, see,
e.g., \cite[Cor.\ 6.4]{BdBPLow12}), and  $\UCBaire \strictlyweireducible
\CBaire$ (\cite[Cor.\ 3.7]{KMP20}).

\section{The totalizing jump}\label{sec:tot-jump}

We fix a computable enumeration $\sequence{\Phi_e}{e\in\mathbb{N}}$ of
partial computable functionals from~$\Baire$ to~$\Baire$. We now introduce the following new operator on computational problems:
\begin{definition}\thlabel{def:tot-jump}
Let $f\pmfunction{\Baire}{\Baire}$ be a partial multi-valued function. We
define the \textdef{totalizing jump} (or \textdef{tot-jump} for brevity)
of~$f$ as follows: For every $e,i\in\mathbb{N}$ and every $p\in\Baire$,
\[
\jjump{f}(e,i,p):=\begin{cases} \{ \Phi_i(p, q) \st q \in f\Phi_e(p) \}
&
        \text{if }\Phi_e(p) \in \dom(f) \text{ and}\\
        &\quad (\forall q\in f\Phi_e(p))\; \Phi_i(p,q)\downarrow, \\
	\Baire	&  \text{otherwise.}
	\end{cases}
\]
This definition can be extended to problems on arbitrary represented spaces as follows. Let $(X,\delta_X)$ and $(Y, \delta_Y)$ be represented spaces and let $g\pmfunction{X}{Y}$. The \textdef{realizer version} of $g$ is the problem $g^r\pmfunction{\Baire}{\Baire}$ defined as $g^r:= \delta_Y^{-1} \circ g\circ \delta_X$. Observe that $g^r=g$ whenever $X$ and $Y$ are subsets of $\Baire$ (represented via the identity function). We define $\jjump{g}:=\jjump{g^r}$.
\end{definition}

\begin{remark}
For some proofs, it may be convenient to use the following (strongly
Weihrauch equivalent) definition for the tot-jump: For every partial
multi-valued function $f\pmfunction{\Baire}{\Baire}$ and every
$x=\str{e,i}\concat p\in \Baire$, we define
\[
\inputjjump{f}(x):=\begin{cases} \{ \Phi_i(x, q) \st q \in f\Phi_e(x) \}
&
        \text{if }\Phi_e(x) \in \dom(f) \text{ and}\\
        &\quad (\forall q\in f\Phi_e(x))\; \Phi_i(x,q)\downarrow, \\
		\Baire	&  \text{otherwise.}
		\end{cases}
\]		

To show that $\inputjjump{f} \strongweiequiv \jjump{f}$, observe that the only difference between the two problems is that in $\inputjjump{f}$, the functionals~$\Phi_e$ and~$\Phi_i$ receive as input their own indices. In particular, to prove that $\inputjjump{f}\strongweireducible \jjump{f}$, it suffices to notice  that, given $e,i,p$, we can uniformly compute $e', i'\in \mathbb{N}$ so that $\Phi_{e'}(p) = \Phi_e(\str{e,i}\concat p)$ and $\Phi_{i'}(p,q)= \Phi_i(\str{e,i}\concat p, q)$. The other reduction is proved analogously.
\end{remark}

Intuitively, we can think of the tot-jump $\jjump{f}$ of~$f$ as a problem
that ``collects all possible Weihrauch reductions to~$f$ and totalizes''. In
particular, the name ``totalizing jump'' is motivated by the following characterization
of the Weihrauch degree of~$\jjump{f}$.

\begin{theorem}\thlabel{thm:jump=max_tot}
   For every $f,g\pmfunction{\Baire}{\Baire}$, 
   \begin{itemize}
      \item if $g\weireducible f$, then $\totalization{g} \weireducible\jjump{f}$;
      \item there is $h\pmfunction{\Baire}{\Baire}$ such that $h\weiequiv f$ and $\jjump{f} \weiequiv \totalization{h}$.
   \end{itemize}
   In other words, the Weihrauch degree of $\jjump{f}$ is the maximum of the Weihrauch degrees of the totalizations of the $g$'s which are Weihrauch equivalent (equivalently, reducible) to $f$.
\end{theorem}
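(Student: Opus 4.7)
The plan is to handle the two bullets in order; morally, $\jjump{f}$ is the totalization of the ``universal problem that $f$ solves via a prescribed reduction'', and both bullets just make this intuition precise.

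For the first bullet, suppose $g \weireducible f$ via forward functional $\Phi_{\bar e}$ and backward functional $\Phi_{\bar i}$. I would define the reduction $\totalization{g} \strongweireducible \jjump{f}$ by the forward functional $p \mapsto (\bar e, \bar i, p)$ and by the projection $r \mapsto r$ as backward functional. If $p \in \dom(g)$, the reduction hypothesis forces $\Phi_{\bar e}(p) \in \dom(f)$ and $\Phi_{\bar i}(p, q) \in g(p)$ for every $q \in f\Phi_{\bar e}(p)$, so $\jjump{f}(\bar e, \bar i, p) \subseteq g(p) = \totalization{g}(p)$; if $p \notin \dom(g)$, then $\totalization{g}(p) = \Baire$, so any output is accepted regardless of which branch of the tot-jump definition fires.

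For the second bullet, I would define $h \pmfunction{\Baire}{\Baire}$ on exactly the same domain as the first case of the tot-jump definition, by setting
\[
    h(e, i, p) := \{\, \Phi_i(p, q) \st q \in f\Phi_e(p) \,\}
\]
whenever $\Phi_e(p) \in \dom(f)$ and $\Phi_i(p, q)\downarrow$ for every $q \in f\Phi_e(p)$. By construction, $\totalization{h}$ coincides with $\jjump{f}$. It remains to prove $h \weiequiv f$. The direction $h \weireducible f$ is immediate: the forward functional $(e, i, p) \mapsto \Phi_e(p)$ and the backward functional $((e, i, p), q) \mapsto \Phi_i(p, q)$ do the job by the very definition of $\dom(h)$. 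For $f \weireducible h$, I would fix indices $\hat e, \hat i$ of the projections $p \mapsto p$ and $(p, q) \mapsto q$ respectively, and send $x \in \dom(f)$ to $(\hat e, \hat i, x)$; then $h(\hat e, \hat i, x) = f(x)$ and the identity recovers solutions.

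The final assertion of the statement is then an immediate corollary: the first bullet provides an upper bound on all $\totalization{g}$ for $g \weireducible f$, and the second exhibits an $h \weiequiv f$ whose totalization realizes that bound. The only real bookkeeping concerns how the triple $(e, i, p)$ is coded as an element of $\Baire$ under the standard pairing, but this is entirely transparent. I do not foresee a genuine obstacle beyond notation.
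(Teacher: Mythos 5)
Your proposal is correct and follows essentially the same route as the paper: the same forward map $p \mapsto (e,i,p)$ with projection backward for the first bullet, and the same witness $h$ (the tot-jump with the ``otherwise'' branch removed) for the second, with $\totalization{h} = \jjump{f}$ and $h \weiequiv f$ verified exactly as you do. The only difference is that you spell out the reductions witnessing $h \weiequiv f$, which the paper leaves as immediate.
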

\begin{proof}
   Fix a problem $f$. Assume that $g\weireducible f$ via $\Phi_e,\Phi_i$. It is straightforward to see that
   $\totalization{g}\weireducible \jjump{f}$ is witnessed by the maps $p\mapsto
   (e,i,p)$ and $(p,q)\mapsto q$. Indeed, if $p\in \dom(g)$ then $\Phi_e(p)\in
   \dom(f)$ and, for every $q\in f\Phi_e(p)$, $\Phi_i(p,q)\in g(p)$. In
   particular, $\jjump{f}(e,i,p) \subseteq g(p)$. On the other hand, if
   $p\notin\dom(g)$ then $\jjump{f}(e,i,p)\subseteq \totalization{g}(p)=\Baire$.

   To conclude the proof, let us define
   \[
   h(e,i,p) := \{ \Phi_i(p,q) \st q\in f\Phi_e(p)\}
   \]
   with $\dom(h) := \{ (e,i,p) \in \mathbb{N}\times \mathbb{N}\times \Baire \st
   \Phi_e(p)\in \dom(f) \text{ and } (\forall q\in
   f\Phi_e(p))\; \Phi_i(p,q)\downarrow \}$. It is immediate from the definition
   of~$h$ that $h\weiequiv f$ and $\jjump{f}\weiequiv \totalization{h}$. 
%
%
\end{proof}

Observe that the previous theorem only deals with problems on the Baire space. In fact, if $X$ and $Y$ are arbitrary represented spaces, the total continuation of a problem $g\pmfunction{X}{Y}$ is defined as 
\[ \totalization{g}(x) := \begin{cases}
   g(x) & \text{if } x\in\dom(g)\\
   Y    & \text{otherwise.}
\end{cases}\]
In particular, choosing $Y$ so that none of its elements has a computable name, we can easily construct a counterexample to the first item of the previous theorem. 

While this could be an obstacle to lifting $\jjumpop$ to the Weihrauch degrees, we defined the tot-jump of a problem on arbitrary represented spaces as the tot-jump of its realizer version. Choosing a canonical representative for each degree allows us to focus only on problems on the Baire space. The next theorem shows that the tot-jump is a degree-theoretic operator that induces a jump operator on the Weihrauch degrees.

\begin{theorem}
For every~$f$, $f\strictlyweireducible \jjump{f}$. Moreover, for every $f,g$,
if $f\weireducible g$ then $\jjump{f}\weireducible \jjump{g}$.
\end{theorem}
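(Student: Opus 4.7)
The claim splits into three parts: the reducibility $f \weireducible \jjump{f}$, the strictness $\jjump{f} \not\weireducible f$, and the monotonicity. The first is immediate from \thref{thm:jump=max_tot}: taking $g := f$ there gives $\totalization{f} \weireducible \jjump{f}$, and $f \weireducible \totalization{f}$ is witnessed by the identity functionals since $\totalization{f}$ extends $f$.

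The strictness is the main obstacle, and I would establish it by a recursion-theoretic diagonal argument. Assume for contradiction that $\jjump{f} \weireducible f$ via a forward functional $\Phi$ and a backward functional $\Psi$. By the double recursion theorem, I would pick $e, i \in \mathbb{N}$ such that, for every $p, q$, $\Phi_e(p) = \Phi(e, i, p)$ and $\Phi_i(p, q)$ is the sequence in $\Baire$ whose first entry is $\Psi((e, i, p), q)(0) + 1$ and whose remaining entries are $0$. Since the putative reduction is total on $\Baire = \dom(\jjump{f})$, one has $\Phi_e(p) \in \dom(f)$ for every $p$, and $\Psi((e, i, p), q)$ converges whenever $q \in f\Phi_e(p)$; hence $\Phi_i(p, q)$ also converges on $f\Phi_e(p)$, placing $\jjump{f}(e, i, p)$ in its non-trivial case. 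The backward functional then forces, for each $r \in f\Phi_e(p)$, the existence of some $q_r \in f\Phi_e(p)$ with $\Psi((e, i, p), r) = \Phi_i(p, q_r)$; reading off the first coordinate, $\Psi((e, i, p), r)(0) = \Psi((e, i, p), q_r)(0) + 1$. Defining $\alpha(r) := \Psi((e, i, p), r)(0)$ yields a function $\alpha \function{f\Phi_e(p)}{\mathbb{N}}$ with $\alpha(r) = \alpha(q_r) + 1$ for some $q_r$ in its domain; taking $r_0 \in f\Phi_e(p)$ realizing $\min \alpha(f\Phi_e(p))$ then gives $\alpha(q_{r_0}) = \alpha(r_0) - 1$, contradicting the well-ordering of $\mathbb{N}$. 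The degenerate case $\dom(f) = \emptyset$ is handled separately: $\jjump{f}$ is then total while no Weihrauch reduction into $f$ exists. The \emph{$+1$ on the first coordinate} trick is the key idea, since it sidesteps the multivaluedness of $f$ by reducing the existence of a reduction to the ill-foundedness of an $\mathbb{N}$-valued function on $f\Phi_e(p)$.

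Finally, monotonicity is a routine translation of reductions. Given $f \weireducible g$ via $\Phi_a, \Phi_b$, I would map an instance $(e, i, p)$ of $\jjump{f}$ to $(e', i', p)$, where $e', i'$ are computed uniformly from $e, i$ (by the s-m-n theorem) so that $\Phi_{e'}(p) = \Phi_a(\Phi_e(p))$ and $\Phi_{i'}(p, q') = \Phi_i(p, \Phi_b(\Phi_e(p), q'))$. A short case analysis shows that when $\jjump{f}(e, i, p)$ is in its non-trivial case, the reduction $f \weireducible g$ forces $\jjump{g}(e', i', p)$ to be non-trivial as well and contained in $\jjump{f}(e, i, p)$, so the identity backward functional works; otherwise $\jjump{f}(e, i, p) = \Baire$ and any output is valid.
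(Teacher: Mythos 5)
Your proposal is correct, and in substance it matches the paper's proof: the same three-part decomposition, the identical index-translation argument for monotonicity, and the same key idea for strictness, namely forcing the diagonal backward functional to output one more than the first coordinate of anything a reduction to $f$ could return, and then contradicting the minimality of that coordinate over the non-empty solution set $f\Phi_e(p)$. The one visible difference is the mechanism of self-reference. You invoke the double recursion theorem to obtain indices $e,i$ simulating the hypothetical reduction pair $(\Phi,\Psi)$ with the $+1$ twist built into $\Phi_i$; the paper instead works with $\inputjjump{f}$, whose instances already carry the indices of the functionals acting on them, and with the strongly equivalent shifted problem $f_d$ (where $d$ adds one to the first coordinate of every output), so that simply plugging the indices of the alleged reduction into the input performs the diagonalization with no fixed-point theorem at all. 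The two routes buy essentially the same thing: yours is more classical-recursion-theoretic and avoids the auxiliary problem, while the paper's avoids the recursion theorem at the cost of the short argument that $f_d \strongweiequiv \jjump{f}$ (which it then reuses to note the strong-Weihrauch version of the theorem). Your first part, deducing $f\weireducible\jjump{f}$ from \thref{thm:jump=max_tot} together with $f\weireducible\totalization{f}$, is fine and not circular, though the paper just exhibits the reduction $x\mapsto(e,i,x)$ directly; and your step selecting $r_0$ and $q_{r_0}$ shares the same mild choice caveat that the paper flags in a footnote, which is harmless since the paper works in $\mathrm{ZFC}$.
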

\begin{proof}
   It is enough to show that the statement holds for problems on the Baire space. The reduction $f\weireducible \jjump{f}$ is straightforward (just map $x$ to $(e,i,x)$, where $e,i$ are indices for the identity function and the projection on the second component respectively), so we only need to show that
$\jjump{f}\not\weireducible f$.

Let $d\function{\Baire}{\Baire}$ be the function defined as $d(p)(0):=p(0)+1$
and $d(p)(n+1):=p(n+1)$. Observe first of all that $\inputjjump{f}$ (and
hence $\jjump{f}$) is strongly Weihrauch equivalent to the multi-valued
function~$f_d$ that, on input $x=\str{e,i}\concat p$, is defined as
\[
f_d(x):= \begin{cases} \{ d\circ \Phi_i(x, q) \st q \in f\Phi_e(x) \} &
           \text{if }\Phi_e(x) \in \dom(f) \text{ and}\\
           &\quad (\forall q\in f\Phi_e(x))\; \Phi_i(x,q)\downarrow, \\
		\Baire	&  \text{otherwise.}
	\end{cases}
\]
Indeed, the reduction $f_d\strongweireducible \inputjjump{f}$ follows
from the fact that, for every $x\in\Baire$ and every $q\in
\inputjjump{f}(x)$, $d(q)\in f_d(x)$. The converse reduction is
witnessed by the maps~$\id$ and
\[
q \mapsto \begin{cases}
		q 	& \text{if }q(0)=0,\\
		d^{-1}(q) & \text{if } q(0)>0.
	\end{cases}
\]
It is therefore enough to show that $f_d\not\weireducible f$. Assume towards
a contradiction that $f_d\weireducible f$ is witnessed by the
functionals~$\Phi_e$ and~$\Phi_i$. Fix $p\in\Baire$ and let
$y:=\str{e,i}\concat p$. Since~$f_d$ is total, $y\in\dom(f_d)$. Moreover, by
definition of Weihrauch reduction, $\Phi_e(y)\in\dom(f)$ and for every $q\in
f\Phi_e(y)$, $\Phi_i(y,q)\downarrow$.

We have now reached a contradiction, as for every non-empty $X\subseteq
\Baire$, $X\not\subseteq d(X)$ (consider $p \in X$ such that $p(0)$ is
minimal). In particular, taking $X = \{ \Phi_i(y, t) \st t \in f\Phi_e(y) \}
\neq \emptyset$, there is $q\in f\Phi_e(y)$ such that
\[
\Phi_i(y,q)\notin d(X) = \{ d\circ \Phi_i(y, t)
   \st t \in f\Phi_e(y) \} = f_d(y),
\]
contradicting the definition of Weihrauch reducibility.\footnote{Without sufficiently strong choice axioms, we cannot prove the existence of $q\in f\Phi_e(y)$ witnessing the contradiction.}

To prove the last part of the statement, assume that $f\weireducible g$ via
the functionals $\Phi_e, \Phi_i$. Let $(a,b,p)$ be an input for $\jjump{f}$.
We can uniformly compute $c,d\in\mathbb{N}$ so that $\Phi_c(p) =
\Phi_e(\Phi_a(p))$ and $\Phi_d(p,q)= \Phi_b(p,\Phi_i(\Phi_a(p),q))$. The
reduction $\jjump{f}\weireducible \jjump{g}$ is witnessed by the functionals
$(a,b,p) \mapsto (c,d,p)$ and $(p,q)\mapsto q$.

Indeed, if $\Phi_a(p)\in\dom(f)$ and, for every $q\in f\Phi_a(p)$,
$\Phi_b(p,q)\downarrow$, then $\jjump{f}(a,b,p) = \{\Phi_b(p,q) \st q\in
f\Phi_a(p)\}$. In this case, by the definition of Weihrauch reducibility,
$\Phi_c(p)=\Phi_e(\Phi_a(p))\in \dom(g)$ and for every $t\in g\Phi_c(p)$,
$\Phi_i(\Phi_a(p),t) \in f\Phi_a(p)$. In particular, $\Phi_b(p,
\Phi_i(\Phi_a(p),t))\downarrow \in \jjump{f}(a,b,p)$. In other words,
$\jjump{g}(c,d,p) \subseteq \jjump{f}(a,b,p)$. The other case (i.e., if
$\Phi_a(p)\notin\dom(f)$ or if there is $q\in f\Phi_a(p)$ such that
$\Phi_b(p,q)\uparrow$) is trivial as $\jjump{g}(c,d,p)\subseteq
\jjump{f}(a,b,p)=\Baire$.
\end{proof}

\begin{remark}
Notice that the same proof shows that~$\jjumpop$ induces a jump operator on
the strong Weihrauch degrees. Indeed, the reductions
$f\weireducible\jjump{f}$  and $\jjump{f}\weireducible \jjump{g}$ (when
$f\weireducible g$) are both strong Weihrauch reductions. Moreover, as
noticed, $f_d \strongweiequiv \inputjjump{f} \strongweiequiv \jjump{f}$,
hence $f\strictlystrongweireducible \jjump{f}$.
\end{remark}

Observe that the definition of $\jjump{f}$ is $\Delta^{1,f}_2$ in the
language of third-order arithmetic, i.e., there is a $\Delta^1_2$-formula
with parameter~$f$ that says ``$t\in \jjump{f}(e,i,p)$''. Indeed,
\begin{itemize}
\item
$\alpha_f(p) := \Phi_e(p)\downarrow \in\dom(f)$ is equivalent to $(\exists r,q)[\; r = \Phi_e(p) \land (r,q)\in f \;]$, which is $\Sigma^{1,f}_1$;
\item
$\beta_f(p):= (\forall q)[\; (\Phi_e(p),q)\in f \rightarrow
\Phi_i(p,q)\downarrow \;]$ is $\Pi^{1,f}_1$;
\end{itemize}
hence the formula $t \in \jjump{f}(e,i,p)$ can be written as
\[
(\alpha_f(p) \land \beta_f(p))\rightarrow (\exists q \in f\Phi_e(p))\;  t = \Phi_i(p,q).
\]

\begin{remark}\thlabel{rem:no_sigma11_def}
No jump operator on partial multi-valued functions can be defined by a
$\Sigma^{1,f}_1$ formula. To show this, we use the fact that $\CBaire
\weiequiv \codedChoice{\boldfaceSigma^1_1}{}{\Baire}$ (essentially proved in
\cite[Thm.\ 3.11]{KMP20}), where $\codedChoice{\boldfaceSigma^1_1}{}{\Baire}$ is the problem of finding elements in non-empty analytic subsets of $\Baire$. Assume that~$\mathfrak{j}$ is an operator defined
by a $\Sigma^{1,f}_1$-formula. 
In particular, $q \in \mathfrak{j}(\CBaire)(p)$ is a $\Sigma^1_1$-formula $\varphi(p,q)$ where~$\CBaire$ may appear. 
Since ``$x\in \CBaire(A)$'' is arithmetic in $x,A$ (in fact, $\CBaire(A)$ is~$\Pi^0_1$ relative to~$A$), any arithmetic formula involving it is arithmetic as well. This implies that $\varphi(p,q)$ is actually $\Sigma^1_1$ uniformly in~$p,q$ and hence we can use $\codedChoice{\boldfaceSigma^1_1}{}{\Baire}$ to pick a point in $\{ q \in \Baire \st \varphi(p,q)\}$. 
In other words, $\mathfrak{j}(\CBaire) \weireducible \codedChoice{\boldfaceSigma^1_1}{}{\Baire} \weiequiv \CBaire$, so $\mathfrak{j}$ is not strictly increasing.
\end{remark}

It is straightforward to see that the map~$\jjumpop$ is injective. More
interestingly, it is injective on the Weihrauch degrees.

\begin{theorem}\thlabel{thm:jump_endomorphism}
For every $f,g$, if $\jjump{f}\weireducible\jjump{g}$ then $f\weireducible
g$. This implies that the map~$\jjumpop$ is an injective endomorphism on the
Weihrauch degrees.
\end{theorem}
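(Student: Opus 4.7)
The plan is to construct the reduction $f\weireducible g$ by feeding cleverly chosen indices into the hypothesized reduction of $\jjump{f}$ to $\jjump{g}$. Throughout I would use the equivalent formulation $\inputjjump{f}\weireducible\inputjjump{g}$ from the Remark, witnessed by total computable functionals $\Phi,\Psi$. Fix $e_0$ to be an index for the shift $x\mapsto(n\mapsto x(n+2))$, so that $\Phi_{e_0}(\str{e_0,i}\concat p)=p$ for every $i\in\mathbb{N}$ and $p\in\Baire$. Using SMN, fix $i_0^*$ to be an index for the total computable function
\[ (x,q)\mapsto \str{\Psi(x,0^{\mathbb{N}})(0)+1}\concat q. \]
Then for every $p\in\dom(f)$, writing $x:=\str{e_0,i_0^*}\concat p$ and $c:=\Psi(x,0^{\mathbb{N}})(0)+1$, the instance $x$ falls in the first case of $\inputjjump{f}$ (since $\Phi_{e_0}(x)=p\in\dom(f)$ and $\Phi_{i_0^*}$ is total), so
\[ \inputjjump{f}(x)=\{\str{c}\concat q \st q\in f(p)\}. \]
Every element of this set begins with the digit $c$, whereas $\Psi(x,0^{\mathbb{N}})$ begins with $c-1$.

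The central claim is that, for every $p\in\dom(f)$, the $\inputjjump{g}$-instance $y:=\Phi(x)$ falls into the first case of $\inputjjump{g}$; that is, writing $y=\str{e',i'}\concat p'$, we have $\Phi_{e'}(y)\in\dom(g)$ and $\Phi_{i'}(y,q)\downarrow$ for every $q\in g\Phi_{e'}(y)$. Indeed, if instead $\inputjjump{g}(y)=\Baire$, the reduction would force $\Psi(x,s)\in\inputjjump{f}(x)$ for every $s\in\Baire$, and taking $s=0^{\mathbb{N}}$ would place $\Psi(x,0^{\mathbb{N}})$ in a set whose members all begin with $c$, contradicting that it begins with $c-1$. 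Overcoming this ``otherwise'' case is the main obstacle, and the SMN choice of $i_0^*$ is engineered precisely to create this diagonal contradiction.

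With the first case of $\inputjjump{g}$ secured, the reduction drops out: the forward functional sends $p$ to $\Phi_{e'}(y)\in\dom(g)$, and given $q\in g\Phi_{e'}(y)$, the backward functional outputs $\Psi(x,\Phi_{i'}(y,q))$ with its first digit removed. By hypothesis this value lies in $\inputjjump{f}(x)$ and hence has the form $\str{c}\concat q'$ with $q'\in f(p)$, so stripping the leading $c$ delivers $q'\in f(p)$. Combined with the monotonicity of $\jjumpop$ proved in the previous theorem, this establishes the equivalence $f\weireducible g\iff\jjump{f}\weireducible\jjump{g}$, and $\jjumpop$ therefore descends to an injective endomorphism of the Weihrauch degrees.
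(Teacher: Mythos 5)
Your overall strategy is the same as the paper's: build a self-referential instance $x=\str{e_0,i_0^*}\concat p$ of $\inputjjump{f}$ whose solutions carry a ``marker'' digit that $\Psi$ cannot produce on a dummy input, use this to force the $\inputjjump{g}$-instance $\Phi(x)$ out of the ``otherwise'' case, and then read off a reduction $f\weireducible g$. However, there is a genuine gap at the first step: you assume the reduction $\inputjjump{f}\weireducible\inputjjump{g}$ is witnessed by a \emph{total} backward functional $\Psi$, and you use this essentially when you set $\Phi_{i_0^*}(x,q)=\str{\Psi(x,0^{\mathbb{N}})(0)+1}\concat q$. A Weihrauch reduction only guarantees that $\Psi(x,s)$ converges for $s\in\inputjjump{g}(\Phi(x))$; nothing forces $\Psi(x,0^{\mathbb{N}})(0)\downarrow$ when $0^{\mathbb{N}}\notin\inputjjump{g}(\Phi(x))$, and one cannot in general replace the backward functional of a reduction by a total one (in this paper the availability of total witnesses is a substantive extra hypothesis, cf.\ \thref{thm:totjump=totalization}). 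If $\Psi(x,0^{\mathbb{N}})(0)\uparrow$, then $\Phi_{i_0^*}(x,q)\uparrow$ for every $q$, so $x$ itself falls into the ``otherwise'' case and $\inputjjump{f}(x)=\Baire$; then the correctness of $\Psi$ on solutions of $\inputjjump{g}(\Phi(x))$ says nothing about $f(p)$, and your final backward step (strip the first digit of $\Psi(x,\Phi_{i'}(y,q))$) returns garbage. Since whether this happens depends on $p$ and is not effectively decidable, you cannot repair it by a case split.

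The repair is exactly the paper's move: do not hard-wire $0^{\mathbb{N}}$ as the dummy solution. Instead let $\Phi_{i_0^*}(x,q)$ first search (dovetailing over finite prefixes of potential solutions and computation stages) for the first $m$ such that $\Psi(x,z)(0)\downarrow=m$ for some $z\in\Baire$, and then output $\str{q(0)+m+1}\concat q(1)q(2)\cdots$ (or $\str{m+1}\concat q$). This search always terminates, because $\inputjjump{g}(\Phi(x))$ is nonempty and $\Psi$ must converge on each of its elements, so by continuity some finite prefix already determines a value of $\Psi(x,\cdot)(0)$. With this change, for $p\in\dom(f)$ the instance $x$ lands in the first case, every element of $\inputjjump{f}(x)$ has first digit strictly greater than $m$, and your contradiction argument goes through verbatim with the $z$ found by the search in place of $0^{\mathbb{N}}$; the rest of your reduction is then correct. (Your use of $\inputjjump{}$ rather than $\jjump{}$ is a harmless cosmetic difference: it lets you avoid the recursion theorem, which the paper needs so that $\Phi_i$ may refer to its own index.)
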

\begin{proof}
Let $f,g$ be two partial multi-valued functions and assume that
$\jjump{f}\weireducible \jjump{g}$ via the functionals~$\Phi$
and~$\Psi$. Consider a pair $\str{e,i}$ such that~$e$ is an index for~$\id$
and $\Phi_i(p,q)$ is defined as follows: Let $m\in \mathbb{N}$ be the first number found such
that $(\exists z\in\Baire)\; \Psi((e,i,p), z)(0)\downarrow = m$.
Then
\[
\Phi_i(p,q)(n) := \begin{cases}
		q(0)+m+1 	& \text{if } n=0,\\
		q(n)				& \text{otherwise.}
	\end{cases}
\]

To show that $f\weireducible g$, let $p\in\dom(f)$ and consider the input
$(e,i,p)$ for $\jjump{f}$. Let $(a,b,t) = \Phi(e,i,p)$ be an input for
$\jjump{g}$. Observe that $\Phi_a(t)\in \dom(g)$ and for every $r\in
g\Phi_a(t)$, $\Phi_b(t,r)\downarrow$. Indeed, if not, then any $z\in \Baire$
is a valid solution for $\jjump{g}(a,b,t)$. In particular, we could take~$z$
so that $\Psi((e,i,p), z)(0)\downarrow = m$. This would lead to a
contradiction as, by definition of~$\Phi_i$, for every $y\in
\jjump{f}(e,i,p)$ we have $y(0)>m$. In other words,
\[
\jjump{g}(\Phi(e,i,p)) = \{ \Phi_b(t, r) \st (a,b,t) = \Phi(e,i,p)
\text{ and } r\in g\Phi_a(t) \}.
\]
Hence, a solution for $\jjump{f}(e,i,p)$, and in turn for $f(p)$, can be
uniformly obtained from any element of $g(\Phi_a(t))$.
\end{proof}

As we will discuss extensively later,~$\jjumpop$ is not surjective, even on the cone above $\jjumpop(\emptyset)$. The
previous theorem implies that there is a proper substructure of the Weihrauch
degrees that is isomorphic to the Weihrauch degrees. Note that, using this
endomorphism, we obtain two (and, by iterating, infinitely many) new
embeddings of the Medvedev degrees into the Weihrauch degrees (see
\cite{Hinman2012} for a survey on Medvedev reducibility, and see \cite[Thm.\
9.1]{BGP17} for the two known embeddings of the Medvedev degrees into the
Weihrauch degrees).

Observe that, as a corollary of \thref{thm:jump=max_tot}, we obtain that $\jjump{f}$ is never a cylinder.
Indeed, we can prove something slightly stronger:

\begin{proposition}
For every $f\pmfunction{\Baire}{\Baire}$ such that $f
\strictlystrongweireducible\totalization{f}$, $\totalization{f}$ is not a
cylinder.
\end{proposition}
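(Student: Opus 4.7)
The plan is to exhibit a problem $g$ with $g\weireducible \totalization{f}$ but $g\not\strongweireducible \totalization{f}$, thereby witnessing that $\totalization{f}$ is not a cylinder. A natural candidate is $g := \totalization{f}\times\id$. The Weihrauch reduction $g\weireducible \totalization{f}$ is routine: project $(p,q)$ to $p$ via the forward functional, query $\totalization{f}$, and use the backward functional (which has access to the original input) to pair the returned $r\in\totalization{f}(p)$ with $q$.

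For the separation, I would suppose toward a contradiction that $g\strongweireducible\totalization{f}$ is witnessed by computable functionals $\Phi,\Psi$. Then for every $(p,q)$ and every $r\in\totalization{f}\Phi(p,q)$, the pair $\Psi(r)=(\Psi(r)_1,\Psi(r)_2)$ must satisfy $\Psi(r)_1\in\totalization{f}(p)$ and $\Psi(r)_2=q$. The key observation is that if $\Phi(p_0,q)\notin\dom(f)$ for some $p_0$, then $\totalization{f}\Phi(p_0,q)=\Baire$, which forces $\Psi(r)_2=q$ for every $r\in\Baire$; evaluating on $r=0^\mathbb{N}$ yields $q=\Psi(0^\mathbb{N})_2 =: q_0$, a fixed computable element of $\Baire$. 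Consequently, for every $q\neq q_0$ and every $p\in\Baire$, one must have $\Phi(p,q)\in\dom(f)$.

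I would then fix any computable $q^*\in \Baire$ with $q^*\neq q_0$ and set $\Phi'(p):=\Phi(p,q^*)$ and $\Psi'(r):=\Psi(r)_1$. These are both computable (since $q^*$ is), $\Phi'(p)\in\dom(f)$ for every $p$, and whenever $r\in f\Phi'(p)=\totalization{f}\Phi(p,q^*)$ we have $\Psi'(r)\in\totalization{f}(p)$. This yields a strong Weihrauch reduction $\totalization{f}\strongweireducible f$, contradicting the hypothesis $f\strictlystrongweireducible\totalization{f}$. The hard part is just pinning down the rigidity of $\Psi$: because $\Psi$ cannot see $(p,q)$ yet must report $q$ verbatim in the second coordinate of its output, there can be at most one \emph{exceptional} value $q_0$ at which $\Phi$ may leave $\dom(f)$, and this $q_0$ is automatically computable; dodging it with a computable $q^*$ then collapses the putative strong reduction $g\strongweireducible\totalization{f}$ into the forbidden strong reduction $\totalization{f}\strongweireducible f$.
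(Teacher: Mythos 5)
Your proof is correct and takes essentially the same route as the paper: the paper tests cylinderhood with $\id\times\totalization{f}$ (via the criterion that $h$ is a cylinder iff $\id\times h\strongweireducible h$) and, like you, plays the blindness of the backward functional (which must reproduce the $\id$-component even when $\Phi$ escapes $\dom(f)$ and every $r\in\Baire$ is a valid inner answer) against a collapse to the forbidden reduction $\totalization{f}\strongweireducible f$. The only difference is the arrangement of the two ingredients: you first isolate the unique computable exceptional value $q_0$ and then collapse at a computable $q^*\neq q_0$, while the paper first secures an input on which $\Phi$ leaves $\dom(f)$ (otherwise collapsing immediately) and then contradicts the reduction directly by varying the $\id$-coordinate.
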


\begin{proof}
It is well-known that~$g$ is a cylinder iff $\id \times g \strongweireducible
g$ (\cite[Cor.\ 3.6]{BG11}). Assume towards a contradiction that $\id \times
\totalization{f} \strongweireducible \totalization{f}$ is witnessed by the
functionals $\Phi, \Psi$. Notice that, for some computable $p\in\Baire$ and
some $x\in\Baire$, $\Phi(p,x) = z$ for some $z\notin\dom(f)$. Indeed, if this
were not the case, then we would obtain $\totalization{f}\strongweireducible
f$, contradicting the hypothesis.

Since $\totalization{f}(z)=\Baire$, for every $q\in\Baire$ we obtain
$\Psi(q)=(p,y)$ for some $y \in \totalization{f}(x)$. If we consider $(p',x)$
with $p'\neq p$ and $q \in \totalization{f}\Phi(p',x)$, we reach a
contradiction, as $\Psi(q) = (p,y) \notin (\id \times \totalization{f})
(p',x)$.
\end{proof}

Since, as proved in \thref{thm:jump=max_tot}, for every~$f$ there is
$g\weiequiv f$ such that $g\strictlyweireducible\jjump{g}\weiequiv \jjump{f}=\totalization{g}$, the
previous proposition implies that $\jjump{f}$ is not a cylinder.

In the rest of the section, we prove several properties of the tot-jump,
including various results that better describe the range of $\jjumpop$. We first provide an alternative characterization of $\jjump{f}$. For this, we
introduce the following computational problem:

\begin{definition}
Let us define $\W\pmfunction{\Baire}{\Baire}$ as
\[
\W(p):= \{ q \in \Baire \st (\forall i)\; q(i+1)>q(i) \text{ and }
   p\circ q = 0^\mathbb{N} \}.
\]
In other words, $\W(p)$ lists the addresses of infinitely many zeroes of~$p$.
\end{definition}

Notice that~$\W$ is uniformly computable (it can be solved by unbounded search) and
partial, as $\dom(\W)=\{ p\in\Baire \st (\exists^\infty
i)\; p(i)=0\}$. An important property of~$\W$ is that, for any given
$p\in\dom(\W)$ and any $q\in \Baire$, it is c.e.\ to check if $q\notin
\W(p)$. This also motivates the choice of the notation, as~$\W$ is
``translating a $\Pi^0_2$-question into a $\Pi^0_1$-question''.\footnote{It is probably more correct to say that $\totalization{(\W)}$ translates a $\Pi^0_2$-question into a $\Pi^0_1$-question. We discuss this computational problem in Section~\ref{sec:jump_of_problems}.}

\begin{theorem}\thlabel{thm:jump=w*f*w}
For every $f$, $\jjump{f}\weiequiv \totalization{(\W \compproduct f
\compproduct \W)}$.
\end{theorem}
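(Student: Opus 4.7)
The plan is to establish both halves of the claimed equivalence.

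The direction $\totalization{(\W \compproduct f \compproduct \W)} \weireducible \jjump{f}$ is an immediate consequence of \thref{thm:jump=max_tot}: since $\W$ is uniformly computable (it is solved by unbounded search on its domain), we have $\W \weireducible \id$, and by monotonicity of the compositional product, $\W \compproduct f \compproduct \W \weireducible \id \compproduct f \compproduct \id \weiequiv f$. Applying the first bullet of \thref{thm:jump=max_tot} with $g := \W \compproduct f \compproduct \W$ yields the claim.

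For the harder direction $\jjump{f} \weireducible \totalization{(\W \compproduct f \compproduct \W)}$, I would construct the reduction explicitly. Given $(e,i,p)$, the forward functional produces an instance $I$ of $\W \compproduct f \compproduct \W$ of the following shape: the rightmost $\W$ is fed the trivial input $0^{\mathbb{N}}$ and its solution is discarded; the continuous step between it and $f$ ignores its input and returns $\Phi_e(p)$; the continuous step between $f$ and the leftmost $\W$ maps an $f$-answer $q$ to a $\{0,1\}$-valued sequence $b(q)$, constructed by time-bounded simulation of $\Phi_i(p,q)$, that contains, for each $n$ on which $\Phi_i(p,q)\restrict n$ converges, a single zero placed at a position $c_n$ from which both $n$ and $\Phi_i(p,q)\restrict n$ can be recovered (for instance, $c_n := \langle n, s_n, \Phi_i(p,q)\restrict n\rangle$ with $s_n$ the exact convergence step count). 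The backward functional reads from the output of the leftmost $\W$ a strictly increasing enumeration $\eta$ of zero positions of $b(q)$ and, for each coordinate $k$, scans $\eta(0), \dots, \eta(k+1)$ for an entry $c_n$ with $n > k$, returning the $k$-th coordinate of the decoded initial segment and defaulting to $0$ if no such entry is found within this window.

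Correctness rests on two observations. First, $I \in \dom(\W \compproduct f \compproduct \W)$ if and only if $\Phi_e(p) \in \dom(f)$ and $\Phi_i(p,q)\downarrow$ for every $q \in f\Phi_e(p)$---which is precisely the ``good'' branch on which $\jjump{f}(e,i,p)$ is the proper set $\{\Phi_i(p,q) \st q \in f\Phi_e(p)\}$. In this branch, $\eta$ enumerates infinitely many positions $c_n$ with pairwise distinct values of $n$, so by pigeonhole at least one of the first $k+2$ entries has associated $n > k$, and the decoded $k$-th bit is $\Phi_i(p,q)(k)\in\jjump{f}(e,i,p)$. In the ``bad'' branch, $I$ lies outside $\dom(\W \compproduct f \compproduct \W)$, so $\totalization{(\W \compproduct f \compproduct \W)}$ may output any $\sigma \in \Baire$; the bounded backward procedure still halts and returns some element of $\Baire$, which is accepted because $\jjump{f}(e,i,p) = \Baire$.

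The main obstacle is engineering the encoding $q \mapsto b(q)$ so that it is uniformly computable in bounded time while simultaneously satisfying two structural constraints: $b(q) \in \dom(\W)$ iff $\Phi_i(p,q)$ is total, and \emph{every} infinite increasing enumeration of zeros of $b(q)$ effectively encodes $\Phi_i(p,q)$ (needed because $\W$ is multi-valued, so no particular enumeration is guaranteed). Once this is in place, matching the three-layer compositional-product domain with the good branch of $\jjump{f}$ is routine bookkeeping.
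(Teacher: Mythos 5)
Your proof is correct, and its skeleton is the paper's: the easy direction via the first bullet of \thref{thm:jump=max_tot} together with $\W \compproduct f \compproduct \W \weireducible f$, and the hard direction by an explicit reduction in which the middle functional hard-codes $\Phi_e(p)$ and the outer $\W$-instance becomes infinite-zero exactly when $\Phi_i(p,\cdot)$ converges, so that the ``otherwise'' case of the totalization lines up with the ``otherwise'' case of $\jjump{f}$. Where you genuinely differ is in the roles of the two $\W$-layers and in the backward functional. The paper feeds the inner $\W$ a sequence $t$ flagging ``$\Phi_e(p)\downarrow$'', and its backward functional, having access to $(e,i,p)$, simply recomputes $\Phi_i(p,y_2)$ from the $f$-component $y_2$ of the answer, using the $\W$-components only as falsifiable certificates: the failures ``$y_3\notin\W(t)$'' and ``$y_1\notin\W(\Gamma_w(y_2,y_3))$'' are c.e., and upon detection it bails out to $0^\mathbb{N}$, which is what keeps it total on garbage. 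You instead trivialize the inner $\W$ (correctly: a divergent $\Phi_e(p)$ already pushes the instance out of $\dom(\W\compproduct f\compproduct\W)$) and make the outer $\W$-solution itself carry the answer by placing the unique zero for each $n$ at a position coding $(n,s_n,\Phi_i(p,q)[n])$; your backward map is then pure decoding, and the $(k+2)$-entry window plus pigeonhole (distinct zero positions decode to distinct $n$'s) plays the same totality-ensuring role as the paper's bail-out. Your coding is slightly more delicate (the exact step count $s_n$ is indeed what keeps $q\mapsto b(q)$ computable), but it buys a backward functional that never consults the original input, so that side is even a strong reduction; the paper's verify-and-recompute argument avoids coding into positions altogether. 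Both arguments are sound.
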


\begin{proof}
Since~$\W$ is uniformly computable, $\W\compproduct f \compproduct \W
\weireducible f$, so, in light of \thref{thm:jump=max_tot}, we only need to show
that $\jjump{f}\weireducible \totalization{(\W \compproduct f \compproduct
\W)}$. For every input $(e,i,p)$ for $\jjump{f}$, let $t\in \Baire$ be such
that~$t$ has infinitely many zeroes iff $\Phi_e(p)\downarrow$. Let also $v\in
\Baire$ be such that $\Gamma_v(q)=\Phi_e(p)$ for every~$q$ and $w\in\Baire$
be such that $\Gamma_w(q_0,q_1)$ has infinitely many zeroes iff
$\Phi_i(p,q_0)\downarrow$. It is clear that $t,v,w$ are uniformly computable
from $e,i,p$. The forward functional~$\Phi$ of the reduction
$\jjump{f}\weireducible \totalization{(\W \compproduct f \compproduct \W)}$
is the map $(e,i,p) \mapsto (w,v,t)$. We define the backward
functional~$\Psi$ as follows: A solution for $\totalization{(\W \compproduct
f \compproduct \W)}$ is a string of the form $\coding{y_1,y_2,y_3}$. Given
$(e,i,p)$ and $\coding{y_1,y_2,y_3}$, we compute $\Phi(e,i,p)= (w,v,t)$ and
do the following operations in parallel:
\begin{itemize}
\item check whether $y_3\in \W(t)$;
\item check whether $y_1\in \W(\Gamma_w(y_2,y_3))$;
\item compute $\Phi_i(p,y_2)$.
\end{itemize}
Since it is c.e.\ to check if $y_3\notin \W(t)$ or $y_1\notin
\W(\Gamma_w(y_2,y_3))$, as long as~$y_1$ and~$y_3$ ``appear to be correct'', the
backward functional produces $\Phi_i(p,y_2)$. If we see that $y_3\notin
\W(t)$ or $y_1\notin \W(\Gamma_w(y_2,y_3))$, we extend the partial output with
$0^\mathbb{N}$. 	

Recall that, if $\Phi_e(p)\in \dom(f)$ and, for every $q\in f\Phi_e(p)$,
$\Phi_i(p,q)\downarrow$, then $\jjump{f}(e,i,p)= \{ \Phi_i(p,q) \st q\in
f\Phi_e(p) \}$. It is straightforward to check that, in this case, $t\in
\dom(\W)$, $\Gamma_v(y)=\Phi_e(p)\in \dom(f)$ for every~$y$, and for every
$q\in f\Phi_e(p)$, $\Gamma_w(q, \Gamma_v(y))$ has infinitely many zeroes (as $\Phi_i(p,q)\downarrow$). In particular,
every solution $\coding{y_1,y_2,y_3}$ of $\totalization{(\W \compproduct f
\compproduct \W)}(\Phi(e,i,p))$ is such that $y_1\in \W(\Gamma_w(y_2,y_3))$,
$y_2\in f\Phi_e(p)$, and $y_3\in \W(t)$. By definition, the backward
functional will therefore compute $\Phi_i(p,y_2) \in \jjump{f}(e,i,p)$.

On the other hand, if $\Phi_e(p)\notin \dom(f)$ or if there is $q\in
f\Phi_e(p)$ such that $\Phi_i(p,q)\uparrow$, then $\jjump{f}(e,i,p) =
\Baire$. Since $\totalization{(\W \compproduct f \compproduct \W)}$ and
$\Psi$ are total, the claim follows.
\end{proof}

The following proposition shows that, in general, the compositions with $\W$
on both sides are necessary.

\begin{proposition}
There is~$f$ such that
$\jjump{f}\not\weireducible\totalization{(\W\compproduct f)}$ and
$\jjump{f}\not\weireducible \totalization{(f\compproduct\W)}$.
\end{proposition}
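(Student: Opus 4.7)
The plan is to take $f=\W$. With this choice, the compositional products $\W\compproduct f$ and $f\compproduct\W$ both coincide literally with $\W\compproduct\W$, so the two required separations collapse into the single statement
\[
\jjump{\W}\not\weireducible\totalization{(\W\compproduct\W)}.
\]
Combined with \thref{thm:jump=w*f*w}, which gives $\jjump{\W}\weiequiv\totalization{(\W\compproduct\W\compproduct\W)}$, it is enough to show that $\totalization{(\W\compproduct\W\compproduct\W)}\not\weireducible\totalization{(\W\compproduct\W)}$; the intuition is that the genuine outputs of the three-fold composition must satisfy three nested $\Pi^0_2$-conditions, whereas the two-fold version only witnesses two.

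I would argue by contradiction. Suppose such a reduction exists, witnessed by forward functional $\Phi$ and backward functional $\Psi$. Following the diagonalization template of \thref{thm:jump_endomorphism}, I plan to feed $\Psi$ an arbitrary candidate $\totalization{(\W\compproduct\W)}$-solution of $\Phi(p_1,p_2,x)$ and read off a value~$m$ that $\Psi$ commits to placing at position~$0$ of its output. Then I will construct $(p_1,p_2,x)$ genuinely in $\dom(\W\compproduct\W\compproduct\W)$ whose every legitimate solution begins with a value strictly larger than~$m$. The forcing mechanism is to choose $p_1$ so that, on the inputs produced by the first two $\W$-applications, $\Gamma_{p_1}(\cdot)$ outputs a string whose first zero occurs past position~$m$. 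Any $\W$-solution of that string must then begin with a value $>m$, so the first entry of the genuine $\totalization{(\W\compproduct\W\compproduct\W)}$-solution exceeds~$m$, contradicting $\Psi$'s commitment.

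The main obstacle is balancing two competing requirements in the construction: the input $(p_1,p_2,x)$ must lie in $\dom(\W\compproduct\W\compproduct\W)$ so that the reduction is actually tested (not absorbed into the totalization), while simultaneously exceeding $\Psi$'s commitment. The standard tool is a use-continuity argument for $\Phi$ and $\Psi$: fix sufficiently long initial segments of $p_1,p_2,x$ so that $\Phi(p_1,p_2,x)$ and the commitment~$m$ read off $\Psi$ are determined modulo the use, then extend~$p_1$ past the use in order to delay the first zero of $\Gamma_{p_1}(\cdot)$ beyond~$m$ while preserving the validity of all three nested $\Pi^0_2$-conditions defining $\dom(\W\compproduct\W\compproduct\W)$. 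If the direct construction runs into a circularity between $m$ and $p_1$, a recursion-theoretic fixed-point (or a short priority argument) will break it.
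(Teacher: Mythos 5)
There is a fatal problem at the very first step: the choice $f=\W$ cannot work, because the non-reduction you aim for is in fact a reduction. Since $\W$ is uniformly computable and total on its domain, $\W\weiequiv\id$ (map an instance to itself and solve by unbounded search in the backward direction; conversely map anything to $0^\mathbb{N}$ and return the original input). The tot-jump is degree-theoretic, so $\jjump{\W}\weiequiv\jjump{\id}$, and by \thref{thm:j(id)=x_pi} together with \thref{thm:Xpi=totW} this is $\weiequiv\XPi\weiequiv\totalization{(\W)}$. But $\totalization{(\W)}\weireducible\totalization{(\W\compproduct\W)}$: send $p$ to the pair $(v,p)$ where $\Gamma_v$ is the constant functional with value $0^\mathbb{N}\in\dom(\W)$; if $p\in\dom(\W)$ the pair lies in $\dom(\W\compproduct\W)$ and every solution $(y,w)$ has $w\in\W(p)$, so projecting onto $w$ answers $\totalization{(\W)}(p)$, while if $p\notin\dom(\W)$ anything is acceptable. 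Hence $\jjump{\W}\weireducible\totalization{(\W\compproduct\W)}$, so both the statement you reduce to ($\jjump{\W}\not\weireducible\totalization{(\W\compproduct\W)}$) and your reformulation via \thref{thm:jump=w*f*w} ($\totalization{(\W\compproduct\W\compproduct\W)}\not\weireducible\totalization{(\W\compproduct\W)}$) are false, and no amount of use-continuity or recursion-theorem bookkeeping in the second half can rescue the plan. The intuitive count of ``nested $\Pi^0_2$-conditions'' is misleading precisely because the compositional product lets you pad with trivial inner or outer instances.

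The underlying issue is that the proposition needs an $f$ whose instances and solutions carry genuine non-uniform computational content, so that dropping the $\W$ on one side becomes detectable; a uniformly computable $f$ like $\W$ (or anything equivalent to $\id$) is exactly the kind of problem for which $\jjump{f}$ already collapses to a single totalization. The paper's proof instead fixes $A$ with $\emptyset\strictlyturingreducible A\strictlyturingreducible\emptyset'$ and takes $f$ with $\dom(f)=\{A\}$ and $f(A)=\emptyset'$: then a purported reduction to $\totalization{(\W\compproduct f)}$ fails because the forward functional must commit to a guess at $A$ on a computable input (and the backward functional would otherwise compute $\emptyset'$ from $A$), while a purported reduction to $\totalization{(f\compproduct\W)}$ is defeated by a recursion-theorem diagonalization that builds the index $i$ fed into $\jjump{f}$ so that $\Phi_i(A,\emptyset')$ disagrees with whatever the backward functional outputs. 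If you want to salvage your approach, you must first replace $f=\W$ with a problem of this non-uniform kind; the diagonalization sketch alone does not address where the actual obstruction lives.
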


\begin{proof}
Let $A\in \Cantor$ be such that $\emptyset \strictlyturingreducible A
\strictlyturingreducible \emptyset'$. Let~$f$ be the function with
$\dom(f):=\{A\}$ that maps~$A$ to $\emptyset'$.

We first show that $\jjump{f}\not\weireducible \totalization{(\W\compproduct
f)}$. Assume towards a contradiction that the reduction is witnessed by the
maps $\Phi, \Psi$. Let~$i$ be an index for the projection on the second coordinate. Let also~$e$ be such that $\Phi_e(x)$ searches for
$k\in\mathbb{N}$ such that $x = 0^k1\concat p$ for some $p\in\Baire$ and then
outputs~$p$. Since $(e,i,0^{\mathbb{N}})\in \dom(\jjump{f})$ and~$A$ is not
computable, there is~$k$ such that $\Phi(e,i,0^k) = \pairing{\sigma,\tau}$
and $\tau \neq A[|\tau|]$. 
In particular, for every $p\in\Baire$, $y :=
\Phi(e,i,0^k\concat p)\notin \dom(\W \compproduct f)$, and therefore
$0^{\mathbb{N}}\in \totalization{(\W\compproduct f)}(y)$. On the other hand,
it is immediate to check that $\jjump{f}(e,i,0^k1\concat A) = \emptyset'$.
We have reached a contradiction, as $\Psi((e,i,0^k1\concat A), 0^\mathbb{N}) = \emptyset'$
would imply that $\emptyset'\turingreducible A$, against the hypothesis
on~$A$.

Let us now show that $\jjump{f}\not\weireducible
\totalization{(f\compproduct\W)}$. To this end, assume towards a
contradiction that the reduction is witnessed by the functionals~$\Phi$
and~$\Psi$. The idea is to diagonalize by choosing an input $(e,i,x)$ for
$\jjump{f}$ so that the output of~$\Phi_i$ is different from any output
of~$\Psi$. Let~$e$ be an index for the identity functional and let $x=A$.

To find $i\in\mathbb{N}$ we use the recursion theorem. First we define
$\Phi_i(p,q)(0)$: In parallel, we compute $\Psi((e,i,p),\coding{\sigma_1,\sigma_2})$ for all possible $\sigma_1,\sigma_2\in\baire$
until we find a pair $(\sigma_1,\sigma_2)$ such that
\[
\Psi((e,i,p),\coding{\sigma_1,\sigma_2})(0)\downarrow = m,
\]
for some $m\in\mathbb{N}$ and then set $\Phi_i(p,q)(0):=m+1$. At least one
such pair exists, otherwise~$\Psi$ is never defined when the first input is
$(e,i,p)$, contradicting the definition of Weihrauch reducibility.

Since we are interested in defining the behavior of $\Phi_i(p,q)$ only when
$p=x=A$ and $q=\emptyset'$, we describe a procedure for computing
$\Phi_i(x,\emptyset')(1)$ which on different inputs may not converge, or
converge to an arbitrary string. Start from $\pairing{w,z} = \Phi(e,i,x)$ and
use~$\emptyset'$ to check if there is a $\tau\in\baire$ that satisfies all
the following properties:
\begin{itemize}
\item
$(\forall j<\length{\tau}-1)(\tau(j+1)>\tau(j) )$,
\item
$(\forall j<\length{\tau})(z\circ \tau(j) = 0)$,
\item
$\Psi((e,i,x), \pairing{\emptyset'[\length{\tau}],
\tau})(1)\downarrow$.
\end{itemize}
Intuitively, we are searching for an initial segment~$\tau$ of a solution of
$\W(z)$. A solution for $f\compproduct \W(w,z)$ is of the form $(r,s)$, where
$s\in \W(z)$ and $r\in f(\Gamma_w(s))$. Since~$f$ is constant~$\emptyset'$,
to obtain a prefix of a solution we only need to search for a prefix~$\tau$
of~$s$. To diagonalize, we require that~$\tau$ is sufficiently long so that
$\Psi((e,i,x), \pairing{\emptyset'[\length{\tau}], \tau})(1)\downarrow$. Such a~$\tau$ need not exist, as
we do not know if~$z$ has infinitely many zeroes. This is the reason why we
use the oracle to check if such a search terminates.

If~$\tau$ exists, then we can search for it and define
$\Phi_i(x,\emptyset')(1):=\Psi((e,i,x),
\pairing{\emptyset'[\length{\tau}], \tau})(1)+1$. Otherwise, define
$\Phi_i(x,\emptyset')(1):=0$.

For every $n>1$ and every~$p$ and~$q$, define $\Phi_i(p,q)(n):=0$.

Notice that $\jjump{f}(e,i,A) = \Phi_i(A,\emptyset')$. If the input~$z$
for~$\W$ has finitely many~$0$'s or, for some $s\in \W(z)$, $\Gamma_w(s)\neq
A$, then $\pairing{\sigma_1,\sigma_2}$ is the initial segment of a solution
for $\totalization(f\compproduct\W)(w,z)$, hence $\Phi_i(A,\emptyset')(0)\neq
\Psi((e,i,A),\coding{\sigma_1,\sigma_2}\concat 0^\mathbb{N})(0)$.
Otherwise, $\pairing{\emptyset'[\length{\tau}], \tau}$ is the prefix of a
solution for $f\compproduct\W(w,z)$, and therefore $\Phi_i(A,\emptyset')(1)\neq
\Psi((e,i,A),\pairing{\emptyset'[\length{\tau}],
\tau})(1)$.
\end{proof}

The previous result shows that, in general, the use of~$\W$ cannot be avoided
on either side of~$f$. There are, however, many~$f$ such that
$\jjump{f}\weiequiv \totalization{f}$. We now provide a sufficient condition
for this to happen.

\begin{theorem}\thlabel{thm:totjump=totalization}
Fix a problem~$f$. If there are two total computable functions
$\varphi,\psi\function{\mathbb{N}\times\mathbb{N}}{\mathbb{N}}$ such that 
\begin{itemize}
\item
for every $e,i$, $\varphi(e,i)$ and $\psi(e,i)$ are indices of total
functionals and
\item
whenever $g\weireducible f$ via $\Phi_e, \Phi_i$ (which might be
partial), then $g\weireducible f$ via $\Phi_{\varphi(e,i)}$ and
$\Phi_{\psi(e,i)}$,
\end{itemize}
then $\jjump{f}\weiequiv \totalization{f}$.
\end{theorem}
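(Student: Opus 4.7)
The direction $\totalization{f}\weireducible \jjump{f}$ is immediate from \thref{thm:jump=max_tot} applied to $g:=f$. The plan is therefore to produce a Weihrauch reduction $\jjump{f}\weireducible \totalization{f}$ by exploiting the uniform procedures $\varphi,\psi$ to convert the ``candidate reduction'' encoded in an input $(e,i,p)$ of $\jjump{f}$ into one whose forward and backward functionals are total, and then simply push the input through $\totalization{f}$.

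Concretely, for every $e,i\in\mathbb{N}$, define the partial multi-valued function $g_{e,i}\pmfunction{\Baire}{\Baire}$ by
\[
    \dom(g_{e,i}) := \{ p\in\Baire \st \Phi_e(p)\in\dom(f)\text{ and }(\forall q\in f\Phi_e(p))\;\Phi_i(p,q)\downarrow \},
\]
with $g_{e,i}(p):=\{\Phi_i(p,q) \st q\in f\Phi_e(p)\}$ for $p\in\dom(g_{e,i})$. Then $g_{e,i}\weireducible f$ is witnessed by the (possibly partial) functionals $\Phi_e,\Phi_i$ by construction. Applying the hypothesis, $g_{e,i}\weireducible f$ is also witnessed by the functionals $\Phi_{e'}$ and $\Phi_{i'}$, where $e':=\varphi(e,i)$ and $i':=\psi(e,i)$ are indices of \emph{total} functionals. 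Note that $e',i'$ are uniformly computable from $e,i$.

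We now define the reduction. The forward functional maps $(e,i,p)\mapsto \Phi_{e'}(p)$; the backward functional maps $((e,i,p),q)\mapsto \Phi_{i'}(p,q)$. Both are total and uniformly computable. If $p\in \dom(g_{e,i})$, then since $g_{e,i}\weireducible f$ via $\Phi_{e'},\Phi_{i'}$, we have $\Phi_{e'}(p)\in\dom(f)$, so $\totalization{f}(\Phi_{e'}(p))=f\Phi_{e'}(p)$, and for every $q\in f\Phi_{e'}(p)$, $\Phi_{i'}(p,q)\in g_{e,i}(p)=\jjump{f}(e,i,p)$. If instead $p\notin \dom(g_{e,i})$, then by definition $\jjump{f}(e,i,p)=\Baire$, and any output of the backward functional is vacuously correct; totality of $\Phi_{i'}$ ensures that an output is indeed produced.

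The main conceptual point, and the only place where something could go wrong, is the handling of inputs $p\notin\dom(g_{e,i})$: we need the forward and backward functionals to converge even on such $p$ (so that a reduction actually exists as a pair of computable functionals), but we do not need the intermediate value $\Phi_{e'}(p)$ to lie in $\dom(f)$, because $\totalization{f}$ accepts any input. This is exactly what the totality assumption on $\Phi_{\varphi(e,i)}$ and $\Phi_{\psi(e,i)}$ buys us, and it is the reason why merely knowing $g_{e,i}\weireducible f$ via some (partial) functionals would not suffice.
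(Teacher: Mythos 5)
Your proof is correct and follows essentially the same route as the paper: both reduce to showing $\jjump{f}\weireducible \totalization{f}$ via the forward map $(e,i,p)\mapsto \Phi_{\varphi(e,i)}(p)$ and backward map $((e,i,p),q)\mapsto \Phi_{\psi(e,i)}(p,q)$, using the auxiliary problem $p\mapsto\{\Phi_i(p,q)\st q\in f\Phi_e(p)\}$ (which the paper mentions parenthetically and you spell out as $g_{e,i}$) and handling the degenerate case by totality of the new functionals. No gaps.
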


\begin{proof}
By \thref{thm:jump=max_tot}, we only need to show that
$\jjump{f}\weireducible \totalization{f}$. We let the forward functional of
the reduction be defined by $\Phi(e,i,p) := \Phi_{\varphi(e,i)}(p)$.
Similarly, we let the backward functional be defined by $\Psi((e,i,p),q) :=
\Phi_{\psi(e,i)}(p,q)$. The proof is then straightforward: Notice indeed that
if $(e,i,p)$ is an input for $\jjump{f}$ such that $\Phi_e(p)\in\dom(f)$ and
for every $q\in f\Phi_e(p)$, $\Phi_i(p,q)\downarrow$, then the functionals
$\Phi_e,\Phi_i$ are witnessing the reduction $g\weireducible f$ for some
problem~$g$ (e.g., we can take~$g$ to be the problem that maps~$p$ to the set
$\{\Phi_i(p,q) \st q \in f\Phi_e(p) \}$). In particular, the second item in
the hypotheses implies that  $\Phi(e,i,p)=\Phi_{\varphi(e,i)}(p)\in \dom(f)$
and, for every $q\in f\Phi(e,i,p)$, $\Psi((e,i,p),q) = \Phi_{\psi(e,i)}(p,q) \in
g(p) = \jjump{f}(e,i,p)$.

On the other hand, if $\Phi_e(p)\notin\dom(f)$ or if there is $q\in
f\Phi_e(p)$ such that $\Phi_i(p,q)\uparrow$, then $\jjump{f}(e,i,p)=\Baire$,
hence the claim follows by the totality of $\Phi,\Psi$ (guaranteed by the
first item in the hypotheses).
\end{proof}

Intuitively, the hypotheses of the previous result require that any reduction
$g\weireducible f$ witnessed by $\Phi_e, \Phi_i$ is, in fact, witnessed by
total functionals, and the indices for such functionals can be found
uniformly in $e,i$. \thref{thm:totjump=totalization} can be rephrased as
follows:

\begin{corollary}\thlabel{thm:jump=tot}
Fix a partial multi-valued function~$f$. Assume there are total computable
functionals~$\Phi$ and~$\Psi$ such that for every $e,i\in\mathbb{N}$ and
every $p\in\Baire$, if $\Phi_e(p)\in\dom(f)$ and for every $q\in f\Phi_e(p)$,
$\Phi_i(p,q)\downarrow$, then we have $\Phi(e,i,p)\in\dom(f)$ and
$\{ \Psi((e,i,p),q) \st q \in f\Phi(e,i,p)\} \subseteq \{ \Phi_i(p,q) \st q \in f\Phi_e(p)\}$.

Then $\jjump{f}\weiequiv \totalization{f}$.
\end{corollary}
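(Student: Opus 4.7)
The plan is to derive this corollary as a direct consequence of \thref{thm:totjump=totalization} by extracting the index functions $\varphi, \psi$ required there from the given total computable functionals $\Phi, \Psi$ via the $s$-$m$-$n$ theorem.

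First I would apply $s$-$m$-$n$ to $\Phi$ to obtain a total computable function $\varphi\function{\mathbb{N}\times\mathbb{N}}{\mathbb{N}}$ such that $\Phi_{\varphi(e,i)}(p) = \Phi(e,i,p)$ for all $e,i,p$, and similarly obtain $\psi$ with $\Phi_{\psi(e,i)}(p,q) = \Psi((e,i,p),q)$. Since $\Phi$ and $\Psi$ are total, every $\Phi_{\varphi(e,i)}$ and $\Phi_{\psi(e,i)}$ is total, so the first bullet of the hypothesis of \thref{thm:totjump=totalization} is immediate.

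Next I would verify the second bullet. Suppose $g\weireducible f$ via $\Phi_e,\Phi_i$, and fix any $p\in\dom(g)$. By the definition of Weihrauch reducibility, $\Phi_e(p)\in\dom(f)$ and $\Phi_i(p,q)\downarrow$ (in fact $\Phi_i(p,q)\in g(p)$) for every $q\in f\Phi_e(p)$. The hypothesis of the corollary then yields $\Phi(e,i,p)\in\dom(f)$ together with the inclusion $\{\Psi((e,i,p),q) \st q\in f\Phi(e,i,p)\}\subseteq \{\Phi_i(p,q) \st q\in f\Phi_e(p)\}\subseteq g(p)$. Unpacking the definitions of $\varphi,\psi$, this says $\Phi_{\varphi(e,i)}(p)\in\dom(f)$ and $\Phi_{\psi(e,i)}(p,q)\in g(p)$ for every $q\in f\Phi_{\varphi(e,i)}(p)$, which is exactly the statement that $g\weireducible f$ via $\Phi_{\varphi(e,i)},\Phi_{\psi(e,i)}$. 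An appeal to \thref{thm:totjump=totalization} then gives $\jjump{f}\weiequiv \totalization{f}$.

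There is no real obstacle here; the corollary is essentially a repackaging of the theorem, with the only content being the observation that a uniform pair of total functionals $\Phi, \Psi$ acting on the triple $(e,i,p)$ is, via $s$-$m$-$n$, the same as uniformly producing indices of total functionals acting on $p$ alone. The only point worth flagging is that the hypothesis is phrased in terms of an inclusion of solution sets rather than an equality, but this is harmless because we only need every $\Psi$-output to land in $g(p)$, which the inclusion directly provides.
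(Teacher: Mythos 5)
Your derivation is correct and matches the paper's intent: the paper presents this corollary precisely as a rephrasing of \thref{thm:totjump=totalization}, and your $s$-$m$-$n$ argument is exactly the implicit bridge (turning the total functionals $\Phi,\Psi$ acting on $(e,i,p)$ into uniformly computed indices $\varphi(e,i),\psi(e,i)$ of total functionals) that makes the rephrasing rigorous. Your closing observation that the inclusion of solution sets suffices is also exactly the point needed, so nothing is missing.
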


This rewording, on top of showing a closer connection with the definition of
the tot-jump, allows us to draw a connection with the notion of
\textdef{transparent} functions, introduced in \cite{BolWei11}. More
precisely, a function $H\pfunction{\Baire}{\Baire}$ is called
\textdef{transparent} if for every computable $F\pfunction{\Baire}{\Baire}$
there is a computable $G\pfunction{\Baire}{\Baire}$ such that $F\circ H = H
\circ G$.

This notion can be naturally generalized to computational problems by
requiring that given two (indices for) functionals $\Phi_e,\Phi_i$ witnessing
$g\weireducible f$, we can uniformly compute the index of a function
$\Phi\pfunction{\Baire}{\Baire}$ such that the reduction $g\weireducible f$ is
witnessed by the functionals~$\Phi$ and $(p,q)\mapsto q$.

We can strengthen this ``generalized transparency'' by requiring that $\Phi$ is a total functional. This strengthening implies that $\jjump{f}\weiequiv \totalization{f}$.
Indeed, these conditions are equivalent to requiring that there is a total
computable~$\Phi$ such that, for every $e,i\in\mathbb{N}$ and every
$p\in\Baire$, if $\Phi_e(p)\in\dom(f)$ and for every $q\in f\Phi_e(p)$,
$\Phi_i(p,q)\downarrow$, then
\[
f\Phi(e,i,p) \neq \emptyset \quad \text{and}\quad  f\Phi(e,i,p)
   \subseteq
   \{ \Phi_i(p,q) \st q \in f\Phi_e(p)\}.
\]

While many problems (including~$\mflim$ and its iterations) satisfy the above
conditions, the assumption that the backward functional is exactly the
projection on the second component is unnecessarily strong. In fact, any
total computable functional would serve the same purpose. Thus we have the
hypotheses of \thref{thm:jump=tot}. 

In Section~\ref{sec:jump_of_problems}, we will use \thref{thm:jump=tot} to
describe the tot-jump of some natural problems.

\begin{lemma}\thlabel{thm:fractal}
There are $\bar e,\bar \imath\in\mathbb{N}$ such that for every~$f$ and every
$k\in\mathbb{N}$, $\jjump{f}$ is Weihrauch equivalent to its restriction to
$X_k:= \{ \str{\bar e,\bar \imath}\concat 0^k \concat x \st x\in\Baire \}$.
\end{lemma}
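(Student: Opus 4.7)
The plan is to choose $\bar e$ and $\bar\imath$ once and for all as indices of ``universal decoder'' functionals that uniformly strip arbitrarily many leading zeros from their input and then simulate any Weihrauch-reduction-witnesses encoded in the tail. The direction $\jjump{f}\restrict_{X_k}\weireducible\jjump{f}$ is witnessed by the identity (even strongly), so the whole content is in the reverse direction.

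Concretely, I would define $\bar e$ as an index for the partial computable functional that, on input $y\in\Baire$, searches for the least $n$ with $y(n)\neq 0$ (diverging if no such $n$ exists), sets $e:=y(n)-1$, $i:=y(n+1)$, and $p(m):=y(n+2+m)$, and then returns $\Phi_e(p)$. Symmetrically, $\bar\imath$ is an index for the functional that, on input $(y,q)$, performs the same decoding of $y$ to recover $(e,i,p)$ and returns $\Phi_i(p,q)$. Crucially, $\bar e$ and $\bar\imath$ are fixed computable indices depending neither on $f$ nor on $k$, since the decoding scheme (strip all leading zeros, read the rest) is oblivious to the length of the initial run of zeros.

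For the non-trivial reduction $\jjump{f}\weireducible\jjump{f}\restrict_{X_k}$, I would use the forward functional
\[
(e,i,p)\mapsto(\bar e,\bar\imath,0^k\concat\str{e+1,i}\concat p),
\]
whose image lies in $X_k$ (the nonzero entry $e+1$ at position $k$ tells the decoder where the padding ends), together with the projection $((e,i,p),r)\mapsto r$ as the backward functional. By construction, $\Phi_{\bar e}(0^k\concat\str{e+1,i}\concat p)=\Phi_e(p)$ and $\Phi_{\bar\imath}(0^k\concat\str{e+1,i}\concat p,q)=\Phi_i(p,q)$. Hence the clause in \thref{def:tot-jump} placing $\jjump{f}$ properly inside $\Baire$ holds for the image iff it holds for $(e,i,p)$; in that case both values of $\jjump{f}$ coincide as the set $\{\Phi_i(p,q)\st q\in f\Phi_e(p)\}$, and in the complementary case both equal $\Baire$, so any $r$ is acceptable.

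The only mildly delicate point, which is really the heart of the argument, is that $\bar e$ and $\bar\imath$ must be chosen \emph{before} fixing $k$; this is exactly why the encoding uses an arbitrary-length zero padding stripped by unbounded search rather than explicitly signaling $k$. Beyond this, there is no significant obstacle: the lemma is essentially a uniform padding trick expressing that the tot-jump is self-similar along a fixed pair of input coordinates.
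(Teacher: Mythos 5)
Your proof is correct and takes essentially the same approach as the paper: fix once and for all universal functionals $\bar e,\bar\imath$ that strip the zero padding by unbounded search and simulate the encoded instance, map $(e,i,p)$ into $X_k$ via the forward functional, and use the projection as the backward functional, noting that the two values of $\jjump{f}$ agree in both the convergent and the ``otherwise'' case. The only cosmetic difference is the encoding: the paper hides the simulated indices in the length of the zero block (position of the first $1$, using padded indices $n,m>k$), whereas you write $e+1,i$ literally after the padding; both work equally well.
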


\begin{proof}
We let $\bar e, \bar \imath$ be the indices of two universal functionals such
that $\Phi_{\bar e}(p)= \Phi_n(p)$ and $\Phi_{\bar \imath}(p,q)=\Phi_m(p,q)$
where $\coding{n,m}$ is least such that $p(\coding{n,m})=1$. It is obvious
that any restriction of $\jjump{f}$ is Weihrauch reducible to $\jjump{f}$. To
prove that $\jjump{f}\weireducible\jjump{f}\restrict{X_k}$, we let the
forward functional be the map $\Phi(e,i,p):=\str{\bar e,\bar
\imath}\concat 0^{\coding{n,m}}1\concat p$, where $n,m> k$ are such that, for
every~$t$,
\begin{align*}
		\Phi_n(0^t1\concat p) &= \Phi_e(p), \text{ and } \\
		\Phi_m(0^t1\concat p, q) &= \Phi_i(p,q).
\end{align*}
Clearly such $n,m$ can be uniformly computed from $e,i,k$. The backward
functional is the projection on the second coordinate. Note that $\Phi_{\bar e}(0^{\coding{n,m}}1\concat p)=
\Phi_{n}(0^{\coding{n,m}}1\concat p) = \Phi_e(p)$, and similarly $\Phi_{\bar
\imath}(0^{\coding{n,m}}1\concat p, q) = \Phi_m(0^{\coding{n,m}}1\concat p, q) =
\Phi_i(p,q)$. Thus $\jjump{f}(e,i,p)= \jjump{f}(\bar e, \bar
\imath, 0^{\coding{n,m}}1\concat p)$, which concludes the proof.
\end{proof}

\begin{theorem}\thlabel{thm:total_and_join_irreducible}
For every~$f$, $\jjump{f}$ is total and join-irreducible.
\end{theorem}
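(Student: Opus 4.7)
The plan is to prove totality by direct inspection of \thref{def:tot-jump}, and to derive join-irreducibility by combining the self-similarity from \thref{thm:fractal} with a simple continuity argument on the coproduct tag produced by any reduction into $g \sqcup h$.

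Totality is essentially built into the definition. For every triple $(e,i,p)$ one of the two branches of \thref{def:tot-jump} applies, so $\jjump{f}(e,i,p)$ is either the set $\{\Phi_i(p,q) \st q \in f\Phi_e(p)\}$ or all of $\Baire$. In the first case the set is non-empty because $\Phi_e(p)\in\dom(f)$ forces $f\Phi_e(p)\neq\emptyset$ and each $\Phi_i(p,q)$ is guaranteed to converge by the case hypothesis; in the second case it is $\Baire\neq\emptyset$. Either way $\jjump{f}$ has a solution on every input, so it is total.

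For join-irreducibility, suppose $\jjump{f}\weireducible g\sqcup h$ via forward functional $\Phi$ and backward functional $\Psi$. By the definition of $\sqcup$, the first coordinate $\Phi(x)(0)\in\{0,1\}$ is a tag telling whether the remaining output of $\Phi(x)$ is to be interpreted as a $g$-instance (tag $0$) or an $h$-instance (tag $1$). Let $\bar e,\bar\imath$ be as in \thref{thm:fractal} and set $\bar z := \str{\bar e,\bar\imath}\concat 0^{\mathbb{N}}$, a point lying in $X_k$ for every $k$. Since $\jjump{f}$ is total, $\bar z\in\dom(\jjump{f})$, so $\Phi(\bar z)(0)$ is defined with some finite use; by continuity of $\Phi$, there is $N$ large enough that $\Phi(x)(0)$ equals the constant bit $b:=\Phi(\bar z)(0)$ for every $x\in X_N$. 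Assume without loss of generality that $b=0$; then stripping the tag from $\Phi$ and prepending it before calling $\Psi$ yields computable functionals witnessing $\jjump{f}\restrict{X_N}\weireducible g$. A final appeal to \thref{thm:fractal} gives $\jjump{f}\weiequiv\jjump{f}\restrict{X_N}\weireducible g$, and the symmetric case when $b=1$ yields $\jjump{f}\weireducible h$.

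The only real content is the continuity/self-similarity interplay just sketched; I do not expect a genuine obstacle, since \thref{thm:fractal} has already absorbed the work of locating an arbitrarily large neighbourhood $X_N$ that captures the full degree of $\jjump{f}$, reducing join-irreducibility to the elementary fact that the tag of a continuous reduction is locally constant on a basic clopen set.
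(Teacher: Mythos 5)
Your proof is correct and follows essentially the same route as the paper: totality is read off the definition, and join-irreducibility comes from \thref{thm:fractal} together with the observation that, by continuity, the coproduct tag of the forward functional is constant on some $X_N$, so the reduction restricted there lands in a single component. The paper phrases the continuity step as $\Phi(\bar e,\bar \imath,0^k)(0)\downarrow=b$ for some finite $k$, which is the same finite-use argument you make at the point $\str{\bar e,\bar\imath}\concat 0^{\mathbb{N}}$.
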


\begin{proof}
The fact that $\jjump{f}$ is total is apparent by definition. We show that if
$\jjump{f}\weiequiv g_0 \sqcup g_1$ then there is $b<2$ such that
$\jjump{f}\weireducible g_b$, and thus $g_{1-b} \weireducible g_b$. Assume
that the reduction $\jjump{f}\weireducible g_0 \sqcup g_1$ is witnessed by
the functionals~$\Phi$ and~$\Psi$ and let~$\bar e$ and~$ \bar \imath$ be as
in \thref{thm:fractal}. By the continuity of the forward functional, there
exist~$k$ and $b<2$ such that $\Phi(\bar e,\bar \imath, 0^k)(0)\downarrow =
b$. In particular, for every $x\extends 0^k$, the functional~$\Phi$ produces
an input for~$g_b$. Since, by \thref{thm:fractal}, $\jjump{f}$ is equivalent
to its restriction to $\{ \str{\bar e,\bar \imath}\concat 0^k \concat x \st
x\in\Baire \}$, the claim follows.
\end{proof}

We will show in \thref{thm:j_not_onto_total_join-irred} that the range
of~$\jjumpop$ is a proper subset of the total, join-irreducible degrees.

\begin{corollary}
For every $f,g$, $\jjump{f}\sqcup \jjump{g} \weireducible \jjump{f \sqcup
g}$. Moreover, the reduction is strict iff $f \weiincomparable g$.
\end{corollary}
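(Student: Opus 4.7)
The plan is to break the statement into the forward reduction, the easy direction of the biconditional, and the hard direction.

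For the reduction $\jjump{f}\sqcup \jjump{g} \weireducible \jjump{f \sqcup g}$, I would invoke the universal property of $\sqcup$ as the join in the Weihrauch lattice. Since $f \weireducible f \sqcup g$ and $g \weireducible f\sqcup g$, the monotonicity of $\jjumpop$ (proved immediately after \thref{def:tot-jump}) gives $\jjump{f}, \jjump{g} \weireducible \jjump{f\sqcup g}$; taking the join on the left then yields the desired reduction.

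For the biconditional, the easy direction assumes (without loss of generality) $f\weireducible g$. Then $f\sqcup g \weiequiv g$, so by monotonicity $\jjump{f\sqcup g} \weiequiv \jjump{g}\weireducible \jjump{f}\sqcup \jjump{g}$, making the reduction an equivalence (hence not strict).

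The harder direction assumes $f\weiincomparable g$ and derives a contradiction from $\jjump{f\sqcup g}\weireducible \jjump{f}\sqcup \jjump{g}$. Combined with the first part of the statement, this hypothesis upgrades to $\jjump{f\sqcup g}\weiequiv \jjump{f}\sqcup \jjump{g}$. Now I apply the join-irreducibility of $\jjump{f\sqcup g}$ from \thref{thm:total_and_join_irreducible} to conclude that either $\jjump{f\sqcup g}\weireducible \jjump{f}$ or $\jjump{f\sqcup g}\weireducible \jjump{g}$. Injectivity of $\jjumpop$ on the Weihrauch degrees (\thref{thm:jump_endomorphism}, which asserts the stronger statement that $\jjump{h_1}\weireducible \jjump{h_2}$ implies $h_1\weireducible h_2$) then gives $f\sqcup g\weireducible f$ or $f\sqcup g\weireducible g$, i.e., $g\weireducible f$ or $f\weireducible g$, contradicting the incomparability assumption.

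The main obstacle I anticipate is making sure that the right form of injectivity of $\jjumpop$ is available — specifically, that \thref{thm:jump_endomorphism} gives reduction (not just equivalence) preservation, so that the one-sided conclusion from join-irreducibility can be pulled back to a one-sided reduction between $f\sqcup g$ and $f$ or $g$. Once that is in hand, the argument is essentially a short diagram chase combining the monotonicity, injectivity, and join-irreducibility properties of the totalizing jump established earlier in the section.
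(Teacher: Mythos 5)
Your proposal is correct and follows essentially the same route as the paper: monotonicity plus the join property for the reduction, the comparable case giving equivalence, and the incomparable case handled by combining \thref{thm:total_and_join_irreducible} with \thref{thm:jump_endomorphism}. The only (inessential) difference is the order of the two lemmas in the hard direction: the paper first transfers $f \weiincomparable g$ to $\jjump{f} \weiincomparable \jjump{g}$ via \thref{thm:jump_endomorphism} and then invokes join-irreducibility, whereas you apply join-irreducibility first and then pull the resulting reduction back with \thref{thm:jump_endomorphism}.
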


\begin{proof}
The fact that $\jjump{f}\sqcup \jjump{g} \weireducible \jjump{f \sqcup g}$
follows by the monotonicity of~$\jjumpop$ and the fact that~$\sqcup$ is the
join in the Weihrauch degrees. Clearly, if $f\weireducible g$  then $f \sqcup
g \weiequiv g$, hence $\jjump{f} \sqcup \jjump{g} \weiequiv\jjump{g}
\weiequiv \jjump{f\sqcup g}$. Conversely, if $f\weiincomparable g$ then
$\jjump{f} \weiincomparable \jjump{g}$ by \thref{thm:jump_endomorphism}, and
hence $\jjump{ f \sqcup g} \not \weiequiv \jjump{f}\sqcup \jjump{g}$ (by
\thref{thm:total_and_join_irreducible}), which implies that $\jjump{f}\sqcup
\jjump{g} \strictlyweireducible \jjump{f \sqcup g}$.
\end{proof}

\begin{theorem}
For every~$f$, there is~$h$ such that $f\strictlyweireducible h
\strictlyweireducible \jjump{f}$.
\end{theorem}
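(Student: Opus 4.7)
The plan is to construct $h$ in the form $h := f \sqcup \mathfrak{g}$ for a carefully chosen problem $\mathfrak{g}$. The key structural facts I would use are the injectivity of $\jjumpop$ on the Weihrauch degrees (\thref{thm:jump_endomorphism}) and the join-irreducibility of $\jjump{f}$ (\thref{thm:total_and_join_irreducible}). If I can find $\mathfrak{g}$ satisfying (a) $\mathfrak{g}\weireducible \jjump{f}$, (b) $\jjump{f}\not\weireducible \mathfrak{g}$, and (c) $\mathfrak{g}\not\weireducible f$, then $h := f\sqcup \mathfrak{g}$ immediately satisfies $f\weireducible h\weireducible \jjump{f}$, while (c) forces $f\strictlyweireducible h$, and join-irreducibility combined with $\jjump{f}\not\weireducible f$ and (b) forces $h \strictlyweireducible \jjump{f}$.

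The natural first try for $\mathfrak{g}$ is $\mathfrak{g}:=\jjump{f_0}$ for some $f_0 \strictlyweireducible f$. Monotonicity of $\jjumpop$ immediately gives (a), and since $f \not\weireducible f_0$, injectivity gives (b). The condition that requires work is (c), i.e., finding $f_0 \strictlyweireducible f$ with $\jjump{f_0} \not\weireducible f$. For most $f$ this is easy: for example, taking $f_0$ to be a sufficiently weak representative and using that the totalizing jump increases arithmetical complexity in a controlled way makes (c) straightforward.

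The main obstacle is the corner case where every $f_0 \strictlyweireducible f$ has $\jjump{f_0}\weireducible f$; that is, $f$ is closed under the tot-jumps of all strictly weaker problems. In this case the iterated-$\jjumpop$ approach breaks, and I would switch to constructing $\mathfrak{g}$ directly as a partial totalization living strictly inside $\jjump{f}$. Concretely, I would use the fractal property (\thref{thm:fractal}) to identify copies of $\jjump{f}$ inside restricted slices of inputs, and then diagonalize across all potential forward/backward functional pairs witnessing $\jjump{f}\weireducible \mathfrak{g}$ or $\mathfrak{g}\weireducible f$, arranging the outputs of $\mathfrak{g}$ so that each such reduction is blocked on at least one input while $\mathfrak{g}\weireducible \jjump{f}$ is preserved by construction.
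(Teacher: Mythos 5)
Your reduction of the theorem to finding a single problem $\mathfrak{g}$ with (a) $\mathfrak{g}\weireducible \jjump{f}$, (b) $\jjump{f}\not\weireducible\mathfrak{g}$, (c) $\mathfrak{g}\not\weireducible f$, and then taking $h:=f\sqcup\mathfrak{g}$, is sound: strictness on the left follows from (c), and strictness on the right from (b) together with $f\strictlyweireducible\jjump{f}$ and the join-irreducibility of $\jjump{f}$ (\thref{thm:total_and_join_irreducible}), with (b) guaranteed by injectivity (\thref{thm:jump_endomorphism}) when $\mathfrak{g}=\jjump{f_0}$. Moreover, your first try with $f_0=\emptyset$, i.e.\ $\mathfrak{g}\weiequiv\id$, already disposes of every $f\neq\emptyset$ with $\id\not\weireducible f$, which is essentially how the paper treats those cases (there via $h=f\sqcup\id$ or $h=\id$). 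But this leaves exactly the main case $\id\weireducible f$ (and, a minor omission, $f=\emptyset$, where no $f_0\strictlyweireducible f$ exists at all, though that case is easy since $\jjump{\emptyset}\weiequiv\id$ and degrees strictly between $\emptyset$ and $\id$ are well known). In that main case the existence of your $\mathfrak{g}$ is not a side condition: any intermediate $h$ is such a $\mathfrak{g}$ and conversely, so producing $\mathfrak{g}$ carries the entire content of the theorem, and neither of your two routes actually produces it.

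Concretely, the claim that ``for most $f$'' one can find $f_0\strictlyweireducible f$ with $\jjump{f_0}\not\weireducible f$ because the tot-jump ``increases arithmetical complexity'' is unsubstantiated: whether such an $f_0$ always exists is precisely the paper's later theorem that no lower cone is closed under the tot-jump, and its proof there is a genuinely involved noneffective construction (the scrambling function), not a complexity count — a priori $f$ could be strong enough to absorb the tot-jumps of all strictly weaker problems. Your fallback for the ``corner case'' is a statement of intent rather than an argument: simultaneously meeting infinitely many requirements blocking $\mathfrak{g}\weireducible f$ and $\jjump{f}\weireducible\mathfrak{g}$ while preserving the upper bound $\mathfrak{g}\weireducible\jjump{f}$ is exactly the hard density-type construction, and invoking \thref{thm:fractal} does not indicate how the conflict between cone avoidance and the upper bound is resolved. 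The paper sidesteps all of this in the case $\id\weireducible f$ by quoting the known density of the Weihrauch degrees above $\id$ from \cite{LMPSV23}. As written, your proof is therefore incomplete in the central case; it can be repaired either by citing that density result or by first proving the no-principal-ideal theorem, but some such substantive ingredient must be supplied.
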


\begin{proof}
We distinguish three cases: If $\id \weireducible f$, then the claim
immediately follows from the fact that the Weihrauch degrees are dense
above~$\id$ (see \cite{LMPSV23}).

If $\id \weiincomparable f$, then $f\strictlyweireducible f \sqcup \id =: h$.
Moreover, $\id\weireducible \jjump{f}$ (as $\jjump{f}$ is total), hence
$f\sqcup \id \weireducible \jjump{f}$. The fact that the reduction is strict
follows from the fact that $\jjump{f}$ is join-irreducible
(\thref{thm:total_and_join_irreducible}).

If $\emptyset \strictlyweireducible f \strictlyweireducible \id$, then we have $f \strictlyweireducible \id \strictlyweireducible \jjump{f}$ by $\jjump{\emptyset} \weiequiv \id$ (Theorem \ref{thm:J(0)=id} below) and the injectivity of the tot-jump proved in Theorem \ref{thm:jump_endomorphism}.
Finally, if $f=\emptyset$ then $\jjump{f} \weiequiv \id$ by Theorem \ref{thm:J(0)=id}, and the existence of $h$ with $\emptyset \strictlyweireducible h \strictlyweireducible \id$ is well-known.
\end{proof}

\begin{theorem}
The range of~$\jjumpop$ is not dense, i.e., there exist~$f$ and~$g$ such that
$f\strictlyweireducible g$, the set $X:=\{ h \st
\jjump{f}\strictlyweireducible h \strictlyweireducible \jjump{g}\}$ is
non-empty and, for every $h\in X$, $h\notin \ran(\jjumpop)$.
\end{theorem}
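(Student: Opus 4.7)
The plan reduces the theorem to exhibiting a \emph{minimal cover} in the Weihrauch lattice, i.e., a pair $f \strictlyweireducible g$ with no Weihrauch degree strictly in between. The key observation is that \thref{thm:jump_endomorphism} together with monotonicity of $\jjumpop$ gives $a \weireducible b$ if and only if $\jjump{a} \weireducible \jjump{b}$, so strict inequalities are preserved in both directions. Consequently, any $h \in X$ that lies in $\ran(\jjumpop)$ must satisfy $h \weiequiv \jjump{h'}$ for some $h'$ with $f \strictlyweireducible h' \strictlyweireducible g$. Therefore, the requirement ``no $h \in X$ is in $\ran(\jjumpop)$'' is equivalent to demanding that no Weihrauch degree lie strictly between $f$ and $g$.

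Granted such a minimal cover, the non-emptiness of $X$ comes for free. By \thref{thm:total_and_join_irreducible} both $\jjump{f}$ and $\jjump{g}$ are total, and hence lie above $\id$. From $f \strictlyweireducible g$ the characterization above yields $\jjump{f} \strictlyweireducible \jjump{g}$, and density of the Weihrauch degrees above $\id$ \cite{LMPSV23} then supplies a strictly intermediate degree, so $X \ne \emptyset$.

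The main obstacle is therefore the construction of the minimal cover itself. Since the Weihrauch degrees above $\id$ are dense, no such pair can live entirely in the upper cone of $\id$; the pair must involve degrees below or incomparable with $\id$. I would search for $f$ and $g$ among partial problems supported on a single non-computable input, where Weihrauch reductions are finely controlled by the Turing-theoretic relations between the designated input and output. The plan would be to use a forcing or Kleene--Post style construction to produce two such problems $f \strictlyweireducible g$ for which any prospective intermediate $h$ is ruled out by a rigidity property baked into the chosen reals. Establishing this rigidity --- verifying the absence of any strictly intermediate Weihrauch degree --- is the technical heart of the argument.
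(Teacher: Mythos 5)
Your reduction is exactly the paper's: pick $f \strictlyweireducible g$ with $g$ a minimal cover of $f$, get non-emptiness of $X$ from totality of the tot-jump plus density of the Weihrauch degrees above $\id$, and rule out any $h \in X$ of the form $\jjump{h_0}$ via \thref{thm:jump_endomorphism}, since that would force $f \strictlyweireducible h_0 \strictlyweireducible g$. The one piece you leave unproved --- the existence of a minimal cover, which you call the technical heart and propose to build by a Kleene--Post/forcing construction on problems supported on a single non-computable input --- is not constructed in the paper either: it is quoted from \cite{LMPSV23}, the same reference you already invoke for density above $\id$, which in fact provides \emph{strong} minimal covers. So your argument is correct and complete once that citation replaces the sketched construction; your side remark that such a pair cannot lie entirely in the cone above $\id$ (by density there) is also accurate and consistent with where those covers live. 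As written, though, the proposal does not itself establish the existence of the cover, and the rigidity construction you outline is only a plan, so on its own terms there is a gap precisely at that point --- one that is closed by the literature rather than by new work.
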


\begin{proof}
The claim follows immediately from the fact that the Weihrauch degrees are dense above~$\id$ and that there are (strong) minimal
covers in the Weihrauch lattice (see \cite{LMPSV23}). Indeed, it is enough to choose~$f$ and~$g$ such
that~$g$ is a minimal cover of~$f$. Since the tot-jump is always total,
$\ran(\jjumpop)$ is contained in the cone above~$\id$ and there exists~$h$
such that $\jjump{f}\strictlyweireducible h \strictlyweireducible \jjump{g}$.
By \thref{thm:jump_endomorphism}, if $h\weiequiv \jjump{h_0}$ then
$f\strictlyweireducible h_0 \strictlyweireducible g$, contradicting the fact
that~$g$ is a minimal cover of~$f$.
\end{proof}

\begin{proposition}\thlabel{thm:tj-meet}
   For every $f,g$, $\jjump{f \sqcap
   g} \weireducible \jjump{f}\sqcap \jjump{g}$. There exist~$f$ and~$g$ such that $\jjump{f\sqcap g} \strictlyweireducible
\jjump{f}\sqcap \jjump{g}$.
\end{proposition}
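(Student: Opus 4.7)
The first inclusion is immediate from monotonicity: since $f \sqcap g \weireducible f$ and $f \sqcap g \weireducible g$, applying the monotonicity of $\jjumpop$ gives $\jjump{f \sqcap g} \weireducible \jjump{f}$ and $\jjump{f \sqcap g} \weireducible \jjump{g}$, whence $\jjump{f \sqcap g} \weireducible \jjump{f} \sqcap \jjump{g}$ by the universal property of $\sqcap$.

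For strictness, my plan is to exhibit specific $f, g$ witnessing the failure of the converse reduction. A natural candidate is the pair where $f$ has $\dom(f) = \{0^\mathbb{N}\}$ and $f(0^\mathbb{N}) = A$, and $g$ has $\dom(g) = \{0^\mathbb{N}\}$ and $g(0^\mathbb{N}) = B$, for Turing-incomparable noncomputable $A, B$. The structural asymmetry to exploit is that inputs to $\jjump{f} \sqcap \jjump{g}$ are independent pairs $(u, v)$, allowing the ``goodness'' (the conditions $\Phi_{e_b}(p_b) = 0^\mathbb{N}$ and $\Phi_{i_b}(p_b, \cdot)\downarrow$) of the $\jjump{f}$- and $\jjump{g}$-components to be set independently by the adversary, whereas an input $(E, I, P)$ to $\jjump{f \sqcap g}$ forces $\Phi_E(P) = (0^\mathbb{N}, 0^\mathbb{N})$ (the only element of $\dom(f) \times \dom(g)$) and couples the convergence conditions through a single $\Phi_I(P, \cdot)$ that must converge on both tagged solutions $(0, A)$ and $(1, B)$.

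Assuming for contradiction a reduction via $\Phi, \Psi$, I would first analyze the ``fully good'' pair $(u_0, v_0)$ with $\jjump{f}(u_0) = \{A\}$ and $\jjump{g}(v_0) = \{B\}$: by noncomputability of $A, B$, the solver's output set for $\jjump{f \sqcap g}(\Phi(u_0, v_0))$ must be a proper two-element subset of $\Baire$, and Turing-incomparability forces $\Psi$ to preserve tags on the two solver outputs. I would then perturb to pairs $(u_0, v_0^{(s)})$ with $v_0^{(s)} \to v_0$ and each $v_0^{(s)}$ bad for $\jjump{g}$, so that $\jjump{f} \sqcap \jjump{g}(u_0, v_0^{(s)}) = \{(0, A)\} \cup \{1\} \times \Baire$. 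Adversarially selecting solver outputs and combining with the continuity of $\Psi$ along $s \to \infty$, I would extract that $\Psi$ must perform a computable conversion between $A$- and $B$-information, yielding the contradiction. The main obstacle is uniformly ruling out all possible $\Phi, \Psi$: ``smart'' forward functionals, such as $\Phi_E(P) = (0^\mathbb{N}, 0^\mathbb{N})$ constant together with $\Phi_I$ the identity, avoid producing ``bad'' $\jjump{f \sqcap g}$-instances altogether, shifting the obstruction onto the backward functional, which must then handle tag-$0$ solver outputs $(0, A)$ on pairs $(u, v)$ where $\Phi_{i_1}(p_1, A)\uparrow$ (so $\jjump{f}(u) = \Baire$) without a computable way to produce the required tagged $A$-based output; a careful diagonalization is needed to close this gap across all such strategies.
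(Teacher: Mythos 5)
Your first half is exactly the paper's argument (monotonicity plus the universal property of $\sqcap$), and that part is fine. The problem is the second half: what you present is a plan with the decisive case left open, and you say so yourself. The case you flag --- forward functionals such as $\Phi_E$ constantly $(0^{\mathbb{N}},0^{\mathbb{N}})$ with $\Phi_I$ total, which \emph{never} produce a ``bad'' instance of $\jjump{f\sqcap g}$ --- is precisely where the difficulty lives for your choice of $f,g$: since $\dom(f\sqcap g)=\{(0^{\mathbb{N}},0^{\mathbb{N}})\}$ is computable, good instances are computably available, so the whole burden falls on the backward functional, and no argument is given that it cannot cope (the sketched steps are also shaky: Turing incomparability of $A,B$ does not obviously ``force $\Psi$ to preserve tags'', because $\Psi$ also has access to the arbitrary reals $p_0,p_1$ inside $(u,v)$; and the perturbation/continuity argument ``along $s\to\infty$'' is not a licit use of continuity of $\Psi$, which is continuity in its arguments, not in a sequence of correctness constraints). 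So there is a genuine gap, and it is not even established that your candidate pair witnesses strictness at all --- the needed diagonalization against all $(\Phi,\Psi)$, including the ``smart'' ones, is the entire content of the second statement and is missing.

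For comparison, the paper sidesteps this case altogether by putting the noncomputability in the \emph{domain} rather than the output: it takes $f:=\id\restrict{X}$ and $g:=\id\restrict{Y}$ for two Turing-incomparable Turing degrees $X,Y$. Then for inputs of the form $((\bar e,\bar\imath,0^k\concat x),(\bar e,\bar\imath,0^k\concat x))$ with $x\in X$ (the indices $\bar e,\bar\imath$ coming from the fractal lemma, Lemma \ref{thm:fractal}), whatever the forward functional outputs is computable from $x$, hence cannot lie in $\dom(f\sqcap g)=X\times Y$; so the $\jjump{f\sqcap g}$-instance is automatically in the ``otherwise'' case and $0^{\mathbb{N}}$ is always a legitimate solver answer. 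Feeding $0^{\mathbb{N}}$ to $\Psi$ and using continuity to fix the tag $b$ on a finite prefix $0^k$, the fractal lemma then shows that one of $\jjump{f},\jjump{g}$ is uniformly computable, contradicting Theorem \ref{thm:J(0)=id} (via injectivity of $\jjumpop$). If you want to salvage your approach, either switch to an example where no relevant forward image can be a good instance (as the paper does), or actually carry out the diagonalization against backward functionals that you only gesture at; as it stands the proof is incomplete at its essential point.
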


\begin{proof}
   The first part of the statement is straightforward using the fact that $\jjumpop$ is monotone and that $\sqcap$ is the meet in the Weihrauch lattice. 
   
   To prove the second part of the statement, let $X,Y\subseteq \Baire$ be two incomparable Turing degrees (i.e., for some Turing-incomparable $x,y\subseteq \mathbb{N}$, $X$ and $Y$ are, respectively, the equivalence classes of $x$ and $y$ under $\turingequiv$), and let $f:=\id\restrict{X}$ and $g:=\id\restrict{Y}$. Assume towards a contradiction that the reduction $\jjump{f}\sqcap \jjump{g} \weireducible \jjump{f\sqcap g}$ is witnessed by the functionals~$\Phi$ and~$\Psi$. We claim that one of $\jjump{f}$ or $\jjump{g}$ is uniformly computable. This contradicts \thref{thm:J(0)=id} below, which implies that $\jjump{\emptyset}$ is the only uniformly computable tot-jump.
   
   Let $\bar e, \bar \imath$ be as in \thref{thm:fractal}. Observe that, for
   every~$k$ and every $x\in X$,
   \[
   0^\mathbb{N}\in \jjump{f\sqcap g}(\Phi( (\bar e, \bar \imath, 0^k
   \concat x),
   (\bar e, \bar \imath, 0^k\concat x) )).
   \]
   Indeed, if we let $(e, i, z)$ be the input for $\jjump{f \sqcap g}$ given by
   the value $\Phi( (\bar e, \bar \imath, 0^k \concat x), (\bar e, \bar \imath,
   0^k\concat x) )$, then $\Phi_e(z)\notin\dom(f \sqcap g)$ as, by
   hypothesis,~$X$ and~$Y$ are Turing-incomparable (in particular~$x$ does not
   compute any $y\in Y$). Analogously, by swapping the roles of $X$ and $Y$ in the above argument, for every $k\in\mathbb{N}$ and $y\in Y$,
   $0^\mathbb{N}\in \jjump{f\sqcap g}(\Phi( (\bar e, \bar \imath, 0^k \concat
   y), (\bar e, \bar \imath, 0^k\concat y) ))$.
   
   By continuity, there is $k\in\mathbb{N}$ such that $\Psi( ((\bar e, \bar
   \imath, 0^k),(\bar e, \bar \imath, 0^k)), 0^\mathbb{N})(0)$ commits to some
   $b<2$. Without loss of generality, we can assume that $b=0$, i.e., the
   backward functional commits to producing a solution for $\jjump{f}$. We
   therefore obtain that
   \[
   \Psi( ((\bar e, \bar \imath, 0^k\concat x),
   (\bar e, \bar \imath, 0^k\concat x)), 0^\mathbb{N}) \in \jjump{f}
   (\bar e, \bar \imath, 0^k\concat x ),
   \]
   i.e., we can uniformly compute a solution for $\jjump{f}(\bar e,\bar \imath,
   0^k\concat x)$ which, in turn, implies that $\jjump{f}$ is uniformly
   computable.
\end{proof}

The previous proof can be generalized by letting $X,Y$ be
two incomparable Muchnik degrees (see \cite{Hinman2012} for the definition of
Muchnik reducibility).

\smallskip

We conclude this section by providing some (relatively) weak sufficient conditions for not being in the range of $\jjumpop$. 

\begin{definition}
We call a partial multi-valued function~$f$ \textdef{co-total} if, for every
problem~$g\pmfunction{X}{Y}$, where $X$ and $Y$ are represented spaces,
\[ f\weireducible \totalization{g} \iff f\weireducible g.\tag{$\star$}\]
A problem $f$ is called \textdef{Baire co-total} if $(\star)$ holds for all problems $g\pmfunction{\Baire}{\Baire}$.
\end{definition}

The notion of co-totality was introduced in \cite[Def.\ 4.13]{BGCompOfChoice19}. Clearly, Baire co-totality is a weaker property, as $g$ ranges over a smaller family of problems. Intuitively, a problem $f$ is co-total when the totalization operator ``cannot help'' to solve it. To show that Baire co-totality is strictly weaker than co-totality, consider the following simple counterexample: let $\mathrm{NC}\subset \Baire$ be the set of non-computable points, and define $f(0^\mathbb{N}):=\mathrm{NC}$. Let also $X:=\{x_0,x_1\}$ and $Y:=\{y\}$ be two represented spaces, where $x_0$ has a computable name while $x_1$ and $y$ only have non-computable names. Define $g\pfunction{X}{Y}$ as $g(x_1):=y$. Now observe that $f$ is not co-total, as $f\not\weireducible g$, but $f\weireducible \totalization{g}$. On the other hand, $f$ is Baire co-total: indeed, for every $h\pmfunction{\Baire}{\Baire}$, if $f\weireducible \totalization{h}$ via $\Phi, \Psi$, then the two functionals already witness $f\weireducible h$. This is because if $\Phi(0^\mathbb{N})\notin \dom(h)$ then $0^\mathbb{N}\in\totalization{h}(\Phi(0^\mathbb{N}))=\Baire$, but $\Psi(0^\mathbb{N},0^\mathbb{N})\notin f(0^\mathbb{N})$.

We now observe that $f$ is Baire co-total exactly when the tot-jump does not give any useful information to solve $f$. 

\begin{theorem}\thlabel{thm:co-total_char}
   For every~$f$,~$f$ is Baire co-total iff for every~$g$,
   \[ f\weireducible \jjump{g} \iff f\weireducible g. \]
   In particular, if $f$ is (Baire) co-total, then $f\notin\ran(\jjumpop)$.
\end{theorem}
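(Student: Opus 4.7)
My plan is to use \thref{thm:jump=max_tot} as the bridge between the operators $\jjumpop$ and $\totalization{}$: it supplies both the inequality $\totalization{g} \weireducible \jjump{g}$ (obtained by feeding the identity reduction $g\weireducible g$ into its first clause) and a Weihrauch-equivalent representative $h$ of $g$ with $\jjump{g}\weiequiv\totalization{h}$ (its second clause). With these two facts in hand, the biconditional characterization should reduce to unwinding definitions.

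For the forward direction, I would assume that $f$ is Baire co-total and prove the two implications of $f\weireducible \jjump{g} \iff f \weireducible g$ separately. The direction $f\weireducible g \Rightarrow f\weireducible \jjump{g}$ is immediate from $g\weireducible \jjump{g}$ (strict increase of $\jjumpop$). For the non-trivial direction, starting from $f\weireducible \jjump{g}$, I would pick the Weihrauch-equivalent $h$ guaranteed by \thref{thm:jump=max_tot} so that $\jjump{g}\weiequiv \totalization{h}$; this turns the hypothesis into $f\weireducible \totalization{h}$, and Baire co-totality of $f$ then supplies $f\weireducible h \weiequiv g$.

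For the backward direction, I would assume the biconditional and verify Baire co-totality of $f$. Given any $g\pmfunction{\Baire}{\Baire}$ with $f\weireducible \totalization{g}$, the inequality $\totalization{g}\weireducible \jjump{g}$ from \thref{thm:jump=max_tot} yields $f\weireducible \jjump{g}$, so the hypothesis directly gives $f\weireducible g$, as required.

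Finally, for the ``in particular'' clause I would argue by contradiction. Suppose $f$ is Baire co-total and $f\weiequiv \jjump{h}$ for some $h$. Then $f\weireducible \jjump{h}$ combined with the now-established biconditional gives $f\weireducible h$; but $h\strictlyweireducible \jjump{h}\weiequiv f$ (strict increase of $\jjumpop$ was established earlier in the paper), so together we obtain $h\strictlyweireducible f \weireducible h$, a contradiction. I do not anticipate a real obstacle here: the entire content of the argument is a careful bookkeeping exercise on top of \thref{thm:jump=max_tot}, and the only risk is matching the direction of each reduction to the correct clause of the Baire co-totality hypothesis.
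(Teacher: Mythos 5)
Your proposal is correct and takes essentially the same route as the paper: both directions of the equivalence are obtained by unwinding \thref{thm:jump=max_tot} (using $\totalization{g}\weireducible\jjump{g}$ one way and the equivalent representative $h$ with $\jjump{g}\weiequiv\totalization{h}$ the other way), and the ``in particular'' clause follows from strict increase of the tot-jump exactly as you argue. The only detail the paper adds is that when $g$ lives on an arbitrary represented space one first replaces it by its realizer version $g^r$ (with $\jjump{g}=\jjump{g^r}$ and $g^r\weiequiv g$) so that \thref{thm:jump=max_tot}, which is stated for problems on Baire space, applies; this is a purely cosmetic step.
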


\begin{proof}
   The proof is straightforward using \thref{thm:jump=max_tot}. Indeed, assume
   that~$f$ is Baire co-total and let~$g$ be such that $f \weireducible \jjump{g}$. Without loss of generality, we can assume that $g$ is a problem on the Baire space: if not, we can replace $g$ with its realizer version $g^r$ as, by definition, $\jjump{g}=\jjump{g^r}$.
   Since $\jjump{g}\weiequiv \totalization{g_0}$ for some problem $g_0$ on the Baire space with $g_0\weiequiv g$, we
   obtain $f\weireducible \totalization{g_0}$ and hence $f\weireducible g_0
   \weiequiv g$.
   
   The converse implication is similar: Assume that for every~$g$,
   $f\weireducible \jjump{g}$ implies $f \weireducible g$, and let~$h\pmfunction{\Baire}{\Baire}$ be such
   that $f\weireducible \totalization{h}$. Since $\totalization{h}\weireducible
   \jjump{h}$, we immediately obtain $f\weireducible h$.

   Finally, if $f$ is Baire co-total and $f\weiequiv \jjump{h}$ for some $h$, then $f\weireducible h$, against $\jjump{h}\not\weireducible h$.
\end{proof}
   
In \cite{BGCompOfChoice19}, several problems are proved to be co-total including~$\CNatural$ and~$\CCantor$, hence we immediately have the following result:

\begin{corollary}\thlabel{thm:CN_notin_ran(tj)}
The problems~$\CNatural$ and~$\CCantor$ are not in $\ran(\jjumpop)$. \qed
\end{corollary}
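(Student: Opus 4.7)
The plan is essentially a one-line invocation of the machinery set up immediately above the corollary. The paragraph preceding the statement asserts that \cite{BGCompOfChoice19} establishes co-totality for both $\CNatural$ and $\CCantor$. Co-totality requires $(\star)$ to hold for every problem $g$ on arbitrary represented spaces, so it trivially implies Baire co-totality, which only demands $(\star)$ for problems $g\pmfunction{\Baire}{\Baire}$. Hence $\CNatural$ and $\CCantor$ are both Baire co-total.

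With Baire co-totality in hand, I would simply quote the last clause of \thref{thm:co-total_char}: any Baire co-total problem lies outside $\ran(\jjumpop)$. Applying this to $f = \CNatural$ and $f = \CCantor$ yields the desired conclusion, and no further work is required.

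The only ``obstacle'' here is bibliographic rather than mathematical: one must verify that the cited paper proves co-totality of these two problems in exactly the formulation used in \thref{def:co-total} (i.e., with the universal quantifier ranging over all represented-space problems $g$, not just a restricted class). Since the paragraph preceding the corollary explicitly attributes this fact to \cite{BGCompOfChoice19}, the proof reduces to a direct citation. I would therefore write essentially: ``By \cite{BGCompOfChoice19}, both $\CNatural$ and $\CCantor$ are co-total, hence Baire co-total, so the claim follows from \thref{thm:co-total_char}.''
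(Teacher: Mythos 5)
Your proposal is correct and is essentially identical to the paper's own argument: the paper also derives the corollary immediately by citing the co-totality of $\CNatural$ and $\CCantor$ from \cite{BGCompOfChoice19} (which in particular gives Baire co-totality) and invoking the last clause of \thref{thm:co-total_char}.
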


The next theorem leads to a sufficient condition for a function to be Baire co-total. Let $\mathsf{U}\pfunction{\Baire}{\Baire}$ be a fixed universal Turing
functional. Let $\DIS\mfunction{\Baire}{\Baire}$ be the problem defined as
\[
\DIS(p):=\{ q \in\Baire \st \mathsf{U}(p)\neq q\}.
\]
It is immediate from the definition that~$\DIS$ is total. In fact, for every
$p\in\Baire$, $\DIS(p)$ is either~$\Baire$ (if $\mathsf{U}(p)\uparrow$) or
$\Baire\setminus \{ \mathsf{U}(p)\}$. The problem~$\DIS$ was studied
extensively in \cite{BrattkaDis} and is one of the weakest discontinuous
problems.\footnote{In \cite{BrattkaDis}, it was shown that, under
$\mathrm{ZF+DC+AD}$,~$\DIS$ is a strong minimal cover of~$\id$ in the
topological version of Weihrauch reducibility. Such a result cannot be
transferred to the (plain) Weihrauch degrees, as the cone above~$\id$ is
dense in the Weihrauch degrees under~$\mathrm{ZFC}$~\cite{LMPSV23}.} In
particular, $\DIS \weireducible \Choice{2}$ (see, e.g., \cite[Prop.\
5.10]{BrattkaStashing}).

\begin{theorem}[with Arno Pauly]\thlabel{thm:dis_x_g}
If $\DIS \times g \weireducible \jjump{f}$, then $g \weireducible f$.
\end{theorem}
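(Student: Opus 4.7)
The plan is to combine Theorem~\ref{thm:jump=max_tot} with a recursion-theoretic diagonalization against the $\DIS$ component. By Theorem~\ref{thm:jump=max_tot}, we may fix a problem $h\weiequiv f$ with $\jjump{f} \weiequiv \totalization{h}$, so from the hypothesis we get a Weihrauch reduction $\DIS \times g \weireducible \totalization{h}$ witnessed by some computable $\Phi,\Psi$. The target is to extract from these functionals a direct Weihrauch reduction $g \weireducible h$, which suffices since $h\weiequiv f$.

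The key idea is that the output $\Psi((p,x),r)$ must always contain a first coordinate in $\DIS(p)$, and we can use Kleene's recursion theorem to make $\mathsf{U}(p)$ imitate that first coordinate, forcing a contradiction in the ``bad'' branch of the totalization. Concretely, given $x \in \dom(g)$, I will use the parameterized recursion theorem to produce an index $p = p(x)$, computable uniformly in $x$, such that
\[
\mathsf{U}(p) \;=\; \pi_1\, \Psi\bigl((p,x),\,0^{\mathbb{N}}\bigr)
\]
(whenever the right-hand side converges; otherwise $\mathsf{U}(p)\uparrow$, which is harmless because then $\DIS(p)=\Baire$).

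Next I verify that $\Phi(p,x) \in \dom(h)$. Suppose not; then $\totalization{h}(\Phi(p,x))=\Baire$, so $0^{\mathbb{N}}$ is a valid solution and $\Psi((p,x),0^{\mathbb{N}})$ converges to some pair $(a,b) \in \DIS(p)\times g(x)$. In particular $a \ne \mathsf{U}(p)$, contradicting the defining property $\mathsf{U}(p)=a$ forced by the recursion theorem. Thus $\Phi(p,x) \in \dom(h)$, and for every $r \in h\Phi(p,x)$ the value $\Psi((p,x),r)$ lies in $\DIS(p)\times g(x)$, so its second coordinate is a valid $g$-solution.

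Assembling this, the reduction $g \weireducible h$ has forward functional $x \mapsto \Phi(p(x),x)$ and backward functional $(x,r)\mapsto \pi_2\,\Psi((p(x),x),r)$, both uniformly computable in $x$ since $p(x)$ is. The main obstacle I expect is the careful bookkeeping around the recursion-theoretic fixed point: one must check that the case where $\Psi((p,x),0^{\mathbb{N}})$ diverges and the case where it converges are both handled (the former only occurs when already $\Phi(p,x)\in\dom(h)$, so no contradiction is needed there). Everything else reduces to matching the definitions of $\totalization{h}$, $\DIS$, and Weihrauch reduction.
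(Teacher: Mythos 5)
Your proposal is correct and takes essentially the same approach as the paper: a recursion-theoretic fixed point forcing $\mathsf{U}$ on the $\DIS$-input to equal the first component of $\Psi(\cdot\,,0^\mathbb{N})$, which shows the reduction can never land in the ``otherwise'' branch and hence yields a reduction of $g$ to $f$. The only cosmetic difference is that you route through \thref{thm:jump=max_tot} and $\totalization{h}$ with $h\weiequiv f$, whereas the paper unfolds the definition of $\jjump{f}$ directly.
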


\begin{proof}
Let~$\Phi$ and~$\Psi$ witness the reduction $\DIS \times g \weireducible
\jjump{f}$. Let $\mathsf{U}\pfunction{\Baire}{\Baire}$ be the universal computable
functional used in the definition of $\DIS$. By the recursion theorem, there
is a computable $p\in \Baire$ such that $\mathsf{U}(\pairing{p,x})$ is the first
component of $\Psi((\pairing{p,x},x),0^\mathbb{N})$. Consider the reduction
$g \weireducible \jjump{f}$ where the forward functional is given by
$x\mapsto \Phi(\pairing{p,x},x)$ and the backward functional maps $(x,y)$ to
the second component of $\Psi((\pairing{p,x},x),y)$.

We claim that if $x\in\dom(g)$, then $\jjump{f}(\Phi(\pairing{p,x},x))\neq
\Baire$ and so it does not fall in the ``otherwise'' case. This implies that
our reduction of~$g$ to $\jjump{f}$ is essentially a Weihrauch reduction of~$g$
to~$f$. To prove the claim, note that if $x\in\dom(g)$, then
$(\pairing{p,x},x)\in\dom(\DIS\times g)$. If $0^\mathbb{N}\in
\jjump{f}(\Phi(\pairing{p,x},x))$, then
$\Psi((\pairing{p,x},x),0^\mathbb{N})$ must converge to an element $(q,z)\in
\DIS(\pairing{p,x})\times g(x)$. But $\DIS(\pairing{p,x})$ must be different
from~$q$, the first component of  $\Psi((\pairing{p,x},x),0^\mathbb{N})$.
\end{proof}

\begin{corollary}\thlabel{thm:product_dis}
If $\DIS \times g \weireducible g$, then~$g$ is Baire co-total and hence it is not
in the range of~$\jjumpop$. \qed
\end{corollary}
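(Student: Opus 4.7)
The plan is to invoke \thref{thm:co-total_char}, which already provides the equivalence characterizing Baire co-totality and also gives, as its final clause, the conclusion $g\notin\ran(\jjumpop)$ for free. Thus it suffices to verify the characterization: for every problem~$h$, $g\weireducible \jjump{h}$ implies $g\weireducible h$.

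To do this, suppose $g\weireducible \jjump{h}$. Combining this with the hypothesis $\DIS\times g \weireducible g$ and transitivity of Weihrauch reducibility gives $\DIS\times g \weireducible \jjump{h}$. Now apply \thref{thm:dis_x_g} (with $f:=h$) to conclude $g\weireducible h$, as required. By \thref{thm:co-total_char}, $g$ is Baire co-total and hence $g\notin\ran(\jjumpop)$.

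There is essentially no obstacle: the corollary is a direct transitivity argument that threads the hypothesis through \thref{thm:dis_x_g} and then invokes the co-totality characterization. The only thing to be mindful of is that \thref{thm:co-total_char} is stated for problems $h\pmfunction{\Baire}{\Baire}$, but this is harmless since one may always replace $h$ by its realizer version $h^r$ without changing $\jjump{h}$, so the argument applies uniformly.
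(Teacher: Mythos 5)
Your argument is correct and is exactly the intended one: the paper leaves this as an immediate corollary of \thref{thm:dis_x_g} together with the characterization in \thref{thm:co-total_char}, which is precisely the chain of reductions you give (the converse direction of the characterization being automatic from $h\weireducible\jjump{h}$). No issues.
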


As mentioned,~$\DIS$ is quite weak and hence being closed under parallel
product with~$\DIS$ is a rather weak condition satisfied by many natural
problems, like~$\CNatural$, $\CCantor$,~$\mflim$, $\UCBaire$, and~$\CBaire$.

\begin{proposition}
$\DIS$ is not Baire co-total.
\end{proposition}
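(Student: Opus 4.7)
The plan is to exhibit a computational problem $g\pmfunction{\Baire}{\Baire}$ witnessing that $\DIS \weireducible \totalization{g}$ but $\DIS \not\weireducible g$. Because $\DIS$ is itself defined in terms of the universal partial computable functional $\mathsf{U}$, the natural candidate is $g := \mathsf{U}$: totalizing $\mathsf{U}$ ought to remove exactly the obstruction that $\DIS$ is designed to exploit.

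For the positive reduction $\DIS \weireducible \totalization{\mathsf{U}}$, I would use the forward functional $\Phi(p) := p$, which is total, and a backward functional $\Psi(p,q)$ that returns the string obtained from $q$ by modifying its first entry (for instance, $(q(0)+1, q(1), q(2), \ldots)$). If $p\in\dom(\mathsf{U})$, then $\totalization{\mathsf{U}}(p) = \{\mathsf{U}(p)\}$ and $\Psi(p,\mathsf{U}(p))\neq \mathsf{U}(p)$, so $\Psi(p,\mathsf{U}(p)) \in \DIS(p)$. If $p\notin\dom(\mathsf{U})$, then $\DIS(p) = \Baire$, so every output is admissible. Hence the pair $(\Phi, \Psi)$ witnesses the reduction.

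For the negative reduction $\DIS \not\weireducible \mathsf{U}$, I would argue that any computable maps $\Phi, \Psi$ witnessing $\DIS\weireducible \mathsf{U}$ would have to satisfy $\Phi(p)\in\dom(\mathsf{U})$ for every $p\in\Baire$. Composing would yield the total procedure $p\mapsto \Psi(p, \mathsf{U}(\Phi(p)))$ selecting an element of $\DIS(p)$ uniformly in $p$, which is computable since $\Phi$, $\mathsf{U}$, and $\Psi$ are all partial computable and everything in the composition is defined on all of $\Baire$. This would make $\DIS$ uniformly computable, contradicting the standard fact that $\DIS$ is discontinuous (the diagonalization is built directly into its definition via $\mathsf{U}$).

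The main ``obstacle'' is really only conceptual: choosing the right witness $g$. Once one sets $g = \mathsf{U}$, the key intuition is that $\totalization{\mathsf{U}}$ trivially answers the question ``what is $\mathsf{U}(p)$?'' even when $\mathsf{U}(p)\uparrow$, which is precisely the missing information a solver of $\DIS$ would need; removing this partiality lets us compute a value distinct from $\mathsf{U}(p)$ in a single uniform procedure, whereas no such uniform procedure exists directly from $\mathsf{U}$.
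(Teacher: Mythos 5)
Your proof is correct and follows essentially the same route as the paper: exhibit a partial problem on Baire space whose totalization solves $\DIS$, while a Weihrauch reduction of $\DIS$ to that partial problem would yield a computable realizer of $\DIS$, contradicting its known discontinuity (non-computability). The only difference is the witness: you take $g=\mathsf{U}$ with a small diagonalizing backward functional, whereas the paper takes $\DIS\restrict{X}$ with $X=\{p\in\Baire \st \mathsf{U}(p)\downarrow\}$, for which the totalization is literally $\DIS$ and the reduction $\DIS\restrict{X}\weireducible\id$ is immediate.
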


\begin{proof}
Observe that $\DIS = \totalization{(\DIS \restrict{X})}$, where $X:= \{
p\in\Baire \st \mathsf{U}(p)\downarrow\}$. To show that~$\DIS$ is not
Baire co-total, it is enough to show that $\DIS \not \weireducible
\DIS\restrict{X}$. This follows from the fact that
$\DIS\restrict{X}\weireducible\id$: indeed, for every~$p$ such that
$\mathsf{U}(p)\downarrow$, it is enough to consider $q:=n \mapsto
\mathsf{U}(p)(n)+1$.
\end{proof}

Combining this result with \thref{thm:product_dis}, we immediately obtain:

\begin{corollary}
$\DIS\times\DIS \not\weireducible \DIS$. \qed
\end{corollary}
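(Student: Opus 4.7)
The proof is essentially a one-line combination of the two immediately preceding results. The plan is to argue by contradiction: suppose $\DIS \times \DIS \weireducible \DIS$. Then \thref{thm:product_dis}, applied with $g := \DIS$, yields that $\DIS$ is Baire co-total. But this directly contradicts the preceding proposition, which established that $\DIS$ is not Baire co-total.

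Since no further construction or calculation is required, the main (and only) step is simply to invoke these two results in sequence. There is no real obstacle here; the work has already been done in setting up \thref{thm:dis_x_g}, \thref{thm:product_dis}, and the proposition on the failure of Baire co-totality for $\DIS$. The corollary is just the formal consequence of putting them together.

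\begin{proof}
Suppose, towards a contradiction, that $\DIS \times \DIS \weireducible \DIS$. Applying \thref{thm:product_dis} with $g := \DIS$, we conclude that $\DIS$ is Baire co-total. This contradicts the preceding proposition, which shows that $\DIS$ is not Baire co-total.
\end{proof}
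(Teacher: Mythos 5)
Your proof is correct and matches the paper exactly: the corollary is obtained by combining \thref{thm:product_dis} (with $g:=\DIS$) with the preceding proposition that $\DIS$ is not Baire co-total, which is precisely your contradiction argument. Nothing further is needed.
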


\begin{proposition}
   \thlabel{thm:dis_join_irreducible}
   $\DIS$ is join-irreducible.
\end{proposition}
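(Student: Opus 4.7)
The plan is to combine a continuity argument, enabled by the totality of $\DIS$, with a fractal-type property of $\DIS$ under restriction to cylinders. Suppose $\DIS \weiequiv g_0 \sqcup g_1$; in particular $\DIS \weireducible g_0 \sqcup g_1$ via some forward/backward pair $\Phi,\Psi$. Since $\DIS$ is total, $\Phi$ is a total continuous function on $\Baire$, so the branch indicator $p\mapsto \Phi(p)(0)\in\{0,1\}$ partitions $\Baire$ into two clopen sets. At least one of them must contain a cylinder $[\sigma]:=\{p\in\Baire \st p\extends \sigma\}$ for some $\sigma\in\baire$, and on that cylinder $\Phi$ commits to a single branch $b<2$. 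Projecting out the second coordinate from $\Phi$ and postcomposing $\Psi$ with the injection into $\{b\}\times\Baire$ then witnesses $\DIS\restrict[\sigma] \weireducible g_b$.

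The key intermediate step I would prove is the fractal property $\DIS \weireducible \DIS\restrict[\sigma]$ for every $\sigma\in\baire$. For this it suffices to construct a computable $f_\sigma\function{\Baire}{\Baire}$ with $f_\sigma(q)\extends\sigma$ and $\mathsf{U}(f_\sigma(q))\simeq \mathsf{U}(q)$ for all $q\in\Baire$: the maps $q\mapsto f_\sigma(q)$ and $(q,r)\mapsto r$ then witness the reduction, since $\DIS(f_\sigma(q))=\DIS(q)$ whether $\mathsf{U}(q)$ converges or diverges. The construction of $f_\sigma$ is a standard padding argument using universality of $\mathsf{U}$: one realizes the partial computable function ``drop the first $|\sigma|$ symbols of the input and apply $\mathsf{U}$ to the rest'' by an index whose encoded input extends $\sigma$, which can be arranged by the usual padding lemma. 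Composing $\DIS \weireducible \DIS\restrict[\sigma] \weireducible g_b$ then gives $\DIS \weireducible g_b$, establishing join-irreducibility.

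The main point requiring care is the construction of $f_\sigma$, since whether an index's encoded input ``starts with $\sigma$'' depends on the pairing convention used for $\mathsf{U}$. A clean way to bypass this technicality is to carry out the continuity step starting at the specific computable point $0^{\mathbb{N}}$, so that the cylinder produced by clopenness automatically has the form $[0^k]$; padding inputs with a prefix of zeros can be arranged under any standard coding of $\mathsf{U}$ (e.g., by choosing a sufficiently large index $e$ for the partial computable function $q\mapsto \mathsf{U}(q)$ whose code begins with enough zeros). Thus no genuine obstruction arises, and the proof reduces to a clean combination of clopenness of the branch indicator and the fractal lemma.
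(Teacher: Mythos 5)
Your final, assembled argument (the third paragraph) is correct and is essentially the paper's proof: the paper likewise uses continuity of the forward functional at an input ending in $0^\mathbb{N}$ to force commitment to one side $b$ on a cylinder, and then pads inputs into that cylinder while preserving the value of $\mathsf{U}$; its specially chosen index $e$ with $\Phi_e(0^j1\concat p)=\Phi_j(p)$ and $\Phi_e(0^\mathbb{N})\uparrow$, together with the restriction to $X=\{x\in\Baire \st x(0)=e\}$, plays exactly the role of your zero-padding map $f_\sigma$.

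Two caveats, though. First, your stated key lemma --- $\DIS\weireducible \DIS\restrict{[\sigma]}$ for \emph{every} $\sigma\in\baire$, where $[\sigma]:=\{p\in\Baire \st \sigma\prefix p\}$ --- is false, and not merely coding-sensitive: for any universal $\mathsf{U}$ there are $\sigma$ long enough to force the first output symbol of $\mathsf{U}$ on $[\sigma]$ (take a prefix of any convergent computation), and then no computable $f_\sigma$ into $[\sigma]$ can satisfy $\mathsf{U}(f_\sigma(q))\simeq\mathsf{U}(q)$ for all $q$; concretely, under $\mathsf{U}(\str{e}\concat p)=\Phi_e(p)$ with $\Phi_{\sigma(0)}$ constant or empty, $\DIS\restrict{[\sigma]}$ is uniformly computable while $\DIS$ is not. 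Second, your fix does not work ``under any standard coding'': if the index occupies coordinate $0$, no amount of padding makes an index of $\mathsf{U}$ ``begin with zeros''. What you actually need is to fix, without loss of generality, a universal functional with the right shape, e.g.\ $\mathsf{U}(0^e1\concat p):=\Phi_e(p)$ and $\mathsf{U}(0^\mathbb{N})\uparrow$; this is legitimate because the Weihrauch degree of $\DIS$ is independent of the choice of universal functional and join-irreducibility is degree-invariant, but it should be said explicitly. With that normalization, given the $k$ produced by continuity at $0^\mathbb{N}$ you choose by the padding lemma an index $e\ge k$ of $\mathsf{U}$ itself and set $f_k(q):=0^e1\concat q$, which gives $\DIS\weireducible\DIS\restrict{[0^k]}\weireducible g_b$ exactly as you describe. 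This normalization is the same move the paper makes by constructing its special index $e$ and working inside $X$ instead of changing the coding; so, once the overreaching general claim is dropped and the WLOG is justified, your proof goes through and coincides in substance with the paper's.
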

\begin{proof}
   Without loss of generality, we can assume that $\mathsf{U}(\str{e}\concat p)=\Phi_{e}(p)$. Let $e$ be such that $\Phi_e(0^j 1\concat p)= \Phi_j(p)$ and $\Phi_e(0^\mathbb{N})\uparrow$. Notice that $\DIS \weireducible \DIS \restrict{X}$, where $X:=\{ x\in \Baire \st x(0)=e \}$ (it is enough to map $p$ to $\str{e}\concat 0^i1\concat p$, where $i$ is an index of $\mathsf{U}$).
   
   We show that if $\DIS\restrict{X}\weireducible f_0\sqcup f_1$, then $\DIS\restrict{X}\weireducible f_0$ or  $\DIS\restrict{X}\weireducible f_1$. If $\Phi,\Psi$ witness the reduction $\DIS\restrict{X}\weireducible f_0\sqcup f_1$ (in particular, for every input $q$ for $\DIS$, $\Phi(q)$ produces a pair $(b,x)$ with $x\in\dom(f_b)$), then by continuity, there is $k$ such that $\Phi(\str{e}\concat 0^k)(0)=b<2$. Observe that we can uniformly map any $x\in X$ of the form $\str{e}\concat 0^j1\concat p$ to $\str{e}\concat 0^h 1\concat p$ for some $h>k$ such that $\Phi_j(p)=\Phi_h(p)$. 
   The reduction $\DIS\restrict{X}\weireducible f_b$ is therefore witnessed by the maps $\str{e}\concat 0^j1\concat p\mapsto \Phi(\str{e}\concat 0^h 1\concat p)(1)$ and $\Psi$.
\end{proof}

While being join-irreducible and not being Baire co-total are necessary conditions for a Weihrauch degree to be in the range of 
$\jjumpop$, we will show in \thref{cor:DIS_notin_ran(tJ)} that they are not sufficient, as $\DIS$ is not equivalent to the tot-jump of any problem. In other words, the range of
the tot-jump is a proper subset of the set of total, non Baire co-total, join-irreducible degrees.

\section{The jump of specific problems}\label{sec:jump_of_problems}

In this section, we explicitly characterize the tot-jump of various
well-known problems. Let us start with a straightforward example.

\begin{theorem}\thlabel{thm:J(0)=id}
$\jjump{\emptyset} \weiequiv \id$.
\end{theorem}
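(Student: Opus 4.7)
The plan is to unfold the definition of $\jjump{f}$ in the special case $f = \emptyset$ and observe that the ``otherwise'' clause is triggered on every input, so that $\jjump{\emptyset}$ collapses to the total multi-valued function $(e,i,p)\mapsto \Baire$. From there, all that remains is to check that this ``trivially total'' problem is Weihrauch equivalent to $\id$.

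First I would note that since $\dom(\emptyset) = \emptyset$, the condition $\Phi_e(p)\in\dom(\emptyset)$ fails for every $(e,i,p)$. Consulting \thref{def:tot-jump}, this means $\jjump{\emptyset}(e,i,p) = \Baire$ for all $(e,i,p)$. So $\jjump{\emptyset}$ is the total problem whose every fiber is all of Baire space.

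Next I would check the two reductions separately. For $\jjump{\emptyset}\weireducible \id$, take the forward functional to be $(e,i,p)\mapsto 0^\mathbb{N}$ and the backward functional to be $((e,i,p),q)\mapsto q$; since $\id(0^\mathbb{N}) = \{0^\mathbb{N}\}$ and $0^\mathbb{N}\in \Baire = \jjump{\emptyset}(e,i,p)$, this works. For $\id\weireducible \jjump{\emptyset}$, take the forward functional to be $p\mapsto (e,i,p)$ for any fixed $e,i\in\mathbb{N}$ and the backward functional to be $((e,i,p),q)\mapsto p$; the backward functional uses the original input to recover $p\in\id(p)$, and any $q$ returned is irrelevant since the computation is done on $p$ alone.

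There is no real obstacle here; this serves as the base case for many later computations (e.g., it is already used in the density argument above and will underlie \thref{thm:j_not_onto_total_join-irred} and \thref{cor:DIS_notin_ran(tJ)}). The only thing to keep in mind is that ``$\jjump{\emptyset}$ is the maximally permissive total problem,'' and such a problem sits at the same Weihrauch degree as $\id$ because the backward functional is allowed to read the original input.
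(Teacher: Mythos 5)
Your proof is correct and follows essentially the same route as the paper: since $\dom(\emptyset)=\emptyset$, the ``otherwise'' clause always applies, so $\jjump{\emptyset}$ is total and uniformly computable (every fiber is $\Baire$), hence Weihrauch equivalent to $\id$. The paper states this without spelling out the two reductions, which you merely make explicit.
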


\begin{proof}
The proof is trivial as for every $e,i,p$, $\jjump{\emptyset}(e,i,p)=\Baire$,
as there are no $e,p$ such that $\Phi_e(p)\in \dom(\emptyset)$. Therefore,
$\jjump{\emptyset}$ is total and uniformly computable, and hence equivalent
to~$\id$.
\end{proof}

To characterize~$\jjump{\id}$, we first introduce the following problem.

\begin{definition}
Let us define  $\XPi\mfunction{\Baire}{\Baire}$ as
\[
\XPi(p) := \{ q \in \Baire \st (\exists^\infty n)(p(n)=0) \iff
   (\forall n)(q(n)=0) \}.
\]
\end{definition}

Intuitively,~$\XPi$ transforms a $\Pi^0_2$-question into a
$\Pi^0_1$-question.
An alternate form of $\XPi$ was introduced by Neumann and Pauly~\cite{NP18}, who defined
\[
\mathsf{isFinite}_\Sier(p) := \{ q \in \Baire \st (\exists^\infty n)(p(n)=1)
    \iff (\forall n)(q(n)=0) \}.
\]
It is immediate that $\XPi \weiequiv \mathsf{isFinite}_\Sier$.

\begin{proposition}\thlabel{thm:Xpi=totW}
$\XPi \weiequiv \totalization{(\W)}$.
\end{proposition}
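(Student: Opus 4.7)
The plan is to establish both reductions directly, with the forward functional being the identity in each direction and the backward functional carefully designed so that it produces a total output in~$\Baire$ uniformly, both when $p \in \dom(\W)$ and when $p \notin \dom(\W)$.

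For $\XPi \weireducible \totalization{(\W)}$, I would set $\Phi(p) := p$ and define $\Psi(p, q')(n) := 0$ if $q'(0) < q'(1) < \cdots < q'(n)$ and $p(q'(i)) = 0$ for all $i \le n$, and $\Psi(p, q')(n) := 1$ otherwise. When $p \in \dom(\W)$, any $q' \in \W(p)$ satisfies both conditions at every level, so $\Psi(p, q') = 0^\mathbb{N} \in \XPi(p)$. When $p \notin \dom(\W)$, the sequence $q'$ is arbitrary in $\Baire$; either it fails to be strictly increasing, so the first condition fails from some index on, or it is strictly increasing and unbounded, in which case $q'(i)$ eventually exceeds the largest zero of $p$ and the second condition fails. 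In either situation $\Psi(p, q')$ contains a~$1$, as required for membership in $\XPi(p)$.

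For the reverse reduction $\totalization{(\W)} \weireducible \XPi$, I would again take $\Phi(p) := p$ and let $\Psi(p, q)$ run the following procedure. Maintain a state (initially ``searching'') and a counter $n$ (initially $0$). Dovetail two searches: (a)~for the least $k > q'(n-1)$ with $p(k) = 0$ (using the convention $q'(-1) := -1$), and (b)~for some $j$ with $q(j) = 1$. While in the searching state, whichever search succeeds first determines the next action: (a) outputs $q'(n) := k$ and increments $n$, restarting the zero-search; (b) switches to a padding state, where all remaining positions of $q'$ are filled with $0$. When $p \in \dom(\W)$, the unique element of $\XPi(p)$ is $0^\mathbb{N}$, so search (b) never halts and (a) always does, producing the strictly increasing enumeration of zeros of $p$, which lies in $\W(p)$. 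When $p \notin \dom(\W)$, the input $q$ necessarily contains a~$1$, so (b) halts eventually and totality of $\Psi(p,q)$ is secured; since $\totalization{(\W)}(p) = \Baire$, any total output is admissible.

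The main subtlety in both directions is totality of the backward functional: the reduction must handle the ``off-domain'' case $p \notin \dom(\W)$ uniformly, without assuming any structure on the witness it receives. In Direction~1 this is achieved by the ``flip to $1$ at the first detected violation'' rule; in Direction~2 by the race between the zero-search and the witness-search, at least one of which is guaranteed to terminate at every stage on any valid input.
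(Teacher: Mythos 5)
Your first reduction and the overall strategy (identity forward functionals, backward functionals made total by ``flagging'' or ``padding'' as soon as a violation is detected) are essentially the paper's own argument, and the direction $\XPi \weireducible \totalization{(\W)}$ is correct as written. But there is a concrete flaw in the backward functional for $\totalization{(\W)} \weireducible \XPi$: your search (b) looks for an index $j$ with $q(j)=1$, and you justify totality by claiming that when $p\notin\dom(\W)$ the oracle answer $q$ ``necessarily contains a $1$''. That is not what membership in $\XPi(p)$ gives you. By definition, $\XPi(p) = \{ q\in\Baire \st (\exists^\infty n)(p(n)=0) \iff (\forall n)(q(n)=0)\}$, so when $p$ has only finitely many zeroes a valid answer is any $q$ that is not identically zero --- for instance $q = 2^\mathbb{N}$ is a legitimate element of $\XPi(p)$. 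On such an input, search (b) never halts, and once the position counter has passed the last zero of $p$, search (a) never halts either; your procedure then stalls in the searching state and produces only a finite string, so $\Psi(p,q)\notin\Baire$ and the reduction is not witnessed.

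The repair is immediate and brings you in line with the paper's proof: make search (b) look for some $j$ with $q(j)\neq 0$ (the paper's backward functional checks ``$q(i)>0$''), and correspondingly replace the claim ``$q$ contains a $1$'' by ``$q$ contains a nonzero entry''. With that change the race argument goes through: when $p\in\dom(\W)$ the unique solution is $0^\mathbb{N}$, so only (a) fires and you enumerate zeroes of $p$, giving an element of $\W(p)$; when $p\notin\dom(\W)$, the corrected search (b) is guaranteed to halt, the padding state secures totality, and any total output is acceptable since $\totalization{(\W)}(p)=\Baire$.
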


\begin{proof}
For the left-to-right reduction, recall that, given $p,q\in\Baire$, it is
c.e.\ to check whether $q\notin \W(p)$. In particular, given~$p$ and $q\in
\totalization{(\W)}(p)$, we can uniformly compute $t\in\Baire$ defined as
$t(n):=0$ if $q(n+1)>q(n)$ and $p\circ q(n)=0$, and $t(n):=1$ otherwise. It
is apparent that $t\in \XPi(p)$.

Similarly, for the right-to-left reduction, given $p\in\Baire$ and $q\in
\XPi(p)$, we can compute a solution for $\totalization{(\W)}(p)$ as follows:
Let $\sigma_0:=\str{}$. For every~$n$, if $q(i)=0$ for all $i<n$ we check if
$p(n)=0$. If it is, we define $\sigma_{n+1}:=\sigma_n \concat \str{n}$,
otherwise we let $\sigma_{n+1}:=\sigma_n$. If instead $q(i)>0$ for some
$i<n$, we let $\sigma_{n+1} := \sigma_n \concat \str{0}$. It is
straightforward to check that $r:=\bigcup_n \sigma_n \in \Baire$ is uniformly
computable from~$p$ and~$q$ and that $r\in \totalization{(\W)}(p)$.
\end{proof}

\begin{theorem}\thlabel{thm:j(id)=x_pi}
$\jjump{\id} \weiequiv \XPi$.
\end{theorem}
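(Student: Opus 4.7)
The plan is to prove the equivalence by exhibiting Weihrauch reductions in both directions, exploiting the fact that the convergence of a partial computable functional is naturally a $\Pi^0_2$ predicate, and that $\XPi$ is designed precisely to handle such predicates.

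For $\XPi \weireducible \jjump{\id}$, the forward functional sends $p$ to $(e,i,p)$, where $\Phi_e(p)$ is defined to search for zeros of $p$, outputting at position $n$ the location of the $(n+1)$-th zero, and $\Phi_i(p,r) := r$ is the projection on the second component. With this design, $\Phi_e(p)\downarrow$ iff $p$ has infinitely many zeros, and in that case $\Phi_e(p)$ is the canonical strictly-increasing enumeration of the zero positions. Thus $\jjump{\id}(e,i,p)$ is either the singleton $\{\Phi_e(p)\}$ (good case) or $\Baire$ (bad case). The backward functional, on input $(p,y)$, outputs $t \in \Baire$ with $t(n) := 0$ if $y(n+1) > y(n)$ and $p(y(n)) = 0$, and $t(n) := 1$ otherwise---exactly the consistency check used in the proof of \thref{thm:Xpi=totW}.

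For the reverse reduction $\jjump{\id} \weireducible \XPi$, given $(e,i,p)$ I compute uniformly a $t \in \Baire$ such that $t$ has infinitely many zeros iff both $\Phi_e(p)\downarrow$ and $\Phi_i(p,\Phi_e(p))\downarrow$ hold. A standard $\Pi^0_2$-encoding works: at stage $s$, place a $0$ in $t$ only if both computations have produced at least one additional bit since the previous $0$ was placed. Feed $t$ to $\XPi$ to obtain $u$ with $u = 0^\mathbb{N}$ iff we are in the good case. The backward functional builds $y$ bitwise by dovetailing, for each $n$, the computation of $\Phi_i(p,\Phi_e(p))(n)$ against a search for a nonzero bit of $u$: whichever halts first determines $y(n)$. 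In the good case, $u = 0^\mathbb{N}$ forces the computation branch to complete and yields the correct value; in the bad case, $u$ has a nonzero bit, so each $y(n)$ is produced, and since $\jjump{\id}(e,i,p) = \Baire$ any output is acceptable.

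The main obstacle is verifying correctness of the first reduction against an adversarial $y$ in the bad case, where $p$ has only finitely many zeros and $\jjump{\id}$ returns all of $\Baire$. Here the key observation is that no $y \in \Baire$ can make $t = 0^\mathbb{N}$: either $y$ fails to be strictly increasing, producing a $1$ in $t$ immediately, or $y$ is strictly increasing, so $y(n) \geq n$ eventually exceeds the largest zero of $p$ and $p(y(n)) \neq 0$ forces a $1$ at some coordinate. This short case split is what allows the reduction to succeed without ever having to detect the bad case explicitly.
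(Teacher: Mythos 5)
Your proof is correct and takes essentially the same approach as the paper: the direction $\jjump{\id}\weireducible\XPi$ is the paper's argument (encode the $\Pi^0_2$ convergence condition $\Phi_e(p){\downarrow}\land\Phi_i(p,\Phi_e(p)){\downarrow}$, then race the simulation against the search for a nonzero bit of the $\XPi$-answer). For the converse, the paper just cites $\W\weireducible\id$ together with \thref{thm:jump=max_tot} and \thref{thm:Xpi=totW}, and your explicit reduction---computing the canonical zero-enumeration and translating back via the consistency check---is exactly that argument unfolded.
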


\begin{proof}
For the left-to-right reduction, observe that~$\jjump{\id}$ can be written as
follows:
\[
\jjump{\id}(e,i,x) = \begin{cases}
		\Phi_i( x, \Phi_e(x)) &
             \text{if } \Phi_e(x)\downarrow \land\, \Phi_i( x, \Phi_e(x))\downarrow, \\
		\Baire 	& \text{otherwise.}
	\end{cases}
\]
Since the domain of a computable functional is a $\lightfacePi^0_2$-set, we
can uniformly compute $p\in\Baire$ such that
\[
(\exists^\infty n) \, p(n)=0 \iff  \Phi_e(x)\downarrow \land\, \Phi_i( x, \Phi_e(x))\downarrow.
\]
Given $q\in \XPi(p)$, we are able to uniformly compute a solution for
$\jjump{\id}(e,i,x)$ as follows: In parallel, run $\Phi_i( x,
\Phi_e(x))$ and check whether there is~$n$ such that $q(n)\neq 0$. If no
such~$n$ is found then we are producing $\Phi_i( x,
\Phi_e(x))$, which is the correct solution for $\jjump{\id}(e,i,x)$. Otherwise,
it means that $\jjump{\id}(e,i,x)= \Baire$, hence we can stop simulating
$\Phi_i( x, \Phi_e(x))$ and continue the output
with~$0^\mathbb{N}$.

For the right-to-left reduction, note that $\W\weireducible \id$, so \thref{thm:jump=max_tot} and \thref{thm:Xpi=totW} give us $\XPi\weiequiv \totalization{(\W)} \weireducible \jjump{\id}$.
\end{proof}

With a more careful analysis, we can characterize the $n$-th tot-jump
of~$\id$. Intuitively, we can think of $\jjumpn{\id}{n}$ as a problem capturing the
following: You are allowed to ask~$n$ many $\Sigma^0_2$-questions serially.
For every $j< n$, you can see in finite time if the answer to the $j$-th
question is ``yes'' and then you can ask the next question. However, if the
$j$-th answer is ``no'', then the procedure hangs and it is impossible to see
the answers of the remaining questions.

\begin{theorem}
   For every $p\in\Baire$, let $A^p_n := \{ k < n \st (\exists^\infty j)\; p(j)=k \} $. 
For every $n>0$, let $g_n\mfunction{\Baire}{\Baire}$ be defined as
\[
g_n(p) := \begin{cases}
		\left\{0^{\sigma(0)}1^{\sigma(1)}\hdots
            (m-1)^{\sigma(m-1)}m^\mathbb{N }\st
            \sigma \in \mathbb{N}^m \right\} &
            \text{if } A^p_n \neq \emptyset \text{ and }m := \min A^p_n,\\
		\left\{0^{\sigma(0)}1^{\sigma(1)}\hdots
            (n-1)^{\sigma(n-1)}n^\mathbb{N }\st
            \sigma \in \mathbb{N}^n \right\} &
            \text{if } A^p_n = \emptyset.
	\end{cases}
\]
Then $\jjumpn{\id}{n}\weiequiv g_n$.
\end{theorem}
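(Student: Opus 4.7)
The plan is to prove the equivalence by induction on $n$. For the base case $n = 1$, invoke \thref{thm:j(id)=x_pi} to get $\jjump{\id} \weiequiv \XPi$, then verify $\XPi \weiequiv g_1$: both problems resolve the single $\Pi^0_2$-question ``is $0 \in A^p_1$?'', and their output formats interconvert by simple computable maps. For $g_1 \weireducible \XPi$, given the $\XPi$-witness $q$, output $0$ while $q$ is all-zero and switch to outputting $1$ the first time a nonzero bit is observed; the converse reduction is essentially the identity, since every $g_1$-answer already belongs to $\XPi(p)$.

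For the inductive step, assume $\jjumpn{\id}{n} \weiequiv g_n$. Since $\jjumpop$ is a monotone endomorphism on the Weihrauch degrees (\thref{thm:jump_endomorphism}), $\jjumpn{\id}{n+1} \weiequiv \jjump{g_n}$, so it suffices to prove $\jjump{g_n} \weiequiv g_{n+1}$. The idea is that $g_n$ already answers $n$ serial $\Pi^0_2$-queries for values $0, \ldots, n-1$, while the outer $\jjump$ provides exactly one additional $\Pi^0_2$-query covering the value $n$. Concretely, for $g_{n+1} \weireducible \jjump{g_n}$, given $p \in \Baire$, set $\Phi_e(p) := p''$ with $p''(j) := p(j)$ if $p(j) < n$ and $p''(j) := n$ otherwise, so that $A^{p''}_n = A^p_{n+1} \cap \{0, \ldots, n-1\}$. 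The $g_n$-output $q$ then converges to $m'' := \min A^{p''}_n$, or $n$ if this set is empty. Design $\Phi_i(p, q)$ to copy $q(j)$ as long as $q(j) < n$; once $q$ first emits $n$ at some stage $t_0$, continue outputting $n$ at each index $j > t_0$ only after confirming the $(j - t_0)$-th additional occurrence of $n$ in $p$, and hang otherwise. Then $\Phi_i(p, q)\downarrow$ iff $m'' < n$ or $n \in A^p_{n+1}$, and in both cases the output is a valid $g_{n+1}$-staircase ending at $\min A^p_{n+1}$. The remaining case $A^p_{n+1} = \emptyset$ (so $m = n+1$) should fall into the totalization branch of $\jjump{g_n}$.

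For the reverse direction $\jjump{g_n} \weireducible g_{n+1}$, I would construct an input $p^* \in \Baire$ for $g_{n+1}$ whose successive $\Pi^0_2$-conditions encode the serial structure of $\jjump{g_n}$: one tracks the definedness of $\Phi_e(p)$, $n$ of them capture the subqueries implicit in $g_n(\Phi_e(p))$, and one (possibly merged) tracks the definedness of $\Phi_i$; the resulting $g_{n+1}$-staircase then decodes via a backward functional exploiting \thref{thm:jump=w*f*w}. The main obstacle I expect is in the forward direction: the backward functional must produce a valid non-decreasing $g_{n+1}$-staircase ending at $n+1$ even when $\jjump{g_n}$ returns $\Baire$ in the bad case, using only $p$ and the arbitrary output $y$. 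Resolving this requires carefully interleaving the staircase construction with observations of $p$ so that the output stabilizes at $n+1$ when $A^p_{n+1} = \emptyset$ and coincides with $\Phi_i(p, q)$ in the good case.
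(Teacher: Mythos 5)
Your overall strategy is the paper's: induct on $n$, settle the base case via \thref{thm:j(id)=x_pi} together with $g_1\weiequiv\XPi$ (your verification of this is fine), and reduce the step to showing $\jjump{g_n}\weiequiv g_{n+1}$. The gap is in the step, and the obstacle you flag at the end is not a detail that can be "carefully interleaved" away with your choice of $\Phi_i$: if $\Phi_i(p,q)$ emits the staircase values themselves, then \emph{no} backward functional $\Psi$ can exist. Indeed, take $p$ in which every value $\le n$ occurs only finitely often; with your $e,i$ we get $\jjump{g_n}(e,i,p)=\Baire$, so $z=0^\mathbb{N}$ is a legal return value and $\Psi(p,0^\mathbb{N})$ must be a staircase ending in $(n+1)^\mathbb{N}$; by continuity its first symbol $n+1$ is emitted after reading only finite prefixes $p[s]$ and $0^s$. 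Now let $p':=p[s]\concat 0^\mathbb{N}$. Then $\min A^{p'}_{n+1}=0$, the "good'' case of $\jjump{g_n}(e,i,p')$ holds, and both $\jjump{g_n}(e,i,p')$ and $g_{n+1}(p')$ equal $\{0^\mathbb{N}\}$; yet $\Psi(p',0^\mathbb{N})$ agrees with $\Psi(p,0^\mathbb{N})$ on a prefix containing $n+1$, a contradiction. So the design itself must change, not merely be completed. The paper's fix is to have $\Phi_i(p,q)$ output a strictly increasing sequence of \emph{positions} of $p$ witnessing the values of $q$ (so that convergence of $\Phi_i$ encodes exactly the relevant $\Pi^0_2$ facts), and to let the backward functional recompute the values $p(z(j))$ from those positions, checking "positions increase, values non-decreasing and $\le n$'' and defaulting to $n+1$ whenever the check fails; this re-verification against $p$ is what makes the $\Baire$ branch harmless in both cases.

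The other direction, $\jjump{g_n}\weireducible g_{n+1}$, is only gestured at in your last paragraph, and the appeal to \thref{thm:jump=w*f*w} does not supply the construction. What is needed (and what the paper does) is to compute, uniformly from $(e,i,x)$, strings $y_0,\dots,y_n$ such that $k$ occurs infinitely often in $y_k$ iff $\Phi_e(x)\downarrow$, the level-$k$ condition on $\Phi_e(x)$ holds (for $k<n$: $k\in A^{\Phi_e(x)}_n$; for $k=n$: $\Phi_e(x)(j)\ge n$ infinitely often), \emph{and} $\Phi_i(x,\cdot)$ converges on all staircases ending at $k$; one then feeds $y=\pairing{y_0,\dots,y_n}$ to $g_{n+1}$ and runs $\Phi_i(x,z)$ while $z(j)\le n$, switching to $0^\mathbb{N}$ upon seeing $n+1$. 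Your proposed bookkeeping ("one condition for $\Phi_e\downarrow$, $n$ for the subqueries, one, possibly merged, for $\Phi_i$'') leaves the merging, which is exactly where the work lies, unspecified: $g_{n+1}$ offers only $n+1$ serial $\Pi^0_2$-conditions, and the convergence of $\Phi_i$ must be folded into each level-$k$ condition restricted to staircases ending at $k$, otherwise the minimum returned by $g_{n+1}$ does not decode to a correct $\jjump{g_n}$-answer.
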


\begin{proof}
By induction on~$n$: The base case $n=1$ is \thref{thm:j(id)=x_pi}, as it is straightforward to see that $g_1\weiequiv \XPi$. For the induction step, it suffices to show that $\jjump{g_n} \weiequiv
g_{n+1}$.  	

Let us first prove that $\jjump{g_n}\weireducible g_{n+1}$. Let $(e,i,x)$ be
an input for $\jjump{g_n}$. For every $k < n$, we can uniformly compute~$y_k$
so that $\ran(y_k) \subseteq \{k, n+1\}$ and~$k$ occurs infinitely many times
in~$y_k$ if and only if
\[
\Phi_e(x){\downarrow} \land k \in A^{\Phi_e(x)}_n \land{}
   (\forall \sigma \in \mathbb{N}^k)\; \Phi_i(x,0^{\sigma(0)}1^{\sigma(1)}
   \hdots (k-1)^{\sigma(k-1)}k^\mathbb{N})\downarrow.
\]
This can be done because the displayed formula is uniformly~$\Pi^0_2$ in
$(e,i,x)$. Similarly, we can uniformly compute~$y_n$ so that $\ran(y_n)
\subseteq \{n, n+1\}$ and~$n$ occurs infinitely many times in~$y_n$ if and
only if
\[ \Phi_e(x){\downarrow} \land (\exists^\infty j)(\Phi_e(x)(j)\ge n)\land (\forall \sigma \in \mathbb{N}^n)\; \Phi_i(x,0^{\sigma(0)}1^{\sigma(1)}\hdots (n-1)^{\sigma(n-1)}n^\mathbb{N})\downarrow. \]
Let $y:=\pairing{y_0,\hdots, y_n}$. We now claim that a solution for
$\jjump{g_n}(e,i,x)$ can be uniformly computed from any $z \in g_{n+1}(y)$ as
follows: We compute $\Phi_i(x, z)$ as long as $z(j)\le n$. If $z(j)=n+1$ for some~$j$, we stop the computation and continue the output
with~$0^\mathbb{N}$.

Let us now show that this procedure correctly produces a solution for
$\jjump{g_n}(e,i,x)$.
\begin{itemize}[wide, itemsep=4pt, topsep=6pt]
\item
If $\Phi_e(x)\uparrow$, then for every $k\le n$,~$y_k$ only has finitely
many~$k$'s. This implies that $(\forall^\infty j)\; y(j)=n+1$, hence $n+1\in
\ran(z)$. The procedure produces an eventually null string, which is
clearly a valid solution as $\jjump{g_n}(e,i,x)=\Baire$.
\item
If $\Phi_e(x)\downarrow$ and $A^{\Phi_e(x)}_n = \emptyset$ then for every
$k<n$ and for almost all $j$, $y_k(j)=n+1$; moreover,~$n$ occurs infinitely
many times in~$y_n$ if and only if, for all $\sigma \in \mathbb{N}^n$,
$\Phi_i(x,0^{\sigma(0)}1^{\sigma(1)}\hdots
(n-1)^{\sigma(n-1)}n^\mathbb{N})\downarrow$. This, in turn, implies that if
$z \in g_{n+1}(y)$ and $n+1\notin\ran(z)$ then $\Phi_i(x,z)\downarrow \in
\jjump{g_n}(e,i,x)$, while otherwise $\jjump{g_n}(e,i,x)=\Baire$ and the
procedure computes an eventually null string.

\item
Finally, assume that $\Phi_e(x)\downarrow$ and $A^{\Phi_e(x)}_n\neq
\emptyset$. Let $k:=\min A^{\Phi_e(x)}_n$ and notice that for every $k'<k$
we have $(\forall^\infty j)\; y_{k'}(j)=n+1$.
Moreover, $y_k$ has infinitely
many~$k$ if and only if, for all $\sigma \in \mathbb{N}^k$, $\Phi_i(x,0^{\sigma(0)}1^{\sigma(1)}\hdots
(k-1)^{\sigma(k-1)}k^\mathbb{N})\downarrow$. If~$y_k$ has infinitely
many~$k$, then we can just run the computation $\Phi_i(x,z)$ for any $z \in
g_{n+1}(y)$. Otherwise, $\jjump{g_n}(e,i,x)=\Baire$ and the described
procedure is guaranteed to produce an infinite string (and therefore a
valid solution).
\end{itemize}	

Let us now show $g_{n+1}\weireducible \jjump{g_n}$. Intuitively, the
reduction works as follows: The forward functional maps an input~$p$ for
$g_{n+1}$ to $(e,i,p)$, where~$e$ is an index for the identity functional
and~$i$ is an index for the functional that, given $(p,q)$, tries to produce
a list of positions witnessing the fact that $q\in g_n(p)$. More precisely,
if~$q$ is of the form $0^{\sigma(0)}1^{\sigma(1)}\hdots
(k-1)^{\sigma(k-1)}k^\mathbb{N}$ for some $\sigma\in \baire$ and some $k\le
n$, then $\Phi_i(p,q)$ produces a strictly increasing string such that for
some strictly increasing sequence $\sequence{v_j}{j\in\mathbb{N}}$ and for
every~$j$,
\[
p(\Phi_i(p,q)(j)) = q(v_j).
\]
This can be done iteratively as follows: At stage~$0$, search for $u_0, v_0$
such that $p(u_0)=q(v_0)$. At stage $j+1$, we search for $u_{j+1}>u_j$ and
$v_{j+1}>v_j$ such that $p(u_{j+1})=q(v_{j+1})$. The sequence
$\sequence{u_j}{j\in\mathbb{N}}$ is the output of $\Phi_i(p,q)$.

If instead~$q$ is not of the form $0^{\sigma(0)}1^{\sigma(1)}\hdots
(k-1)^{\sigma(k-1)}k^\mathbb{N}$ for any $\sigma\in \baire$ and any $k\le n$,
we let $\Phi_i(p,q)\uparrow$.

Given $z\in \jjump{g_n}(e,i,p)$, the backward functional~$\Psi$ is defined as
$\Psi(p,z)(0) := \min\{ p(z(0)), n\}$ and $\Psi(p,z)(j+1):= p( z(j+1))$ if
$p( z(j)) \le p( z(j+1)) \le n$ and $z(j+1)>z(j)$, and $\Psi(p,z)(j+1):= n+1$
otherwise.

To conclude the proof we show that, for every $z\in \jjump{g_n}(e,i,p)$,
$\Psi(p,z)$ correctly produces a solution for $g_{n+1}(p)$. Observe that if
$A^p_{n+1}\neq \emptyset$,
then $g_{n+1}(p) =g_n(p)$. In particular, for every~$j$,
\[
\Psi(p,z)(j) = p(z(j)) = p(\Phi_i(p,q)(j)) = q(v_j)
\]
for some $q\in g_n(p)$ and some strictly increasing
$\sequence{v_j}{j\in\mathbb{N}}$, and hence $\Psi(p,z)$ is a correct solution for
$g_{n+1}(p)$. On the other hand, if $A^p_{n+1}= \emptyset$ then, for every $t\in g_n(p)$, $\Phi_i(p,t)\uparrow$. This implies that
$\Psi(p,z)$ is of the form $0^{\sigma(0)}1^{\sigma(1)}\hdots
n^{\sigma(n)}(n+1)^\mathbb{N}$ for some~$\sigma$, and therefore is a
valid solution for $g_{n+1}(p)$.
\end{proof}

We now show that the set $\{ \jjump{f} \st f \weireducible \id \}$ is a
proper subset of $\{ h \st \jjump{\emptyset} \weireducible h \weireducible
\jjump{\id}\}$.

\begin{theorem}\thlabel{thm:id<f<tJ(id)}
Let $f \strictlyweireducible \jjump{\id}$ and let $X:=\{ p\in\dom(f) \st p $ is computable$\}$. If\/ $\id \strictlyweireducible f\restrict{X}$, then there is no~$h$ such that $f\weiequiv \jjump{h}$.
\end{theorem}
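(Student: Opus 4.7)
The plan is to suppose for contradiction that $f \weiequiv \jjump{h}$ for some $h$ and derive a contradiction by a case split on whether $\dom(h)$ contains a computable point. As a first step, I will invoke the injectivity of $\jjumpop$ on the Weihrauch degrees (\thref{thm:jump_endomorphism}): from $\jjump{h} \weiequiv f \strictlyweireducible \jjump{\id}$ this yields $h \strictlyweireducible \id$. Now I split into two cases.

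In the first case, suppose $\dom(h)$ contains a computable point $p_0$. Then the constant forward functional $p \mapsto p_0$ together with the backward functional $(p,q) \mapsto p$ witnesses $\id \weireducible h$; combined with $h \weireducible \id$, this gives $h \weiequiv \id$, and hence $f \weiequiv \jjump{h} \weiequiv \jjump{\id}$, contradicting $f \strictlyweireducible \jjump{\id}$.

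In the second case, $\dom(h)$ contains no computable point. Then for every computable triple $(e,i,p)$, the value $\Phi_e(p)$ is computable and therefore not in $\dom(h)$, so $\jjump{h}(e,i,p)=\Baire$ by the very definition of the tot-jump. Letting $X'\subseteq \Baire$ denote the set of computable reals, this shows that $\jjump{h}\restrict{X'}$ is uniformly computable, i.e., $\jjump{h}\restrict{X'}\weireducible \id$. Since Weihrauch reductions are given by computable functionals, they send computable instances to computable instances, so the equivalence $f \weiequiv \jjump{h}$ restricts to $f\restrict{X}\weiequiv \jjump{h}\restrict{X'}$; combining these I obtain $f\restrict{X} \weireducible \id$, contradicting $\id \strictlyweireducible f\restrict{X}$.

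The key insight driving the whole argument is the observation used in the second case: the definition of $\jjump{h}$ produces the trivial value $\Baire$ on every computable input as soon as $\dom(h)$ avoids the computable reals. Once this is in place, the rest is essentially bookkeeping, and I do not foresee any genuine obstacle.
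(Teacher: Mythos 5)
Your proof is correct and follows essentially the same route as the paper: injectivity of $\jjumpop$ gives $h\strictlyweireducible\id$, and the heart of the argument is the observation that when $\dom(h)$ avoids the computable points, $\jjump{h}$ takes the value $\Baire$ on every computable input, so a reduction of $f\restrict{X}$ to $\jjump{h}$ collapses to $f\restrict{X}\weireducible\id$, contradicting $\id\strictlyweireducible f\restrict{X}$. The only difference is cosmetic: where the paper writes $h\weiequiv\id\restrict{A}$ (via the identification of the lower cone of $\id$ with the dual Medvedev degrees) to see that the relevant domain has no computable point, you reach the same conclusion by a direct case split on $\dom(h)$, which works just as well.
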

\begin{proof}
By \thref{thm:jump_endomorphism}, if $\jjump{h}\weiequiv f
\strictlyweireducible \jjump{\id} $ then $h\strictlyweireducible \id$, i.e.,
$h\weiequiv \id\restrict{A}$ for some  $A\subseteq \Baire$ (as the lower cone
of~$\id$ is isomorphic to the dual of the Medvedev degrees, see, e.g.,
\cite[Sec.\ 5]{HiguchiPauly13}). Notice that~$A$ (and therefore $\dom(h)$) does not have any computable point, as otherwise $h \weiequiv \id$. 	

Assume $f\restrict{X} \weireducible \jjump{\id\restrict{A}}$ via $\Phi, \Psi$. Let $p\in X$
be a computable input for~$f$ and let $\Phi(p)= (e,i,x)$. Observe
that, since~$A$ has no computable point, $\Phi_e(x)\notin A$, hence
$\jjump{\id\restrict{A}}(e,i,x)=\Baire$. This implies that a reduction $f\restrict{X}
\weireducible \jjump{\id\restrict{A}}$ would yield a reduction of $f\restrict{X}$ to the
(uniformly computable) map $p\mapsto \Baire$, contradicting
$f\restrict{X}\not\weireducible\id$.
\end{proof}

\begin{theorem}\thlabel{thm:LPO<=J(id)}
$\LPO\strictlyweireducible\jjump{\id} \strictlyweireducible
\parallelization{\jjump{\id}} \weiequiv \mflim$.
\end{theorem}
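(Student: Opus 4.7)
The plan is to leverage $\jjump{\id}\weiequiv\XPi$ from \thref{thm:j(id)=x_pi}, and handle the three nontrivial bounds of the chain in turn.

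For $\LPO\weireducible\jjump{\id}$, I would reduce $\LPO$ to $\XPi$ as follows: send an $\LPO$-input $p$ to $\hat p$ defined by $\hat p(n):=0$ if $p(k)\ne 0$ for some $k\le n$, and $\hat p(n):=1$ otherwise. Then $\hat p$ has infinitely many zeros iff $\LPO(p)=1$, so any $q\in\XPi(\hat p)$ satisfies $q=0^{\mathbb N}\iff\LPO(p)=1$. The backward functional, given $(p,q)$, searches in parallel for a nonzero entry in $p$ or in $q$ and outputs $1$ or $0$ depending on which is found first; exactly one of the two searches terminates.

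For the strict inequality $\jjump{\id}\not\weireducible\LPO$, suppose for contradiction that $\XPi\weireducible\LPO$ is witnessed by a forward $\Phi$ and backward $\Psi$, both necessarily total since $\XPi$ and $\LPO$ are total. Set
\[
A:=\{p : \Phi(p)=0^{\mathbb N}\},\qquad B_j:=\{p : \Psi(p,j)=0^{\mathbb N}\}\quad(j\in\{0,1\}),
\]
each of which is $\Pi^0_1$ in $p$. Correctness of the reduction yields
\[
\{p : p\text{ has infinitely many zeros}\}=(A\cap B_0)\cup(A^c\cap B_1),
\]
which is $F_\sigma$ in $\Baire$ (basic open sets in $\Baire$ are clopen, so $A^c$ is $F_\sigma$, and the whole expression is then a union of two $F_\sigma$ sets). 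But by Baire category this $\Pi^0_2$-set cannot be $F_\sigma$: any closed subset must have empty interior, since every cylinder $[\sigma]$ contains the fin-zero element $\sigma\cdot 1^{\mathbb N}$, so an $F_\sigma$ representation would make the set meager, whereas the set of infinite-zero sequences is comeager in $\Baire$ (its complement is a countable union of closed nowhere-dense sets).

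Finally, for $\parallelization{\jjump{\id}}\weiequiv\mflim$, I would use $\mflim\strongweiequiv\parallelization{\LPO}$. Parallelizing the first step gives $\mflim\weireducible\parallelization{\jjump{\id}}$. Conversely, $\XPi\weireducible\mflim$: for each $n$ query $\LPO$ on the input $r_n$ with $r_n(k)=1$ if $k\ge n$ and $p(k)=0$, and $r_n(k)=0$ otherwise, obtaining bits $b_n$, and put $q(n):=1-b_n$; then $q=0^{\mathbb N}$ iff $p$ has infinitely many zeros. Parallelizing and applying the idempotence $\parallelization{\mflim}\weiequiv\mflim$ yields $\parallelization{\jjump{\id}}\weireducible\mflim$. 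The remaining strictness $\jjump{\id}\strictlyweireducible\parallelization{\jjump{\id}}$ is then immediate: $\mflim$ is a cylinder while no tot-jump is (an observation following \thref{thm:jump=max_tot}), so the two degrees cannot coincide. The principal obstacle is the middle step, where the descriptive-set-theoretic decomposition is needed to rule out that a single $\LPO$ call (together with access to the input) suffices to recover the $\Pi^0_2$-complete information computed by $\XPi$.
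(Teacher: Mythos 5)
Most of your chain tracks the paper's proof: the reduction $\LPO\weireducible\jjump{\id}$ through $\XPi$ and the equivalence $\parallelization{\jjump{\id}}\weiequiv\mflim$ (via $\parallelization{\LPO}\weiequiv\mflim$, $\XPi\weireducible\mflim$, and idempotence of parallelization) are essentially the paper's arguments. Your proof of $\jjump{\id}\not\weireducible\LPO$ is a genuinely different, self-contained route: the paper quotes Neumann--Pauly's $\mathsf{isFinite}_\Sier\not\weireducible\TCN$, while you argue directly that a reduction would render $\{p\in\Baire \st (\exists^\infty n)\, p(n)=0\}$ an $F_\sigma$ set, contradicting Baire category; this is correct in substance and avoids the external citation. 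One claim there needs repair, though: $\Psi$ is \emph{not} ``necessarily total'' --- a Weihrauch reduction only guarantees convergence of $\Psi(p,j)$ when $j$ is the correct answer for $\Phi(p)$ --- so the sets $B_j$ need not be $\Pi^0_1$. The conclusion survives: on the closed set $A$ the map $\Psi(\cdot,0)$ converges, so $A\cap B_0$ is closed, and on the open set $A^c$ the map $\Psi(\cdot,1)$ converges, so $A^c\cap B_1$ is the intersection of an open (hence $F_\sigma$) set with a closed set; the union is still $F_\sigma$.

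The genuine gap is the final strictness, i.e., $\mflim\not\weireducible\jjump{\id}$. Being a cylinder is a property of a specific representative, not of a Weihrauch degree: a degree can contain both cylinders and non-cylinders (for instance, $\LPO$ is not a cylinder, but $\id\times\LPO$ is, and $\id\times\LPO\weiequiv\LPO$). The paper's observation that $\jjump{f}$ is never a cylinder is about the particular problem $\jjump{f}$, so it is perfectly compatible with $\jjump{\id}\weiequiv\mflim$; ``$\mflim$ is a cylinder while $\jjump{\id}$ is not, hence the degrees differ'' is a non sequitur, and this step of your argument does not go through. The paper closes it by a computable-instance argument instead: every computable instance of $\jjump{\id}\weiequiv\XPi\weiequiv\totalization{(\W)}$ has a computable solution (for a computable $p$, one of $0^\mathbb{N}$ or $1^\mathbb{N}$ lies in $\XPi(p)$, non-uniformly), whereas $\mflim$ has a computable instance --- a computable sequence converging to a noncomputable point --- whose only solution is noncomputable; since a Weihrauch reduction sends computable instances to computable instances and produces computable solutions from computable ones, $\mflim\not\weireducible\jjump{\id}$. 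Replacing your cylinder argument with this (or any other degree-invariant separation) completes the proof.
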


\begin{proof}

   Let us first show that $\LPO\strictlyweireducible\jjump{\id}$. For the reduction, let $p \in \Baire$ be an input for~$\LPO$. By \thref{thm:j(id)=x_pi}, we can
use~$\jjump{\id}$ to compute $q\in \Baire$ such that $(\forall n)\; q(n)=0$ if and only if 
$(\exists n)\; p(n)>0$. It is then straightforward to see that we can find
an answer for $\LPO(p)$ by unbounded search (either there is a non-zero
element in~$q$ or a non-zero element in~$p$). To prove that the reduction is strict, recall that $\jjump{\id}\weiequiv \XPi\weiequiv \mathsf{isFinite}_\Sier$ (\thref{thm:j(id)=x_pi}). The reduction $\jjump{\id}\weireducible \LPO$ would contradict $\mathsf{isFinite}_\Sier\not\weireducible\TCN$ (\cite[Prop.\ 24(5)]{NP18}).

The second reduction is immediate and the reduction $\mflim \weireducible
\parallelization{\jjump{\id}}$ follows from the fact that
$\parallelization{\LPO} \weiequiv \mflim$. 

Given that~$\mflim$ is parallelizable, to prove that $\parallelization{\jjump{\id}} \weireducible \mflim$ it suffices to show that $\jjump{\id} \weireducible \mflim$.
To this end, we prove that $\XPi\weireducible \mflim$, and the claim will
follow from \thref{thm:j(id)=x_pi}. Given $p\in \Baire$, we can uniformly
compute the converging sequence $\sequence{q_n}{n\in\mathbb{N}}$ defined as
$q_n(m):=0$ if there is~$j$ such that $m \leq j \leq n$ such that $p(j)=0$,
and $q_n(m):=1$ otherwise. Clearly, for each~$m$, $\lim_{n\to\infty} q_n(m)=
0$ if and only if there is some~$0$ in~$p$ after position~$m$.
Therefore, $\lim_{n\to\infty} q_n \in \XPi(p)$.

Finally the fact that $\jjump{\id} \strictlyweireducible
\parallelization{\jjump{\id}}$ follows from the fact that every computable input for $\jjump{\id} \weiequiv \totalization{(\W)}$ has a computable solution, while this is not the case for $\mflim$.
\end{proof}

Given that $\DIS\weireducible \LPO$, combining \thref{thm:id<f<tJ(id)} and \thref{thm:LPO<=J(id)} we obtain:

\begin{corollary}\thlabel{cor:DIS_notin_ran(tJ)}
   The problems~$\DIS$ and~$\LPO$ are not Weihrauch-equivalent to any problem in the range of~$\jjumpop$. \qed
\end{corollary}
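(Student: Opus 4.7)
The plan is to apply \thref{thm:id<f<tJ(id)} to each of $f=\LPO$ and $f=\DIS$ in turn. For each such $f$ I need to verify the two hypotheses of that theorem: that $f\strictlyweireducible\jjump{\id}$, and that $\id\strictlyweireducible f\restrict{X}$ where $X$ consists of the computable elements of $\dom(f)$. In both cases $\dom(f)=\Baire$, so $X$ is the set of all computable points of Baire space, and the reduction $\id\weireducible f\restrict{X}$ is trivial: the forward functional sends every input to a fixed computable point of $X$, while the backward functional simply returns its first (Weihrauch) argument.

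For $f=\LPO$, the first hypothesis $\LPO\strictlyweireducible\jjump{\id}$ is immediate from \thref{thm:LPO<=J(id)}. For the strict part of $\id\strictlyweireducible\LPO\restrict{X}$, I would observe that $\LPO\restrict{X}$ decides the halting problem: given an index $e$, the function $p_e$ defined by $p_e(n)=1$ if $\Phi_e(e)$ halts in at most $n$ steps and $p_e(n)=0$ otherwise is uniformly computable from $e$, and $\LPO(p_e)=1$ iff $\Phi_e(e)\downarrow$. Hence $\LPO\restrict{X}\not\weireducible\id$, so \thref{thm:id<f<tJ(id)} yields $\LPO\notin\ran(\jjumpop)$.

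For $f=\DIS$, combining $\DIS\weireducible\LPO$ (mentioned in the paragraph preceding \thref{thm:dis_x_g}) with \thref{thm:LPO<=J(id)} gives $\DIS\weireducible\jjump{\id}$, and strictness follows because a reduction $\jjump{\id}\weireducible\DIS$ would chain with $\DIS\weireducible\LPO$ to contradict $\LPO\strictlyweireducible\jjump{\id}$. For the strict part of $\id\strictlyweireducible\DIS\restrict{X}$ I would use a recursion-theorem diagonalization: assume $\Phi,\Psi$ witness $\DIS\restrict{X}\weireducible\id$, so that $\Psi(p,\Phi(p))\in\DIS(p)$ for every computable $p$; by the recursion theorem there is a computable $p$ with $\mathsf{U}(p)=\Psi(p,\Phi(p))$, which gives the contradiction $\Psi(p,\Phi(p))\notin\DIS(p)$. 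Applying \thref{thm:id<f<tJ(id)} a second time then yields $\DIS\notin\ran(\jjumpop)$.

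No step is really an obstacle; the whole argument is bookkeeping on top of \thref{thm:id<f<tJ(id)} and \thref{thm:LPO<=J(id)}, together with two standard facts (the halting-problem encoding for $\LPO$ and the recursion-theorem diagonalization for $\DIS$). The only subtlety worth flagging is that \thref{thm:id<f<tJ(id)} requires strict, not merely weak, reducibility to $\jjump{\id}$, which is why the separation of $\DIS$ from $\jjump{\id}$ is proved by routing through $\LPO$ rather than invoked directly.
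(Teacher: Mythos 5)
Your proposal is correct and takes essentially the same route as the paper, which obtains the corollary precisely by combining $\DIS\weireducible\LPO$, \thref{thm:LPO<=J(id)}, and \thref{thm:id<f<tJ(id)}. Your verifications of the hypotheses on the restrictions to computable inputs (the halting-problem encoding for $\LPO$ and the recursion-theorem diagonalization for $\DIS$) simply spell out details the paper leaves implicit, and both are sound.
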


Since we showed that $\DIS$ is total, non Baire co-total, and join-irreducible, we also obtain the following corollary:

\begin{corollary}\thlabel{thm:j_not_onto_total_join-irred}
   The map $\jjumpop$ does not induce a surjective operator onto the total, non Baire co-total, join-irreducible degrees. \qed
\end{corollary}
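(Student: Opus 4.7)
The plan is to exhibit $\DIS$ as an explicit counterexample, since all the ingredients have already been assembled in the preceding results. Concretely, I would verify the three membership conditions for $\DIS$ in the target class and then invoke the non-surjectivity of $\jjumpop$ at $\DIS$.

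First, totality of $\DIS$ is immediate from its defining formula: for every $p\in\Baire$, $\DIS(p)$ is either $\Baire$ (when $\mathsf{U}(p)\uparrow$) or $\Baire\setminus\{\mathsf{U}(p)\}$, so in particular $\dom(\DIS)=\Baire$. Second, join-irreducibility of $\DIS$ is exactly \thref{thm:dis_join_irreducible}. Third, $\DIS$ is not Baire co-total: this is the content of the proposition immediately preceding \thref{thm:dis_join_irreducible}, where it was shown that $\DIS\weiequiv\totalization{(\DIS\restrict{X})}$ while $\DIS\not\weireducible\DIS\restrict{X}$ (since $\DIS\restrict{X}\weireducible\id$).

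With these three facts in hand, $\DIS$ belongs to the class of total, non Baire co-total, join-irreducible Weihrauch degrees. On the other hand, \thref{cor:DIS_notin_ran(tJ)} asserts that $\DIS\not\in\ran(\jjumpop)$ (up to Weihrauch equivalence), so the restriction of $\jjumpop$ to this class cannot be onto. The only subtlety worth stating explicitly is that each of the three properties is invariant under Weihrauch equivalence (totality is not literally preserved under $\weiequiv$, but being Weihrauch equivalent to a total problem is exactly what is relevant for the statement about the degrees), so the failure of surjectivity happens at the level of degrees. No step here is a real obstacle since the heavy lifting has already been done; the corollary is essentially a bookkeeping assembly of \thref{thm:dis_join_irreducible}, the non-Baire-co-totality of $\DIS$, and \thref{cor:DIS_notin_ran(tJ)}.
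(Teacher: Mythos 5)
Your proof is correct and matches the paper's argument exactly: the paper also takes $\DIS$ as the witness, combining its totality, the proposition that $\DIS$ is not Baire co-total, \thref{thm:dis_join_irreducible}, and \thref{cor:DIS_notin_ran(tJ)}. The only trivial slip is calling the non-Baire-co-totality result the proposition ``immediately preceding'' \thref{thm:dis_join_irreducible} (a corollary on $\DIS\times\DIS$ sits between them), which does not affect the argument.
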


Next we show that no lower cone is closed under tot-jump, i.e., there are no tot-jump principal ideals in the Weihrauch lattice.

\begin{theorem}
For every $g\neq\emptyset$, there exists an $f \strictlyweireducible g$ such
that $\jjump{f}\not\weireducible g$.
\end{theorem}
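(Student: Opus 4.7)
The plan is case analysis on $g$'s position relative to $\id$. If $\id\not\weireducible g$ (equivalently, $\dom(g)$ has no computable element, since a total computable forward functional witnessing $\id\weireducible g$ would, when applied to $0^\mathbb{N}$, produce a computable point of $\dom(g)$), take $f=\emptyset$: then $\emptyset\strictlyweireducible g$ and by \thref{thm:J(0)=id}, $\jjump{\emptyset}\weiequiv\id\not\weireducible g$. If $g\weiequiv\id$, take $f:=\id\restrict{\{A\}}$ for a non-computable $A$: the lower cone of $\id$ being anti-isomorphic to the Medvedev degrees gives $f\strictlyweireducible\id\weiequiv g$, while a reduction $\jjump{f}\weireducible\id$ would make $\jjump{f}$ total and uniformly computable, hence $\jjump{f}\weiequiv\id\weiequiv\jjump{\emptyset}$, whence the injectivity of $\jjumpop$ (\thref{thm:jump_endomorphism}) forces $f\weiequiv\emptyset$, contradicting $\dom(f)\neq\emptyset$.

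In the remaining case $\id\strictlyweireducible g$, I first dispose of the situation in which $g$ is not join-irreducible: write $g\weiequiv g_1\sqcup g_2$ with $g_1,g_2\strictlyweireducible g$. By \thref{thm:total_and_join_irreducible} each $\jjump{g_i}$ is join-irreducible, hence prime in the distributive Weihrauch lattice, so $\jjump{g_i}\weireducible g=g_1\sqcup g_2$ forces $\jjump{g_i}\weireducible g_1$ or $\jjump{g_i}\weireducible g_2$; the option $\jjump{g_i}\weireducible g_i$ is ruled out by strictness of the jump. If both $\jjump{g_1},\jjump{g_2}\weireducible g$, then chaining $g_i\strictlyweireducible\jjump{g_i}\weireducible g_{3-i}$ for $i\in\{1,2\}$ gives $g_1\weiequiv g_2$ and hence $g_1\strictlyweireducible g_1$, a contradiction; so some $\jjump{g_i}\not\weireducible g$ and $f:=g_i$ works.

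If $g$ is join-irreducible and lies in $\ran(\jjumpop)$, write $g\weiequiv\jjump{h_0}$. Then $h_0\strictlyweireducible g$ by strictness of the jump, and join-irreducibility of $g$ together with $h_0,\id\strictlyweireducible g$ forces $h_0\sqcup\id\strictlyweireducible g$ (otherwise $g\weireducible h_0$ or $g\weireducible\id$). Density of the Weihrauch degrees above $\id$ (see \cite{LMPSV23}) yields $f$ with $h_0\sqcup\id\strictlyweireducible f\strictlyweireducible g$; a short case split on whether $\id\weireducible h_0$ shows $h_0\strictlyweireducible f$, so $f\not\weireducible h_0$, and injectivity of $\jjumpop$ then delivers $\jjump{f}\not\weireducible\jjump{h_0}\weiequiv g$.

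The main obstacle is the last sub-case, where $g$ is join-irreducible but $g\notin\ran(\jjumpop)$, so injectivity provides no predecessor $h_0$ to feed the previous argument. My plan here is to iterate density above $\id$: start with some $h$ such that $\id\strictlyweireducible h\strictlyweireducible g$, and observe that since $g\notin\ran(\jjumpop)$, either $\jjump{h}\not\weireducible g$ (and we take $f:=h$) or $\jjump{h}\strictlyweireducible g$, in which case we iterate in the interval $(\jjump{h},g)$. Ruling out the bad scenario of an infinite strictly increasing chain $h\strictlyweireducible\jjump{h}\strictlyweireducible\jjumpn{h}{2}\strictlyweireducible\cdots\weireducible g$ is the heart of the proof; I would attack this either by parallelizing the chain inside the Weihrauch lattice (hoping the tot-jump of the parallelized supremum exits the lower cone of $g$) or by a direct recursion-theoretic diagonalization constructing $f$ that defeats every potential reduction $\jjump{f}\weireducible g$.
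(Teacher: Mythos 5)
Your first three cases are fine: $\id\not\weireducible g$ via $\jjump{\emptyset}\weiequiv\id$, the case $g\weiequiv\id$ via injectivity, the non-join-irreducible case (join-irreducible does imply join-prime in a distributive lattice, so that step is legitimate), and the case $g\weiequiv\jjump{h_0}$ via density above $\id$ plus injectivity. But the theorem is not proved, because the remaining sub-case --- $\id\strictlyweireducible g$, $g$ join-irreducible, $g\notin\ran(\jjumpop)$ --- is exactly where the real content lies, and there you only offer a plan. Note that this case is far from vacuous: it contains, e.g., $\DIS$ and $\TCN$, which the paper shows are total, join-irreducible, and not in the range of $\jjumpop$. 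Your iteration idea does not close it: from $\id\strictlyweireducible h\strictlyweireducible g$ you either win or get $\jjump{h}\strictlyweireducible g$ and recurse, and nothing rules out that this recursion runs forever. The chain $h\strictlyweireducible\jjump{h}\strictlyweireducible\jjumpn{h}{2}\strictlyweireducible\cdots$ need not have a supremum in the Weihrauch lattice (countable suprema do not exist in general), parallelization is not a supremum and need not stay below $g$, and even an upper bound $u\weireducible g$ of the chain would give you no leverage on whether $\jjump{u}\weireducible g$. So the ``bad scenario'' you identify as the heart of the proof is left entirely open.

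The paper's argument in the regime $\id\strictlyweireducible g$ is organized differently and is worth comparing with your sketch. It splits on whether $g$ is Weihrauch equivalent to a problem with finite domain: if so, a continuity/isolated-point argument shows $\LPO\not\weireducible g$, and then $f:=\id$ works because $\LPO\strictlyweireducible\jjump{\id}$; if not, one builds $f$ directly as a ``scrambled'' copy of $g$, namely $f(x,\xi(x)):=g(x)$ for a partial function $\xi$ constructed in stages. Odd stages defeat each potential reduction $g\weireducible f$ via $\Phi_e,\Phi_i$ (this is where ``not equivalent to a finite-domain problem'' is used), and even stages defeat each potential reduction $\jjump{f}\weireducible g$: the key trick, which is what your final sentence gestures at but does not supply, is that any finite stage of $\xi$ extends to a total computable $\tilde\xi$, so the corresponding $\tilde f$ is Weihrauch equivalent to $g$, hence $\jjump{\tilde f}\weiequiv\jjump{g}\not\weireducible g$; a witness to this non-reduction depends on only one value $(\tilde z,\tilde n)$ of $\tilde\xi$, which can then be frozen into $\xi$ so that every later extension still defeats the same pair of functionals. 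If you want to complete your proof along your own lines, you need some such construction for the open sub-case; as it stands, the proposal has a genuine gap.
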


\begin{proof}
If $\id \not\weireducible g$, then $f := \emptyset$ has the desired
properties, while if $g \weiequiv \id$ we can set $f := \id \restrict{X}$ for
any~$X$ without computable elements (in this case, $\emptyset
\strictlyweireducible f \strictlyweireducible \id$ and we can use
\thref{thm:jump_endomorphism}).	We can now assume that $\id \strictlyweireducible
g$ and distinguish two cases.

The first one is when there exists~$g_0$ with finite domain such that $g
\weiequiv g_0$. In this case, we claim that $\LPO\not\weireducible g$. Granting
the claim, $f := \id$ has the desired property by \thref{thm:LPO<=J(id)}. To
prove the claim, assume that~$\Phi$ and~$\Psi$ witness $\LPO \weireducible
g_0$. Then, since every point in $\dom(g_0)$ is isolated, there are $z\in
\dom(g_0)$ and~$k_0$ such that for every $x \extends 0^{k_0}$, $\Phi(x) = z$.
Besides, there is $k_1 \geq k_0$ such that for some $y\in g_0(z)$,
$\Psi(0^{k_1},y)(0)\downarrow = 0$. In particular, the string
$0^{k_1}1^\mathbb{N}$ witnesses the fact that~$\Phi$ and~$\Psi$ do not
witness the Weihrauch reduction.

Assume now that~$g$ is not Weihrauch equivalent to any problem with finite
domain. We want to define~$f$ such that $f\strictlyweireducible g$ and
$\jjump{f}\not\weireducible g$. To this end, we define a
\emph{scrambling function} $\xi\pfunction{\dom(g)}{\mathbb{N}}$. The desired
$f$ will be defined as $f(x,\xi(x)):= g(x)$, with $\dom(f):=\{
(x,n)\in\dom(g)\times \mathbb{N}\st \xi(x)=n \}$. Notice that, no matter
which~$\xi$ we choose, $f\weireducible g$.

To define~$\xi$ we define a sequence $\sequence{\xi_s}{s\in\mathbb{N}}$ of
functions with finite domain, starting with $\xi_0:=\emptyset$.

At stage $s+1=2\coding{e,i}+1$, we satisfy the requirement
``$g\not\weireducible f$ via $\Phi_e,\Phi_i$''. To do so, we choose (in a
noneffective way) some $x\in \dom(g)$ such that one of the following
conditions holds:
\begin{itemize}
\item
$\Phi_e(x)\uparrow$;
\item
$\Phi_e(x)$ produces the pair $(y,n)$ with $y\notin \dom(g)$ or
$\xi_s(y)\downarrow \neq n$;
\item
$\Phi_e(x)$ produces the pair $(y,n)$ and there is $q \in g(y)$ such
that $\Phi_i(x,q)\notin g(x)$;
\item
$\Phi_e(x)$ produces the pair $(y,n)$ with $\xi_s(y)\uparrow$.
\end{itemize}
Such an~$x$ must exist, as otherwise~$\Phi_e$ and~$\Phi_i$ witness $g
\weiequiv f_s$, where $f_s(x,\xi(x)):=g(x)$ with $\dom(f_s):= \{(x,n) \st
\xi_s(x) \downarrow = n \}$, contradicting the fact that~$g$ is not Weihrauch
equivalent to any problem with finite domain. In the first three cases, there
is nothing to do, and we just define $\xi_{s+1}:=\xi_s$. In the last case, we
let $\xi_{s+1}:=\xi_s \cup \{ (y,n+1)\}$.

At stage $s+1=2\coding{e,i}+2$, we satisfy the requirement
``$\jjump{f}\not\weireducible g$ via $\Phi_e,\Phi_i$''. Let $\tilde{\xi}$ be
an arbitrary computable extension of~$\xi_s$ and let
$\tilde{f}:=(x,\tilde{\xi}(x)) \mapsto g(x)$. Clearly $\tilde{f}\weiequiv g$,
hence $\jjump{\tilde{f}}\weiequiv \jjump{g} \not \weireducible g$. In
particular, there are $\tilde{e},\tilde{\imath},\tilde{x}$ witnessing
$\jjump{\tilde{f}}\not \weireducible g$ via $\Phi_e, \Phi_i$.

If $\Phi_e(\tilde{e},\tilde{\imath},\tilde{x})\notin\dom(g)$ or if there is $q\in g \Phi_e(\tilde{e},\tilde{\imath},\tilde{x})$ such that $\Phi_i((\tilde{e},\tilde{\imath},\tilde{x}),q)\uparrow$ then there is nothing to do and we can define $\xi_{s+1}:=\xi_s$. Otherwise, the fact that $(\tilde{e},\tilde{\imath},\tilde{x})$ witnesses $\jjump{\tilde{f}}\not\weireducible g$ means that there is $q\in g\Phi_e(\tilde{e},\tilde{\imath},\tilde{x})$ such that $\Phi_i((\tilde{e},\tilde{\imath},\tilde{x}),q)\downarrow\notin\jjump{\tilde{f}}(\tilde{e},\tilde{\imath},\tilde{x})$. In particular, this implies that $\jjump{\tilde{f}}(\tilde{e},\tilde{\imath},\tilde{x})\neq \Baire$, i.e.\ 
\begin{itemize}
   \item $\Phi_{\tilde{e}}(\tilde{x})\downarrow =: (\tilde{z},\tilde{n})$
   \item for all $r\in \tilde{f}(\tilde{z},\tilde{n})$, $\Phi_{\tilde{\imath}}(\tilde{x},r)\downarrow$
   \item $\Phi_i((\tilde{e},\tilde{\imath},\tilde{x}),q)\notin \jjump{\tilde{f}}(\tilde{e},\tilde{\imath},\tilde{x}) = \{\Phi_{\tilde{\imath}}(\tilde{x},r) \st r\in \tilde{f}(\tilde{z},\tilde{n}) \}$.
\end{itemize}
We define $\xi_{s+1}:=\xi_s \cup \{ (\tilde{z},\tilde{n}) \}$.  Observe that, if $\tilde{z} \in\dom(\xi_s)$ then $\xi_s(\tilde{z})= \tilde{n}$, so $\xi_{s+1}$. This is enough to satisfy the current requirement: indeed, let $\zeta$ be any extension of~$\xi_{s+1}$ and let $h:=(x,\zeta(x)) \mapsto g(x)$. In particular, $h(\tilde{z},\tilde{n})=\tilde{f}(\tilde{z},\tilde{n})$, hence the triple $(\tilde{e},\tilde{\imath},\tilde{x})$ witnesses $\jjump{h}\not\weireducible g$ via $\Phi_e,\Phi_i$. 

The desired scrambling function is the map $\xi := \bigcup_{s\in\mathbb{N}}
\xi_s$. Since all the requirements are satisfied, the above-defined
function~$f$ satisfies $f\strictlyweireducible g$ and $\jjump{f}\not
\weireducible g$, which concludes the proof.
\end{proof}

In light of \thref{thm:LPO<=J(id)}, a natural question is how $\jjump{\id}$
compares with~$\CNatural$ (as $\mflim \weiequiv
\parallelization{\CNatural})$. By \thref{thm:co-total_char}, as
$\CNatural$ is co-total, $\CNatural \weireducible \jjump{\id}$ would
imply that $\CNatural \weireducible \id$, which is a contradiction. On the
other hand, $\jjump{\id}\not\weireducible \CNatural$. In fact, we have a slightly stronger result (as $\CNatural \strictlyweireducible \TCN$ by \cite[Prop.\ 24]{NP18}).

\begin{proposition}\thlabel{thm:j(id)not<=TCN}
$\jjump{\id} \not\weireducible \TCN$.
\end{proposition}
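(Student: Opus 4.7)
The plan is to reduce the non-reducibility to a separation result already available in the literature, by chaining the equivalences proved earlier in Section~\ref{sec:jump_of_problems}.

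First I would invoke \thref{thm:j(id)=x_pi}, which gives $\jjump{\id}\weiequiv \XPi$. Then I would recall the remark made immediately after the definition of $\XPi$: the problem $\mathsf{isFinite}_\Sier$ of Neumann and Pauly satisfies $\XPi\weiequiv \mathsf{isFinite}_\Sier$ (the equivalence is essentially a complement swap between the $\exists^\infty$ and $\forall$ clauses). Thus the question reduces to separating $\mathsf{isFinite}_\Sier$ from $\TCN$.

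The separation $\mathsf{isFinite}_\Sier\not\weireducible \TCN$ is precisely the content of \cite[Prop.\ 24(5)]{NP18} (and has already been cited in the proof of \thref{thm:LPO<=J(id)}). Combining the three links, $\jjump{\id}\weireducible \TCN$ would force $\mathsf{isFinite}_\Sier\weireducible\TCN$, contradicting the Neumann--Pauly result. So the proof is a one-line citation, and no new combinatorial content is needed beyond what the paper has already set up.

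The only subtlety is a bookkeeping one: making clear that the Neumann--Pauly result is exactly strong enough for this statement, as opposed to the weaker separation from $\CNatural$ (or $\LPO$) that one might first try to derive from \thref{thm:co-total_char}. Since $\CNatural\strictlyweireducible \TCN$, co-totality of $\CNatural$ alone does not rule out a reduction to $\TCN$, which is why the topological/game-theoretic argument of Neumann and Pauly is genuinely needed; I would flag this remark in the write-up so the reader sees why we must appeal to \cite{NP18} rather than to the co-totality machinery developed in Section~\ref{sec:tot-jump}.
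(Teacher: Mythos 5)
Your proposal is correct and is essentially identical to the paper's own argument: it chains $\jjump{\id}\weiequiv \XPi\weiequiv \mathsf{isFinite}_\Sier$ (via \thref{thm:j(id)=x_pi} and the remark following the definition of $\XPi$) with the Neumann--Pauly separation $\mathsf{isFinite}_\Sier\not\weireducible\TCN$ from \cite[Prop.\ 24(5)]{NP18}. No gap; the extra remark on why co-totality of $\CNatural$ would not suffice is accurate but not needed for the proof itself.
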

\begin{proof}
We use the fact that $\jjump{\id}\weiequiv \XPi$ (\thref{thm:j(id)=x_pi}). As mentioned above, $\XPi \weiequiv \mathsf{isFinite}_\Sier$. But Neumann and Pauly proved that $\mathsf{isFinite}_\Sier\not\weireducible\TCN$ \cite[Prop.\ 24(5)]{NP18}.
\end{proof}

The following result is folklore.

\begin{lemma}\thlabel{thm:TCN_fractal}
$\TCN$ is a \emph{fractal}, i.e., for every $\sigma\in\baire$,~$\TCN$ is Weihrauch
equivalent to its restriction to $X_\sigma:=\{ p \in\Baire \st \sigma
\pprefix p \}$.
\end{lemma}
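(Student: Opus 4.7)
The plan is to split the equivalence into two reductions. The direction $\TCN\restrict{X_\sigma}\weireducible\TCN$ is immediate, witnessed by the identity functionals. So the entire content lies in producing a reduction $\TCN\weireducible \TCN\restrict{X_\sigma}$, and I would do this with an explicit shift-and-pad encoding that forces the input to begin with $\sigma$ while preserving the structure of the solution set.

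Concretely, assuming $\sigma\neq \str{}$ (the empty case is trivial as $X_\sigma=\Baire$), I would set $M:=1+\max(\ran(\sigma))$. The forward functional will map an input $p\in\Baire$ for $\TCN$ to
\[
q := \sigma \concat \str{1,2,\hdots,M} \concat \bigl(k\mapsto p(k)+M\bigr),
\]
which by construction lies in $X_\sigma$. The middle block $\str{1,2,\hdots,M}$ ensures $\{1,\hdots,M\}\subseteq\ran(q)$, which excludes every $n<M$ from the candidate solution set $A_q:=\{n\st n+1\notin\ran(q)\}$, while the shift $k\mapsto p(k)+M$ keeps all information contributed by $p$ in the range $\{M,M+1,\hdots\}$. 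A routine calculation should then give $A_q = \{m+M \st m+1\notin\ran(p)\}$, i.e., $A_q$ is exactly the set $A_p$ shifted by $M$. The backward functional, on an output $n$ of $\TCN(q)$, will return $n-M$ if $n\ge M$ and $0$ otherwise.

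The remaining step is to check correctness in both regimes. If $p\in\dom(\CNatural)$, then by the identity above $\TCN(q)=\CNatural(q)\subseteq\{M,M+1,\hdots\}$, so the backward functional always sees $n\ge M$ and produces $n-M\in\CNatural(p)=\TCN(p)$. If $p\notin\dom(\CNatural)$, then $A_p=\emptyset$, hence also $A_q=\emptyset$, so $\TCN(p)=\mathbb{N}$ absorbs whatever output the backward functional produces. The one subtle point to verify is that the padding $\str{1,\hdots,M}$ truly pushes all genuine solutions of $\CNatural(q)$ above $M$; this is exactly why I prepend the values $1,\hdots,M$ (rather than, say, $0,\hdots,M-1$), so that no candidate $n < M$ survives in $A_q$ regardless of how $\sigma$ and $p$ interact.
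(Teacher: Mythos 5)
Your proof is correct and follows essentially the same route as the paper: prepend $\sigma$, pad with an initial block $\str{1,\hdots,M}$ to kill all candidate solutions below the shift amount, shift $p$ upward, and subtract in the backward functional (the paper uses $m=\max\ran(\sigma)$ where you use $M=m+1$, an inessential difference). Your verification that $A_q=A_p+M$ and the treatment of the ``otherwise'' case match the intended argument.
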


\begin{proof}
Fix $\sigma \in \baire$. To prove that $\TCN \weireducible
\TCN\restrict{X_\sigma}$, let $m:= \max \ran(\sigma)$. We can uniformly map
$p\in \Baire$ to
\[
q:= \sigma\concat \str{1,\hdots, m}\concat \str{p(0)+m,
p(1)+m,\hdots}.
\]
Clearly, $q\in X_\sigma$ and for every $n\in \TCN(q)$, $\max \{n-m, 0 \}\in \TCN(p)$.
\end{proof}

\begin{theorem}\thlabel{thm:tcn_notin_ran(j)}
There is no~$f$ such that $\TCN \weiequiv \jjump{f}$.
\end{theorem}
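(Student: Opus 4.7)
The plan is to argue by contradiction, combining Baire co-totality of $\CNatural$, the monotonicity of $\jjumpop$, and the earlier bound $\jjump{\id}\not\weireducible\TCN$ (\thref{thm:j(id)not<=TCN}). Suppose $\TCN\weiequiv\jjump{h}$ for some $h$.

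First, recall that $\CNatural$ is co-total (as cited just before \thref{thm:CN_notin_ran(tj)}) and in particular Baire co-total. Since $\CNatural\weireducible\TCN\weiequiv\jjump{h}$, \thref{thm:co-total_char} yields $\CNatural\weireducible h$. Intuitively, Baire co-totality of $\CNatural$ says that tot-jumping does not help to compute $\CNatural$, so its reducibility to $\jjump{h}$ already forces $\CNatural\weireducible h$.

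Next, $\id\weireducible\CNatural$ trivially (send any $p$ to a fixed $q\in\dom(\CNatural)$, e.g.\ $q=0^\mathbb{N}$, and recover $p$ via the backward functional), so $\id\weireducible h$. By monotonicity of $\jjumpop$ this lifts to $\jjump{\id}\weireducible\jjump{h}\weiequiv\TCN$, contradicting \thref{thm:j(id)not<=TCN}.

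The only delicate point is choosing the right ``floor problem'' for \thref{thm:co-total_char}: $\TCN$ itself fails to be Baire co-total (indeed $\TCN\weiequiv\totalization{\CNatural}$ but $\TCN\not\weireducible\CNatural$), so the theorem cannot be applied to $\TCN$ directly. Applying it instead to the Baire co-total problem $\CNatural$ lying strictly below $\TCN$ is exactly what is needed to push $h$ above $\id$ and reduce the question to the previously established fact that $\jjump{\id}$ is not Weihrauch reducible to $\TCN$.
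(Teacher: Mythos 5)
Your proof is correct, and none of its ingredients creates a circularity: the co-totality of $\CNatural$ is the externally cited fact from \cite{BGCompOfChoice19} already used for \thref{thm:CN_notin_ran(tj)}, co-totality implies Baire co-totality, $\CNatural\weireducible\TCN$ and \thref{thm:j(id)not<=TCN} are established just before this theorem, and \thref{thm:co-total_char} together with the monotonicity of $\jjumpop$ then does exactly what you say: from $\CNatural\weireducible\TCN\weiequiv\jjump{h}$ you get $\CNatural\weireducible h$, hence $\id\weireducible h$ and $\jjump{\id}\weireducible\jjump{h}\weiequiv\TCN$, a contradiction. This is, however, a genuinely different route from the paper's. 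The paper does not invoke co-totality here: it first shows $\TCN\strictlyweireducible\jjump{\CNatural}$ (the reduction via \thref{thm:jump=max_tot}, strictness again via \thref{thm:j(id)not<=TCN}), uses \thref{thm:jump_endomorphism} to conclude that $\jjump{f}\weireducible\TCN$ forces $f\strictlyweireducible\CNatural$, and then proves by a hands-on continuity argument---pinning down the value $\Psi(0^k,\sigma)(0)=m$ of the backward functional on the input $0^\mathbb{N}$, feeding in inputs extending $0^k\concat\str{m+1}$ so that the ``otherwise'' case of $\jjump{f}$ is ruled out, and invoking the fractality of $\CNatural$---that $\TCN\weireducible\jjump{f}$ forces $\CNatural\weireducible f$, which contradicts the first half. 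Your argument is shorter and leverages the co-totality machinery of Section~3; in fact it proves the slightly more general statement that no total degree above $\CNatural$ but not above $\jjump{\id}$ can be a tot-jump. What the paper's longer argument buys is independence from the cited co-totality of $\CNatural$ (it only needs the fractal property) and the extra, separately interesting facts that $\TCN\strictlyweireducible\jjump{\CNatural}$ and that any reduction $\TCN\weireducible\jjump{f}$ already yields $\CNatural\weireducible f$.
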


\begin{proof}
Observe first of all that $\TCN\strictlyweireducible \jjump{\CNatural}$.
Indeed, the reduction follows by \thref{thm:jump=max_tot}, while the fact
that the reduction is strict follows from the fact that
$\jjump{\id}\not\weireducible \TCN$ (\thref{thm:j(id)not<=TCN}) whereas
$\jjump{\id}\weireducible \jjump{\CNatural}$ (by the monotonicity of the
tot-jump).

This also implies that if $\jjump{f}\weireducible \TCN$ then
$f\strictlyweireducible \CNatural$. To conclude the proof, it is enough to
show that if $\TCN \weireducible \jjump{f}$ then $\CNatural \weireducible f$. 	

Assume now that the reduction $\TCN\weireducible \jjump{f}$ is witnessed by
the functionals $\Phi,\Psi$. Consider the input $0^\mathbb{N}$ for~$\TCN$. By continuity, there are $k\in\mathbb{N}$ and
$\sigma\in \mathbb{N}^k$ such that, for some $m\in\mathbb{N}$,
$\Psi(0^k,\sigma)(0)\downarrow = m$. 	Let $q\in \Baire$ be an input
for~$\CNatural$ such that $0^k\concat\str{m+1} \pprefix
q$.
Let also $\Phi(q) = (e,i,x)$, which is an input of $\jjump{f}$. By
definition of Weihrauch reduction, $\Phi_e(x)\in \dom(f)$ and, for every
$y\in f(\Phi_e(x))$, $\Phi_i(x,y)\downarrow \in \jjump{f}(\Phi(q))$ (as
otherwise~$\sigma$ would be the initial segment of a solution for
$\jjump{f}(\Phi(q))$, hence~$\Psi$ would not produce a valid answer for
$\TCN(q)$). This implies that
\[
\{ \Psi(q,\Phi_i(x,y)) \st y\in f(\Phi_e(x)) \} \subseteq
    \TCN(q) = \CNatural(q),
\]
where $(e,i,x)$ are uniformly computable from~$q$. Since~$\CNatural$ is a
fractal (by the same proof as \thref{thm:TCN_fractal} or see \cite[Fact 3.2(1)]{BolWei11}),
$\CNatural$ is Weihrauch equivalent to its restriction to $\{ p \in \Baire
\st 0^k\concat\str{m+1} \pprefix p \}$.

In other words, the reduction $\TCN \weireducible \jjump{f}$ yields a
reduction $\CNatural \weireducible f$, which concludes the proof.
\end{proof}

Notice that the above argument does not necessarily yield a reduction
$\TCN\weireducible f$, as $\jjump{f}$ is still allowed to go in the
``otherwise'' case when every $n>0$ is in $\ran(q)$.

\begin{remark}
Notice that~$\TCN$ is another witness for \thref{thm:j_not_onto_total_join-irred}. Indeed, it is total (trivially), a fractal (by
\thref{thm:TCN_fractal}), and not in the range of $\jjumpop$ (by
\thref{thm:tcn_notin_ran(j)}). Every fractal is join-irreducible (see \cite[Prop.\ 4.11]{BGP17}), so all that is left is to show that~$\TCN$ is not Baire co-total. If it were, then $\TCN \weireducible \TCN$ would imply that $\TCN \weireducible \CNatural$, but we have already noted that $\CNatural\strictlyweireducible \TCN$.
\end{remark}

\begin{theorem}
$\jjump{\CNatural} \weiequiv \totalization{(\CNatural \times \W)}$.
\end{theorem}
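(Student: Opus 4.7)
The plan is to prove the two reductions separately. The direction $\totalization{(\CNatural\times\W)}\weireducible\jjump{\CNatural}$ is immediate from \thref{thm:jump=max_tot}(i) together with $\CNatural\times\W\weireducible\CNatural$, which holds because $\W$ is uniformly computable (so $\W\weireducible\id$).

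For $\jjump{\CNatural}\weireducible\totalization{(\CNatural\times\W)}$, I would build the reduction directly from the definition. On input $(e,i,p)$, the forward functional outputs $(u,v)$ where $u$ is the total padding of $\Phi_e(p)$, namely $u(\langle k,s\rangle):=\Phi_e(p)(k)$ if this converges in at most $s$ stages and $u(\langle k,s\rangle):=0$ otherwise, so that $\ran(u)\subseteq\{0\}\cup\ran(\Phi_e(p))$ and hence $\CNatural(u)\subseteq\CNatural\Phi_e(p)$ whenever $\Phi_e(p)$ is total. The component $v$ encodes the $\Pi^0_2$-condition
\[
\forall n\bigl[\,n+1\in\ran(\Phi_e(p)) \lor \Phi_i(p,n)\downarrow\,\bigr]
\]
via a standard dovetailing that enumerates the pairs $(n,j)$ using the canonical pairing $\langle n,j\rangle$ and sets $v(t)=0$ exactly at the stages at which the next unfinished pair $(n,j)$ becomes witnessed, meaning either $\Phi_i(p,n)(j)\downarrow$ in at most $t$ stages or $\Phi_e(p)(k)=n+1$ for some $k\le t$. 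This design guarantees (i) $v\in\dom(\W)$ iff the displayed condition holds, which is implied by $(e,i,p)\in\dom(\jjump{\CNatural})$, and (ii) for any $w\in\W(v)$ the value $w(\langle n,j\rangle)$ is precisely the stage at which pair $(n,j)$ was witnessed.

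The backward functional, given $(n,w)\in\totalization{(\CNatural\times\W)}(u,v)$, outputs the sequence $y$ defined coordinatewise by setting $s_j:=w(\langle n,j\rangle)$ (always defined since $w$ is total) and letting $y(j):=\Phi_i(p,n)(j)$ when this converges within $s_j$ stages and $n+1\notin\ran(\Phi_e(p)[s_j])$, and $y(j):=0$ otherwise. In the good case $(e,i,p)\in\dom(\jjump{\CNatural})$, both $u\in\dom(\CNatural)$ and $v\in\dom(\W)$ hold and $n\in\CNatural\Phi_e(p)$; by property~(ii) the stage $s_j$ witnesses pair $(n,j)$, and because $n+1\notin\ran(\Phi_e(p))$ the witness must come from $\Phi_i(p,n)(j)\downarrow$, so $y=\Phi_i(p,n)\in\jjump{\CNatural}(e,i,p)$. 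In every other case $\jjump{\CNatural}(e,i,p)=\Baire$, and the procedure still produces a total $y\in\Baire$ in finite time per coordinate.

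The main technical subtlety is ensuring that the backward functional is genuinely total in the "otherwise" case, where $n$ and $w$ are arbitrary elements of $\Baire$ rather than genuine $\CNatural$- and $\W$-solutions. Using $w$ itself as the computable deadline $s_j=w(\langle n,j\rangle)$ side-steps this: each coordinate is computed in finitely many stages regardless of any structural assumption on $w$, yet in the good case the deadline is precisely the certified witness stage from the dovetailing and is therefore long enough to reveal the witness we need.
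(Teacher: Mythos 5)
Your proposal is correct and follows essentially the same route as the paper: one direction via \thref{thm:jump=max_tot} and the uniform computability of $\W$, the other by padding $\Phi_e(p)$ to a total $\CNatural$-instance with the same solutions and encoding the $\Pi^0_2$-condition $(\forall n)[\,n+1\in\ran(\Phi_e(p))\lor \Phi_i(p,n)\downarrow\,]$ into a $\W$-instance, your backward functional merely using the $\W$-answer as explicit per-coordinate stage bounds where the paper instead monitors the c.e.\ failure events and pads the output with $0^\mathbb{N}$. Two wording slips are harmless: the needed containment $\CNatural(u)\subseteq\CNatural\Phi_e(p)$ follows from $\ran(u)\supseteq\ran(\Phi_e(p))$ (which your padding does guarantee when $\Phi_e(p)$ is total), not from the stated $\subseteq$-inclusion (that one is what puts $u$ in $\dom(\CNatural)$); and since a $\W$-solution may skip zeroes, $w(\coding{n,j})$ is only at least, not precisely, the stage at which pair $(n,j)$ is witnessed --- monotonicity of the witnessing conditions in the stage bound still makes your deadline argument go through.
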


\begin{proof}
In light of \thref{thm:jump=max_tot} and of the uniform computability of
$\W$, it suffices to show that $\jjump{\CNatural}\weireducible
\totalization{(\CNatural \times \W)}$. Given the input $(e,i,p)$ for
$\jjump{\CNatural}$, let $q\in\Baire$ be such that~$q$ has infinitely many
zeroes iff
\[
(\forall n)[\; (\forall j)(\Phi_e(p)(j)\neq n+1)\rightarrow
   \Phi_i(p,n)\downarrow \;].
\]
In other words,~$q$ has infinitely many zeroes iff $\Phi_i(p,n)\downarrow$
whenever~$n$ is not enumerated by $\Phi_e(p)$.
Moreover, let~$\bar e$ be such that $\Phi_{\bar e}(p)$ works by simulating
$\Phi_e(p)$ and padding the output with zeroes (this ensures that $\Phi_{\bar
e}$ is total and $\CNatural(\Phi_e(p))=\CNatural(\Phi_{\bar e}(p))$ whenever
$\Phi_e(p)\downarrow$).

The forward functional of the reduction $\jjump{\CNatural}\weireducible
\totalization{(\CNatural \times \W)}$ is then given by the map
$(e,i,p)\mapsto (\Phi_{\bar e}(p), q)$.

Given $\pairing{n,t}\in \totalization{(\CNatural \times \W)}(\Phi_{\bar
e}(p), q)$, we uniformly compute a solution for $\jjump{\CNatural}(e,i,p)$ as
follows: We run $\Phi_i(p,n)$ until we either see that $n+1\in
\ran(\Phi_e(p))$ or we see that $t\notin \W(q)$ (both conditions are c.e.).
If this never happens, we know that~$n$ is a valid solution for
$\CNatural(\Phi_{\bar e}(p))$ and $\Phi_i(p,n)\downarrow$. Otherwise, it
means that $\jjump{\CNatural}(e,i,p)=\Baire$, hence we can just continue the
output with~$0^\mathbb{N}$.
\end{proof}

Unlike~$\CNatural$, the fact that~$\Cantor$ is computably compact implies
that $\CCantor\weiequiv \TCCantor$ (see, e.g., \cite[Prop.\
6.1]{BGCompOfChoice19}). However, a characterization similar to the one for
the tot-jump of~$\CNatural$ holds for~$\CCantor$.

\begin{theorem}
	$\jjump{\CCantor} \weiequiv \totalization{(\CCantor\times \W)}$.
\end{theorem}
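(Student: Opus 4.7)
The plan is to adapt the previous proof for $\jjump{\CNatural}$, producing a total computable surrogate tree $\bar T$ for $T:=\Phi_e(p)$ to play the role of the zero-padded $\Phi_{\bar e}(p)$. The direction $\totalization{(\CCantor\times\W)}\weireducible\jjump{\CCantor}$ is immediate from \thref{thm:jump=max_tot} combined with $\CCantor\times\W\weireducible\CCantor$ (by uniform computability of $\W$), so I focus on the nontrivial direction $\jjump{\CCantor}\weireducible\totalization{(\CCantor\times\W)}$.

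Given $(e,i,p)$, I would define $\bar T\subseteq\cantor$ by setting $\chi_{\bar T}(\sigma)=0$ exactly when there exists $\tau\prefix\sigma$ such that $\Phi_e(p)(\tau)$ halts within $\length{\sigma}$ steps with value $0$. This makes $\chi_{\bar T}$ total and computable uniformly in $(e,i,p)$, and the tree property holds because shrinking $\sigma$ to $\tau'\prefix\sigma$ both restricts the set of prefixes to inspect and shortens the allotted time, the latter only making the requirement ``$\Phi_e(p)(\tau)$ does not halt-to-$0$ in time'' easier to satisfy. In parallel I would compute $\bar q\in\Baire$, uniformly in $(e,i,p)$, such that $\bar q$ has infinitely many zeros iff the $\Pi^0_2$-predicate
\[
\Phi_e(p)\in\dom(\CCantor)\ \land\ (\forall Y\in[\Phi_e(p)])\,\Phi_i(p,Y)\downarrow
\]
holds; compactness of $\Cantor$ makes the second conjunct $\Pi^0_2$. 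The forward functional is $(e,i,p)\mapsto(\bar T,\bar q)$; the backward functional, on input $\pairing{X,t}$, computes the bits of $\Phi_i(p,X)$ while monitoring the c.e.\ event ``$t\notin\W(\bar q)$'', aborting to $0^\mathbb{N}$ if it fires.

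The heart of the verification---and the step I expect to be the main obstacle---is the inclusion $[\bar T]\subseteq[T]$ whenever $\Phi_e(p)$ is total, which plays the role of $\CNatural(\Phi_{\bar e}(p))\subseteq\CNatural(\Phi_e(p))$ in the $\CNatural$-proof. Any extra node $\sigma\in\bar T\setminus T$ has $\Phi_e(p)(\sigma)=0$ in some number of steps $w$, so every extension of $\sigma$ of length at least $\max(\length{\sigma},w)$ is excluded from $\bar T$, meaning such $\sigma$ lies only on a finite $\bar T$-branch. Granting this, the rest is bookkeeping: in the good case, $\bar T\supseteq T$ is an infinite subtree, $\bar q\in\dom(\W)$, and the received $X$ lies in $[\bar T]\subseteq[T]$, so $\Phi_i(p,X)\downarrow\in\jjump{\CCantor}(e,i,p)$; in the otherwise case, $\jjump{\CCantor}(e,i,p)=\Baire$ and $\bar q\notin\dom(\W)$, so the monitor eventually fires and produces an eventually-zero output in $\Baire$.
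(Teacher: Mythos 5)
Your proposal is correct and takes essentially the same route as the paper's proof: a total surrogate tree (your $\bar T$ is exactly the paper's $S$, a computable tree whose paths are the $x$ with no prefix ever mapped to $0$ by $\Phi_e(p)$), a $\W$-input coding the $\Pi^0_2$ ``good case'' predicate, and the same monitor-and-abort backward functional. The only point treated more quickly than in the paper is the complexity claim: the paper justifies your compactness remark by noting that projections of $\Sigma^0_2$ sets along the computably compact space $\Cantor$ are $\Sigma^0_2$, and by using the first conjunct to replace the $\Sigma^0_1$ condition ``$\sigma\in\Phi_e(p)$'' with a bounded one--a refinement of, not a departure from, your argument.
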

\begin{proof}
	As above, it suffices to show that $\jjump{\CCantor} \weireducible \totalization{(\CCantor\times \W)}$. Let $(e,i,p)$ be an input for $\jjump{\CCantor}$. 
	Let $S\subseteq \cantor$ be a tree such that 
	\[ [S] = \{ x\in\Cantor \st (\forall n)[\;\Phi_e(p)(\coding{x[n]})\downarrow \rightarrow  \Phi_e(p)(\coding{x[n]})=1\;] \}. \]
	Observe that a tree $S$ as above can be uniformly computed from $e,p$ (as the formula defining $[S]$ is $\lightfacePi^0_1$ with parameters $e,p$) and that, if $\Phi_e(p)$ is the characteristic function of a subtree of $\cantor$, then $[S]=[\Phi_e(p)]$. Moreover, $S$ is well-defined, even if $\Phi_e(p)$ does not converge or it is not the characteristic function of a tree. To compute an input $q\in\Baire$ for $\W$, we first observe that the set 
	\begin{align*}
		A_{e,i,p}:=\{ y \in \Cantor \st  & \Phi_e(p)\uparrow  \\
			&\lor\; (\exists \tau\in 2^{<\omega})(\exists \sigma\prefix\tau)[\; \Phi_e(p)(\coding{\tau}) = 1 \land \Phi_e(p)(\coding{\sigma}) = 0 \;] \\
			& \lor\; [\; (\forall n)[\; \Phi_e(p)(\coding{y[n]})\downarrow \rightarrow  \Phi_e(p)(\coding{y[n]})=1 \;] \\ 
			&\quad\quad\quad\quad\land{} (\exists n)\; \Phi_i(p,y)(n)\uparrow \;]\}
	\end{align*}
	is defined by a $\lightfaceSigma^0_2$-formula with parameters $e,i,p$. Intuitively, the first two rows of the definition capture ``$\Phi_e(p)$ is not the characteristic function of a subtree of $\cantor$'', while the last two rows can be read as ``$y\in [\Phi_e(p)]$ and $\Phi_i(p,y)\uparrow$''. The third row could have been written as ``$(\forall n)(\Phi_e(p)(\coding{y[n]})=1)$''. This is (in general) a $\lightfacePi^0_2$ statement, but it can be equivalently rewritten in a $\lightfacePi^0_1$-way in light of the first row. 

	Since the projection of a $\lightfaceSigma^0_2$-set over a computably compact set is~$\lightfaceSigma^0_2$ (see, e.g., \cite[Lemma 3.9]{MVEffSalem}) and that checking if a subtree of $\cantor$ is ill-founded is a $\Pi^0_1$-problem, this implies that the statement 
	\[ \text{``}\Phi_e(p) \text{ is an ill-founded subtree of } \cantor \text{ and } \lnot(\exists y \in \Cantor)\; y \in A_{e,i,p}\text{.''}  \]
	is~$\Pi^0_2$. Therefore, we can uniformly compute a string $q\in\Baire$ such that $q$ has infinitely many zeroes iff the above formula holds.

	The forward functional of the reduction is the map that sends $(e,i,p)$ to $(S,q)$. The backward functional $\Psi$ is the map that works as follows: given $e,i,p$ and a solution $\pairing{z,t}$ for $\totalization{(\CCantor\times \W)}(S,q)$, $\Psi$ outputs $\Phi_i(p,z)$ as long as $t$ appears to belong to $\W(q)$. As soon as an error is found, $\Psi$ extends the partial output with $0^\mathbb{N}$. 

	It is immediate from the definition of $q$ that an error is found only if $\Phi_e(p)$ does not define an ill-founded subtree of $\cantor$ or if there is $y\in [\Phi_e(p)]$ such that $\Phi_i(p,y)\uparrow$. In this case, $\jjump{\CCantor}(e,i,p)=\Baire$, hence the computed string is a valid solution. On the other hand, if an error is never found, then $z\in [S]=[\Phi_e(p)]$ and $\Psi((e,i,p), \pairing{z,t})=\Phi_i(p,z)\in \jjump{\CCantor}(e,i,p)$, which concludes the proof.
\end{proof}

We conclude this section by showing that, as a consequence of
\thref{thm:jump=tot}, the tot-jump of each of the problems~$\mflim$, $\mflim^{[n]}$,
$\UCBaire$, and~$\CBaire$ is the respective total continuation.

\begin{theorem}
For every $n\ge 1$, $\jjump{\mflim^{[n]}} \weiequiv
\totalization{(\mflim^{[n]})}$.
\end{theorem}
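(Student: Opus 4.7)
The plan is to apply \thref{thm:jump=tot}: it suffices to exhibit total computable functionals $\Phi$ and $\Psi$ such that, whenever $\Phi_e(p)\in\dom(\mflim^{[n]})$ and $\Phi_i(p,q)\downarrow$ for every $q\in\mflim^{[n]}\Phi_e(p)$, we have $\Phi(e,i,p)\in\dom(\mflim^{[n]})$ and $\{\Psi((e,i,p),q):q\in\mflim^{[n]}\Phi(e,i,p)\}\subseteq\{\Phi_i(p,q):q\in\mflim^{[n]}\Phi_e(p)\}$. The guiding idea is to \emph{absorb} the backward functional into the forward: we will build $\Phi(e,i,p)$ as a totally defined instance of $\mflim^{[n]}$ whose iterated limit already equals $\Phi_i(p,q)$ for the (unique, once the nested limits are taken) $q$ coming from the original reduction, and then set $\Psi((e,i,p),q):=q$, which is trivially total.

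For $n=1$ I would construct a total convergent sequence $\sequence{r_m}{m}$ by staging. At stage $m$, simulate $\Phi_e(p)$ for $m$ steps and define $\ell_m\in\Baire$ by setting $\ell_m(k)$ equal to $\Phi_e(p)(\pairing{m',k})$ for the largest $m'\le m$ for which this value has been computed by stage $m$, and $0$ if no such $m'$ exists. Then simulate $\Phi_i(p,\ell_m)$ for $m$ steps and set $r_m(k):=\Phi_i(p,\ell_m)(k)$ when computed, $0$ otherwise. In the good case, totality of $\Phi_e(p)$ forces the largest computed $m'$ to tend to infinity with $m$, so convergence of the sequence coded by $\Phi_e(p)$ gives $\ell_m\to\ell:=\lim\Phi_e(p)$ pointwise; continuity of $\Phi_i$ then yields $r_m\to\Phi_i(p,\ell)$ pointwise. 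Hence $\pairing{r_0,r_1,\ldots}\in\dom(\mflim)$ with the correct limit, and the identity $\Psi$ fulfils the hypothesis of \thref{thm:jump=tot}.

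For $n>1$ I would iterate this through the compositional product structure $\mflim^{[n]}=\mflim\compproduct\mflim^{[n-1]}$. An instance is a pair $(w^*,x^*)$ with $x^*$ an instance of $\mflim^{[n-1]}$ and $\Gamma_{w^*}$ mapping every answer of $\mflim^{[n-1]}(x^*)$ to an instance of $\mflim$. By induction on $n$: the outermost $\mflim$-layer absorbs $\Phi_i$ exactly as in the $n=1$ case, so the backward functional becomes the identity; the inner $\mflim^{[n-1]}$-layer is totalized by the inductive hypothesis. Uniformity in $e,i$ is preserved because every transformation is a primitive-recursive manipulation of indices.

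The hard part is the technical bookkeeping of the staged construction, namely verifying that the time-padding never destroys the pointwise convergences $\ell_m\to\ell$ and $r_m\to\Phi_i(p,\ell)$, together with, in the inductive step, the check that the nested totalization genuinely produces valid instances at every layer of the compositional product in the good case. Both boil down to uniform continuity of computable functionals, and the verifications follow the single-step template.
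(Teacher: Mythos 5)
Your proposal is correct in outline and rests on the same two pillars as the paper's proof: the base case $n=1$ is handled by exactly the paper's construction (stage-wise simulate $\Phi_e(p)$, extract the current best guess $\ell_m$ of the limit, run $\Phi_i(p,\ell_m)$ for $m$ steps with padding, and use continuity of $\Phi_i$ to see that the constructed sequence converges to $\Phi_i(p,\lim\Phi_e(p))$ in the good case), and the reduction to this construction goes through \thref{thm:jump=tot}, noting that only the good case matters since in the bad case totality of the functionals suffices. Where you genuinely diverge is the inductive step: the paper isolates a class $\mathcal{P}$ of problems admitting such total witnesses with projection backward, proves $\mathcal{P}$ is closed under ordinary composition $f\circ g$, and then invokes $\mflim\circ\mflim\strongweiequiv\mflim\compproduct\mflim$ (using that $\mflim$ is a cylinder) to pass to the compositional-product representative; you instead unfold the representative $\mflim^{[n]}=\mflim\compproduct\mflim^{[n-1]}$ directly and totalize layer by layer. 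Your route avoids the transfer between $\mflim\circ\mflim$ and $\mflim\compproduct\mflim$ altogether, at the cost of handling the representative's bookkeeping yourself, and two points need care there. First, since an answer to $\mflim^{[n]}$ for $n\ge 2$ is a (nested) pair consisting of the outer limit together with the inner answer, the backward functional cannot literally be the identity: it must be the (total, computable) projection onto the outermost coordinate, which \thref{thm:jump=tot} of course allows. Second, the induction must be carried out on the strengthened statement (existence of uniform total $\Phi^{(n)},\Psi^{(n)}$ as in the hypothesis of \thref{thm:jump=tot}), because after totalizing the inner $\mflim^{[n-1]}$-instance its answers are no longer answers of the original inner instance; the functional $\Gamma_{\tilde w}$ you attach to the outer layer must first recover an original inner answer $w$ from the totalized inner answer (e.g., by arranging, inductively, that the outermost coordinate of the totalized answer is itself an original answer, so that a fixed projection does the translation), and only then stage the absorption of $\Phi_i(p,\pairing{\cdot,w})$ along approximations of $\lim\Gamma_{w^*}(w)$. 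With these adjustments—which fit under the ``technical bookkeeping'' you flag—your argument goes through and yields the same conclusion; the paper's composition-closure lemma buys a cleaner, reusable statement, while your direct unfolding stays closer to the chosen representative $\mflim^{[n]}$.
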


\begin{proof}
Let us first prove the theorem for $n=1$. Let~$\mathcal{P}$ be the family of
all problems~$f$ such that there is a total computable~$\Phi$ such that, for
every $e,i\in\mathbb{N}$ and every $p\in\Baire$, if $\Phi_e(p)\in\dom(f)$ and
$(\forall q\in f\Phi_e(p))\; \Phi_i(p,q)\downarrow$, then
\[
\Phi(e,i,p) \in \dom(f) \quad \text{and}\quad  f\Phi(e,i,p) \subseteq
    \{ \Phi_i(p,q) \st q \in f\Phi_e(p)\}.
\]
By \thref{thm:jump=tot} (with~$\Psi$ the projection on the second coordinate), for every $f\in \mathcal{P}$, $\jjump{f}\weiequiv
\totalization{f}$, so it is enough to show that $\mflim\in\mathcal{P}$.
Let~$\Phi$ be the map that, upon input $e,i,p$, computes the sequence
$\sequence{x_n}{n\in\mathbb{N}}$ defined as follows: We read the output of
$\Phi_e(p)$ as (an initial segment of) the join of countably many strings
$\pairing{q_0,q_1,\hdots}$. Formally, if~$\sigma_n$ is the string produced by
$\Phi_e(p)$ in~$n$ steps, we define $k_n:=\max \{ j \st \pairing{j,0}<
\length{\sigma_n} \}$ and, for every $j\le k_n$, define $\tau_{n,j}(m) :=
\sigma_n(\pairing{j,m})$ for all~$m$ with $\pairing{j,m}<\length{\sigma_n}$.
With this definition, for every $j\le k_n$, $\tau_{n,j} \prefix q_j$. We can uniformly compute the string $y\in\Baire$ defined as $y_n(m) := \tau_{n,j}(m)$ for the largest $j$ such that $\pairing{j,m}<\length{\sigma_n}$, and $y_n(m):=0$ if no such $j$ exists. Observe that, if $\sequence{q_n}{n\in\mathbb{N}}$ converges, then so does $\sequence{y_n}{n\in\mathbb{N}}$ and $\lim_{n\to\infty} y_n = \lim_{n\to \infty} q_n$. We define

\[
x_n := \Phi_{i,n}(p,y_n )\concat
0^\mathbb{N},
\]
where $\Phi_{i,n}(p,t)$ denotes the output produced by $\Phi_i(p,t)$ in $n$
steps. Clearly, the map~$\Phi$ is total and computable.

Assume that $\Phi_e(p)$ produces the sequence
$\sequence{q_j}{j\in\mathbb{N}}\in\dom(\mflim)$ and that $\Phi_i(p,q )\downarrow$, where $q:=\lim_{n\to\infty} q_n$. By the continuity of~$\Phi_i$, we immediately obtain
\[
\lim_{n\to\infty} x_n = \Phi_i\left(p,\lim_{n\to\infty} y_n\right) = \Phi_i\left(p,\lim_{n\to\infty} q_n\right) = \Phi_i(p,\mflim\ \Phi_e(p)),
\]	
which shows that $\mflim\in\mathcal{P}$.

The general case follows by induction on~$n$, as the class~$\mathcal{P}$ is
closed under composition and $\mflim \circ \mflim \strongweiequiv \mflim
\compproduct \mflim$ (by \cite[Example 4.4(1)]{BolWei11} and the fact that $\mflim$ is a cylinder).
To prove that~$\mathcal{P}$ is closed under
composition, we can use $f\in \mathcal{P}$ first and $g\in \mathcal{P}$
later, to conclude that $fg\in \mathcal{P}$.

More precisely, let $f,g\in \mathcal{P}$ and let $F$ and $G$ be the total computable witnesses. Fix $e,i\in\mathbb{N}$ and
$p\in\Baire$ such that $\Phi_e(p)\in\dom(fg)$ (i.e.\ $\Phi_e(p) \in \dom(g)$ and $g(\Phi_e(p)) \subseteq \dom(f)$) and $(\forall q\in f g
\Phi_e(p))\; \Phi_i(p,q)\downarrow$. Let $\bar e,\bar \imath\in\mathbb{N}$ be
such that $\Phi_{\bar e}(\pairing{a,b})=b$ and $\Phi_{\bar
\imath}(\pairing{a,b},c) = \Phi_i(a,c)$. Observe that
\[
\{ \Phi_i(p,q) \st q \in f g\, \Phi_e(p) \}= \bigcup_{t\in g \Phi_e(p)}
 \{ \Phi_{\bar \imath}(\pairing{p,t},q) \st
 q \in f \Phi_{\bar e}(\pairing{p,t}) \}.
\]
By the choice of~$F$,
for every $t\in g\,\Phi_e(p) \subseteq \dom(f)$ (which implies that $\Phi_{\bar
e}(\pairing{p,t}) \in \dom(f)$ and $(\forall q\in f\Phi_{\bar
e}(\pairing{p,t}))\;\Phi_{\bar \imath}(\pairing{p,t},q)\downarrow$),
\begin{align*}
\emptyset \neq fF( \bar e, \bar \imath, \pairing{p,t}) &\subseteq
   \{ \Phi_{\bar \imath}( \pairing{p,t},q) \st q \in f
   \Phi_{\bar e}(\pairing{p,t}) \} \\
   &= \{ \Phi_i(p,q) \st q \in f(t) \} .
\end{align*}
In other words, letting~$j$ be such that $\Phi_j(p,t):= F(\bar e,\bar \imath,
\pairing{p,t})$, we obtain
\[
f( \{ \Phi_j(p,t) \st t \in g \Phi_e(p) \}) \subseteq
    \{ \Phi_i(p,q) \st q \in f g \Phi_e(p) \}.
\]
To conclude the proof, note that by the choice of~$G$, we have
\[
\emptyset \neq g\, G(e,j,p) \subseteq \{ \Phi_j(p,t) \st t \in g\, \Phi_e(p)
    \}.
\]
In particular, the map $(e,i,p)\mapsto G(e,j,p)$ witnesses the fact that
$f\circ g \in \mathcal{P}$.
\end{proof}

	

\begin{theorem}
$\jjump{\UCBaire} \weiequiv \totalization{\UCBaire}$ and $\jjump\CBaire
\weiequiv \TCBaire$.
\end{theorem}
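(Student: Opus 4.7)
The strategy is to apply \thref{thm:jump=tot} to both $\CBaire$ and $\UCBaire$ via a single common construction. Uniformly in $(e,i,p)$ we build a subtree $T = T(e,i,p) \subseteq \baire$ whose infinite paths encode pairs $(q, \Phi_i(p,q))$ with $q \in [\Phi_e(p)]$, thereby yielding the total forward and backward functionals required by \thref{thm:jump=tot}.

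Fix a computable bijection identifying each $\sigma \in \baire$ with a pair $(\sigma_0, \sigma_1)$ of strings of the same length. Declare $\sigma \in T$ iff, after running $\length{\sigma}$ stages of computation, both of the following hold: (a) $\Phi_e(p)$ has not yet output $0$ on any of $\coding{\sigma_0 \restrict k}$ for $k \le \length{\sigma}$; and (b) $\Phi_i(p, \sigma_0)$ has not yet produced an output bit that disagrees with the corresponding bit of $\sigma_1$. Both conditions are decidable from $(e,i,p)$ and persistent, so $T$ is a uniformly computable subtree of $\baire$, closed under initial segments. Set $\Phi(e,i,p) := T(e,i,p)$ and $\Psi((e,i,p), \sigma) := \sigma_1$; both are manifestly total and computable.

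To verify the hypothesis of \thref{thm:jump=tot}, suppose $\Phi_e(p)$ is the characteristic function of a subtree $T_0$ of $\baire$ and $\Phi_i(p, q) \downarrow$ for every $q \in [T_0]$. For each such $q$, the pair $(q, \Phi_i(p, q))$ satisfies (a) and (b) at every finite stage, so it codes a branch of $T$; hence $T$ is ill-founded whenever $T_0$ is, and moreover $T$ has the unique path coding $(q, \Phi_i(p, q))$ when $T_0$ has the unique path $q$. Conversely, any $\sigma \in [T]$ decodes as $(\sigma_0, \sigma_1)$ with $\sigma_0 \in [T_0]$ (clause (a) in the limit forces $\Phi_e(p)(\coding{\sigma_0 \restrict k}) = 1$ for every $k$) and with $\sigma_1 = \Phi_i(p, \sigma_0)$ (clause (b) together with totality of $\Phi_i(p, \sigma_0)$ forces full agreement). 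Thus $\Psi((e,i,p), \sigma) = \Phi_i(p, \sigma_0)$ lies in $\jjump{\CBaire}(e,i,p)$ (resp.\ $\jjump{\UCBaire}(e,i,p)$), as required. Recalling $\totalization{\CBaire} \weiequiv \TCBaire$ then gives both equivalences.

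The only delicate point is the passage from ``no witness of $\Phi_e(p)(\coding{\sigma_0 \restrict k}) = 0$ ever appears'' to the limit assertion ``$\sigma_0 \in [T_0]$''; this is legitimate precisely because we are in the case where $\Phi_e(p)$ genuinely converges to the characteristic function of a tree, and outside that case the ``otherwise'' clause of \thref{thm:jump=tot} imposes no constraints on $\Phi$ or $\Psi$. That the same construction handles both $\CBaire$ and $\UCBaire$ reflects the fact that uniqueness of a path in $T_0$ is inherited by $T$ as soon as $\Phi_i(p, q)$ is total on the unique $q$.
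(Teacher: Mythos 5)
Your proof is correct and takes essentially the same approach as the paper: both apply \thref{thm:jump=tot}, with the forward functional producing a uniformly computable tree whose infinite paths code pairs $\pairing{x,y}$ where $x$ is a path through $\Phi_e(p)$ and $y$ is consistent with $\Phi_i(p,x)$, and with the backward functional the projection onto the second coordinate, so that uniqueness of paths is inherited (handling $\UCBaire$) and ill-foundedness is preserved (handling $\CBaire$). Your stage-by-stage definition of $T$ is just an explicit rendering of the paper's $\Pi^0_1$ description of $[T]$.
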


\begin{proof}
We only show that $\jjump\CBaire \weiequiv \TCBaire$, as (with minor
modifications) the same proof works for~$\UCBaire$.

As in the previous proof, we show that~$\CBaire$ satisfies the assumptions
of \thref{thm:jump=tot}, namely, there are two total computable
functionals~$\Phi$ and~$\Psi$ such that, for every $e,i\in\mathbb{N}$ and
every $p\in\Baire$, if  $\Phi_e(p)\in\dom(\CBaire)$ and $(\forall x\in
[\Phi_e(p)])\; \Phi_i(p,x)\downarrow$, then $\Phi(e,i,p)\in\dom(\CBaire)$ and
$\{ \Psi((e,i,p),x) \st x\in [\Phi(e,i,p)] \} \subseteq \{ \Phi_i(p,x) \st x \in
[\Phi_e(p)]\}$. Let~$\Phi$ be the map that sends $(e,i,p)$ to (the
characteristic function of) a tree $T\subseteq \baire$ such that
\begin{align*}
   [T]= \{ \pairing{x,y}\in\Baire \st & (\forall n)(\Phi_e(p)(\coding{x[n]})\downarrow\rightarrow \Phi_e(p)(\coding{x[n]})=1) \text{ and } \\
      &  (\forall n)(\Phi_i(p,x)(n)\downarrow \rightarrow  \Phi_i(p,x)(n)= y(n)) \}. 
\end{align*}
As the set above is uniformly~$\Pi^0_1$ in $(e,i,p)$, a tree~$T$ as above can
be uniformly computed from $(e,i,p)$. Let $\Psi:= ((e,i,p),
\pairing{x,y})\mapsto y$.

To conclude the proof, notice that if $\Phi_e(p)\in\dom(\CBaire)$ and
$\Phi_i(p,x)\downarrow$ for every $x\in [\Phi_e(p)]$, then~$T$ is ill-founded and, for every $\pairing{x,y}\in [T]$, $\Psi((e,i,p), \pairing{x,y}) = y \in \{
\Phi_i(p,q) \st q \in [\Phi_e(p)] \}$.
\end{proof}

\section{Remarks on abstract jump operators}\label{sec:conclusions}

In this paper, we introduce and study a natural jump operator on the Weihrauch lattice, a natural partial order. The remarks in this section address a much more abstract question: under what conditions do arbitrary partial orders admit a jump operator in the sense of \thref{def:jump}? We show that, without additional structure, admitting a jump is not a first-order property.

We mentioned in the introduction that the existence of an abstract jump
operator is easy to see in any upper semilattice without maximum (using the
Axiom of Choice). This result can be extended to countable upper directed
partial orders (i.e., partial orders such that any finite number of elements
have a common upper bound) without maximum. Indeed, any such partial order
$(P,\le)$ admits a strictly increasing cofinal chain
$\sequence{q_n}{n\in\mathbb{N}}$. This can be easily shown by letting
$\sequence{p_n}{n\in\mathbb{N}}$ be an enumeration of~$P$ and defining
$q_0:=p_0$ and $q_{n+1}:= p_m$ where~$m$ is least such that $q_n<p_m$. The
existence of a jump operator follows from the fact that every partial order
with a strictly increasing cofinal chain admits a jump operator (it is enough
to map every element of the poset to the first element in the sequence that
is strictly above it).

We mention that the same strategy cannot be used to obtain the existence of a
jump operator on the Weihrauch degrees. Indeed, the third and fifth author will show in an upcoming paper that no chain in the Weihrauch degrees can be cofinal.

\begin{remark}
There is an upper directed partial order with no maximum and size $\aleph_1$
that does not admit a jump operator, which implies that the above observation
cannot be generalized to larger partial orders. To show this, let $(Q,
\le_Q)$ be~$\omega_1$ ordered as an antichain (each element is only
comparable with itself). Let also $(R,\le_R)$ be the partial order of all
non-empty finite subsets of~$\omega_1$ ordered by inclusion. Let~$P$ be the
union of~$Q$ and~$R$, where~$\le_P$ is defined as the transitive closure of
\[
\le_Q \cup \le_R \cup~ \{ (\alpha, \{\gamma \}) \st \alpha \le \gamma
   \}.
\]

Assume towards a contradiction that~$P$ admits a jump operator~$j$. Observe
now that, for a fixed~$\alpha$, $j(\alpha)$ is a non-empty finite set of
ordinals such that at least one is $\ge \alpha$. Let $\beta \in j(\alpha)$ be
such that $\alpha \le \beta$. In particular, for every $\gamma \ge \alpha$,
we have $\alpha <_P \{ \gamma \}$ and hence $j(\alpha) \le_P j(\{ \gamma
\})$, i.e., $\beta \in j(\alpha) \subseteq j(\{\gamma\})$.

Define an increasing sequence $\sequence{\alpha_n}{n\in\mathbb{N}}$ of
countable ordinals as follows: $\alpha_0:=0$ and, for every~$n$,
$\alpha_{n+1}:=\max j(\alpha_n) +1$. For every~$n$, let $\beta_n\in
j(\alpha_n)$ be such that $\alpha_n \le \beta_n$. In particular, we obtain
\[
\alpha_0 \le \beta_0 < \alpha_1 \le \beta_1 \hdots,
\]
which implies that all the~$\beta_n$ are distinct.

By the above observation, for every $\gamma\ge \sup_{n\in\mathbb{N}}
\alpha_n$ and every~$n$, $\beta_n \in j(\{\gamma\})$, hence $j(\{\gamma\})$ is infinite, which is a
contradiction with $j(\{ \gamma \})\in P$.
\end{remark}

Finally, we consider the case of arbitrary countable partial orders (without a maximum). We show that the existence of a jump operator is as complicated as its naive definition suggests: it is $\Sigma^1_1$-complete.

\begin{theorem}
There is a computable map $F\function{\mathrm{LO}}{\mathrm{PO}_0}$ from the
family of countable linear orders to the family of countable partial orders
without maximum (where we represent a linear/partial order using its
characteristic function) such that
\[
L \text{ is ill-founded } \iff F(L) \text{ admits a jump operator.}
\]		
\end{theorem}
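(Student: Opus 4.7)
The plan is to exhibit a computable reduction of the $\Sigma^1_1$-complete problem of ill-foundedness of linear orders to the problem of admitting a jump, thereby establishing $\Sigma^1_1$-hardness (and hence completeness, since the latter is trivially $\Sigma^1_1$).

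The construction proceeds from the tree $T_L$ of finite strictly $L$-decreasing sequences, ordered by end-extension. Recall that $T_L$ has an infinite branch if and only if $L$ is ill-founded. One attempts to take $F(L)$ to be some partial-order modification of $T_L$: the underlying set is $T_L$ augmented with auxiliary ``marker'' elements encoding potential jump targets, and the ordering is designed so that monotonicity of a jump is compatible with the branching of $T_L$ only when $T_L$ carries an infinite branch. Throughout, we must verify that $F(L)$ has no maximum element, so that $F(L) \in \mathrm{PO}_0$, and that $F$ is uniformly computable in the characteristic function of $L$.

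For the forward direction, assume $L$ is ill-founded and fix a descending sequence $\ell_0 >_L \ell_1 >_L \cdots$, giving an infinite branch $B = \{\str{\ell_0,\ldots,\ell_{n-1}} : n \in \omega\}$ of $T_L$. The plan is to define $j\function{F(L)}{F(L)}$ so that every element of $F(L)$ is mapped onto $B$ (or its associated marker chain) at a sufficiently high level. Because all images of $j$ lie on a single branch, any two elements have comparable images, so monotonicity is automatic; strict increase follows from sending each element to a strictly higher level of~$B$.

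For the reverse direction, assume $L$ is well-founded, so every branch of $T_L$ is finite, and suppose for contradiction that $j$ is a jump on $F(L)$. Iterating $j$ from any base point gives a strictly increasing sequence $p_0 <_{F(L)} p_1 <_{F(L)} \cdots$. The idea is that, by the design of the markers, this sequence must eventually commit to a single branch of $T_L$, and by the pigeonhole principle applied to positions of strict descent in $L$, one extracts from the $p_n$'s an infinite $L$-decreasing sequence, contradicting well-foundedness of $L$.

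The main obstacle is choosing the auxiliary marker structure correctly. Naive candidates fail: taking $F(L) = T_L$ with the prefix order admits no jump regardless of $L$ (because of branching at the root), while taking the ``dominance'' order on $T_L$, or attaching a universal $\omega$-chain above each level, admits a jump under the condition ``$L$ has no minimum'' rather than ``$L$ is ill-founded''---these conditions differ on examples such as $L = \mathbb{Q} \cap [0,1]$, which is ill-founded but has a minimum. The correct construction must therefore tie each marker element to a specific branch of $T_L$, so that selecting a monotone jump is equivalent to committing to an infinite branch; simultaneously the markers must ensure that no element of $F(L)$ is maximal (otherwise $F(L) \in \mathrm{PO}_0$ fails or the jump cannot be defined there). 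Balancing these two requirements is the delicate part of the argument.
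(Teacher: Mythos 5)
There is a genuine gap: the theorem's entire content is the explicit construction of $F(L)$, and your proposal never produces it. You describe $F(L)$ only as ``some partial-order modification of $T_L$'' with unspecified ``marker'' elements, and in your final paragraph you concede that the naive candidates fail and that designing the markers is ``the delicate part of the argument'' --- which is precisely the part left undone. Without a concrete definition of $F(L)$, neither direction can be checked: the forward direction's claim that ``all images of $j$ lie on a single branch, [so] any two elements have comparable images, so monotonicity is automatic'' is not sound even in outline (monotonicity requires $p\le q \Rightarrow j(p)\le j(q)$, and mere comparability of images does not give the right orientation), and the reverse direction's appeal to ``the pigeonhole principle applied to positions of strict descent'' is not an argument one can evaluate against an unspecified order. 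Reductions from well-foundedness via the tree of $L$-decreasing sequences are a reasonable instinct, but here the reduction must convert the \emph{order-theoretic} property of admitting a jump into ill-foundedness, and that conversion is exactly what the missing gadget has to accomplish.

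For comparison, the paper's proof avoids trees altogether. It takes the gadget $X = 2\times\omega$, two incomparable $\omega$-chains, and sets $F(L) := 1 + \sum_{x\in L^*} X$ (a bottom element $\bot$ followed by one copy $X_x$ of $X$ for each $x$ in the reversed order $L^*$, blocks ordered as in $L^*$). If $L$ is ill-founded, an ascending sequence $x_0 <_{L^*} x_1 <_{L^*}\cdots$ lets one define a jump explicitly: elements sitting at or below the block of $x_k$ (but not above all $x_k$) are sent to $(0,0)_{x_{k+1}}$, and elements lying above every $x_k$ are sent to their successor in their own chain. Conversely, if $j$ is a jump, then $j(\bot)=(i_0,n_0)_{x_0}$ for some block $x_0$, and the ``twin'' $(1-i_0,n_0)_{x_0}$ on the other chain of that block must, by strict increase and monotonicity over $\bot$, be sent above two incomparable elements of $X_{x_0}$ --- which forces $j((1-i_0,n_0)_{x_0})$ into a strictly later block $X_{x_1}$; iterating yields an infinite ascending sequence in $L^*$, i.e., $L$ is ill-founded. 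The two incomparable chains inside each block are the key device your plan is missing: they are what forces any monotone, strictly increasing map to climb through an infinite descending sequence of $L$, and they simultaneously guarantee that $F(L)$ has no maximum.
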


\begin{proof}
To show this, let $(X, \le_X)$ be the partial order defined as $X := 2 \times
\omega$ and $(i,n) \le_X (j,m)$ iff $i=j$ and $n\le m$. Intuitively,~$X$
consists of two incomparable copies of~$\omega$. We define $F(L):= 1+\sum_{x
\in L^*} X$, where~$L^*$ is~$L$ with the order reversed. We order $F(L)$ as expected: in particular, every element of a given term is less than every element of the later terms. For the sake of
readability, we write $X_x:= \{ (i,n)_x \st i\in 2 \text{ and } n \in\omega
\}$ for the~$x$-th copy of~$X$ in $F(L)$. Let also~$\bot$ be the minimum
of~$F(L)$.

Assume that~$L$ is ill-founded and let $\sequence{x_k}{k\in\mathbb{N}}$ be an
infinite descending sequence (i.e., an infinite ascending sequence in~$L^*$).
We can define a jump function~$j$ on~$F(L)$ as follows:
\begin{itemize}
\item
$j(\bot) :=(0,0)_{x_0}$;
\item
for every $(i,n)_x$ such that $x <_{L^*} x_0$, let
$j((i,n)_x):=(0,0)_{x_0}$;
\item
for every $(i,n)_x$ such that $x_k \leq_{L^*} x <_{L^*} x_{k+1}$, let
$j((i,n)_x):=(0,0)_{x_{k+1}}$;
\item
for every $(i,n)_x$ such that $x_k <_{L^*} x$ for every~$k$, let
$j((i,n)_x):=(i,n+1)_x$.
\end{itemize}
It is immediate from the definition that~$j$ is strictly increasing. Proving
that~$j$ is weakly monotone is also easy. 	

Conversely, if $F(L)$ admits a jump operator~$j$, then we can define an
ascending sequence in~$L^*$ (i.e., a descending sequence in~$L$ witnessing
the fact that~$L$ is ill-founded) as follows: We let~$x_0$ be such that
$j(\bot) = (i_0,n_0)_{x_0}$ for some $i_0,n_0$. To define~$x_1$, observe
that, by the weak monotonicity of~$j$, we have $j((1-i_0,n_0)_{x_0}) \ge_{F(L)}
j(\bot) = (i_0,n_0)_{x_0}$. This, in combination with $j((1-i_0,n_0)_{x_0})
>_{F(L)} (1-i_0,n_0)_{x_0}$, implies that $j((1-i_0,n_0)_{x_0}) =
(i_1,n_1)_{x_1}$ for some $i_1,n_1$ with $x_1 >_{L^*} x_0$. We can
iterate this argument to obtain the desired strictly
increasing sequence in~$L^*$.		
\end{proof}

This shows that the set of partial orders without maximum admitting a jump
operator is a non-Borel $\Sigma^1_1$-subset of the set of countable partial
orders. In particular, the existence of a jump operator for countable partial
orders without maximum cannot be characterized by an arithmetic formula. It
would be interesting to obtain a similar result for non-countable
partial orders. This would require a detour into the realm of generalized
descriptive set theory, and it is possible that additional set-theoretic axioms
(for example, on the size of the continuum) would be needed.

\section{Open Questions}\label{sec:openQ}

We mentioned in Section~\ref{sec:tot-jump} that the tot-jump of~$f$ can be
defined via a $\Delta^1_2$-formula using~$f$ as a parameter. Moreover, we
showed that no jump operator on computational problems can be defined using a
$\Sigma^{1,f}_1$-formula (\thref{rem:no_sigma11_def}). This leaves a gap, and
therefore it is natural to ask the following question:

\begin{open}
Is there a $\Pi^{1,f}_1$-definition for the tot-jump? More generally, is
it possible to define a jump operator on the Weihrauch degrees using a
$\Pi^{1,f}_1$-formula?
\end{open}

Despite our efforts, we could not obtain a satisfactory characterization for $\ran(\jjumpop)$. 
A better
characterization is especially desirable in light of
\thref{thm:jump_endomorphism}, as that would give us a description of a
sublattice of the Weihrauch degrees which is isomorphic to the full
structure.

\begin{open}
Find a ``natural'' characterization for $\ran(\jjumpop)$. Is $\jjumpop$ definable in the Weihrauch degrees?
\end{open}

While the ``natural'' condition is of course vague and informal, a
satisfactory answer would allow us to promptly tell whether a given~$f$ is in
the range of the tot-jump. To this end, a powerful result is provided by
\thref{thm:dis_x_g}, and especially by \thref{thm:product_dis}. As proved,
closure under product with~$\DIS$ is a sufficient condition for Baire co-totality,
and we showed that a Baire co-total problem~$f$ can be Weihrauch-reducible to
$\jjump{g}$ only in the trivial case $f\weireducible g$. This raises the
following question:

\begin{open}
Does closure under product with~$\DIS$ characterize Baire co-totality?
\end{open}

We also showed that the range of~$\jjumpop$ is a (proper) subset of the
join-irreducible degrees (\thref{thm:total_and_join_irreducible} and \thref{thm:j_not_onto_total_join-irred}). This
allowed us to show that the tot-jump of~$f$ and~$g$ distributes over the join
of~$f$ and~$g$ only when~$f$ and~$g$ are comparable. On the other hand, we do
not know whether the same holds for the meet: While \thref{thm:tj-meet} shows
that, in general, the jump does not distribute over the meet, we do not know
whether this is always the case when~$f$ and~$g$ are not comparable.

\begin{open}
Are there~$f$ and~$g$ such that $f \weiincomparable g$ but $\jjump{f} \sqcap
\jjump{g} \weiequiv \jjump{f \sqcap g}$?
\end{open}

%
%

\bibliographystyle{mbibstyle}
\bibliography{bibliography}

\end{document}